\newcommand{\e}{\varepsilon}
\newcommand{\R}{{\mathbb{R}}}
\newcommand{\C}{{\mathbb{C}}}
\newcommand{\T}{\mathbb{T}}
\newcommand{\Z}{\mathbb{Z}}
\newcommand{\tmop}[1]{\ensuremath{\operatorname{#1}}}
\renewcommand{\Re}{\tmop{Re}}
\renewcommand{\Im}{\tmop{Im}}
\theoremstyle{plain}
\newtheorem{theorem}[equation]{Theorem}
\newtheorem{proposition}[equation]{Proposition}
\newtheorem{lemma}[equation]{Lemma}
\newtheorem{corollary}[equation]{Corollary}
\newtheorem{definition}[equation]{Definition}
\newtheorem{example}[equation]{Example}
\theoremstyle{remark}
\newtheorem{problem}[equation]{\bf Problem}
\newtheorem{remark}[equation]{Remark}
\numberwithin{equation}{section}
\newcommand{\U}{\mathbb{U}}
\newcommand{\B}{\mathbb{B}}
\newcommand{\Sp}{\mathbb{S}}
\newcommand{\D}{\mathbb{D}}
\newcommand{\Sb}{\mathbb{G}}
\title[Szeg\H{o} Projection]{Hardy spaces and Szeg\H{o} projection on quotient domains}
\author{Liwei Chen}
\address[Liwei Chen]{Syracuse University, Department of Mathematics, Syracuse, NY 13244}
\email{lchen125@syr.edu}
\author{Yuan Yuan}
\address[Yuan Yuan]{Syracuse University, Department of Mathematics, Syracuse, NY 13244}
\email{yyuan05@syr.edu}
\subjclass[2010]{Primary: 32A25, Secondary: 32A35, 32A07,42B20, 42B30}
\keywords{Szeg\H{o} projection, $H^p$-spaces, holomorphic covering, Thullen domains}
\begin{document}

\maketitle
\begin{abstract}
The Hardy spaces are defined on the quotient domain of a bounded complete Reinhardt domain by a finite subgroup of $U(n)$. The Szeg\H{o} projection on the quotient domain can be studied by lifting to the covering space. This setting builds on the solution of a boundary value problem for holomorphic functions. In particular, when the covering space is either the polydisc or the unit ball in $\C^n$, the boundary value problem can be solved. Applying this theory in $\C^2$, we further obtain sharp results on the $L^p$ regularity of the Szeg\H{o} projection on the symmetrized bidisc, generalized Thullen domains, and the minimal ball.
\end{abstract}

\section{Introduction}

The study of the Hardy spaces on the unit disc $\D$ is a classical subject dated back at least to Riesz \cite{R23}. The higher  dimensional  generalization on various domains, such as 
polydisc $\D^n$,  unit ball $\B^n$,  Siegel's half space $\U^n$, real Euclidean space $\R^n$, Heisenberg group $\mathbb{H}^n$, and even general bounded domains in $\C^n$ with $C^2$ smooth boundary, is also a natural question which attracts substantial attentions (see \cite{KV72,R69,R80,FS72,Ste93} and the references therein). Roughly speaking, the Hardy space $H^p$ on a domain can be identified with the closed subspace of the $L^p$-space on the boundary, consisting of the boundary limit of holomorphic functions on the domain. In particular, the orthogonal projection $S:L^2\to H^2$ is defined to be the Szeg\H{o} projection and it can be recognized as a singular integral on the boundary. Problems on the regularity of the Szeg\H{o} projection on different function spaces are of immense interests in complex analysis, harmonic analysis, and operator theory. On various bounded smooth pseudoconvex domains, the Sobolev regularity of the Szeg\H{o} projection is well-known (\cite{PS77,NRSW89,MS97,CD06}). One important progress in the direction regarding domains of less smooth boundary was recently made by Lanzani and Stein. They developed the theory of Hardy spaces with arbitrary positive measure with respect to the Lebesgue measure on bounded strongly pseudoconvex domains with minimal $C^2$ smooth boundary in \cite{LS16}, and proved the sharp $L^p$ regularity of the Szeg\H{o} projection in \cite{LS17}. It becomes more intriguing when the domain is not smooth and there have been quite some interesting works recently on the Hardy spaces and the Szeg\H{o} projection on non-smooth domains (\cite{LS04, PS08, MSZ13, MP17, GGLV21, HT22, GG23}).

\medskip

Among recent studies on non-smooth domains in $\C^n$, those with quotient structures attracts substantial interests. Let $X, \Omega$ be bounded domains  in $\mathbb{C}^n$ such that there exists a proper holomorphic   map $\Phi:X\to\Omega$. An application of the classical
Remmert proper mapping theorem shows that $\Phi$ is a branched cover of finite order (see for example \cite{Bel82}). More precisely, there exists an analytic subvariety $Z$ in $X$ such that $\Phi: X\setminus Z \rightarrow \Omega\setminus \Phi(Z)$ is a  covering with the finite group of deck transformations $G$. We simply call $\Omega$ the quotient domain of $X$. 
In particular, when $X$ is $G$-invariant, $\Omega$ is homeomorphic to $X/G$. 
The systematic study on quotient domains was firstly carried out for the Bergman projection in \cite{CKY20}. The motivating example in that article is the symmetrized polydisc $\mathbb{G}_n$, a quotient domain of the polydisc $\D^n$ by the permutation group $S_n$ of $n$ elements. While 
\cite{CKY20} only considers the quotient domains of the polydisc, the quotient domains of the unit ball $\B^n$ in $\C^n$ can be handled in the same manner. The readers may refer to \cite{BCEM22, DM23} for more study of the Bergman projection on quotient domains. 
A natural question is whether this setting can be carried over to study the Szeg\H{o} projection on the quotient domains. There are some recent studies: the authors in \cite{MSZ13} defined a notion of the Hardy spaces on the symmetrized polydisc $\mathbb{G}_n$ and then the $L^p$ regularity of the corresponding Szeg\H{o} projection was studied in \cite{HT22}; a notion of the Hardy spaces on the quotient domains of $\D^n$ was defined similarly to that in \cite{MSZ13} and the corresponding Szeg\H{o} projection was studied in \cite{GG23}. However, the measure on the boundary of the covering space $X$ does not seem to be compatible with the pullback of the natural surface measure in the setting of these recent studies. 

\medskip

In this article, the set up of the Hardy spaces is based on more appropriate surface measures. Let $(X, \Omega, \Phi, G)$ be the quadruple associated to the quotient domain $\Omega$ of $X$ as defined above. We further assume that $\Phi$ extends smoothly to the topological boundary $\partial X$ of $X$. Let $\Gamma_X$ be a submanifold of $\partial X$ such that $\Gamma_X$ is invariant under $G$. Denote $\Phi (\Gamma_X)$ by $\Gamma_\Omega$. The induced Lebesgue measures on $\Gamma_X, \Gamma_\Omega$ are denoted by $d\sigma, d\mu$ respectively. Write the pullback surface measure $\Phi^* \mu = w \sigma$, where $w$ is a nonnegative density function. 
Let $x=(x_1, \cdots, x_d)$ be the real parameters on $\Gamma_X$ and define 
\[
J_{\R}Z=\left(\begin{array}{cccc} \frac{\partial z_1}{\partial x_1} & \frac{\partial z_2}{\partial x_1} & \dots & \frac{\partial z_n}{\partial x_1} \\ \dots \\ \frac{\partial z_1}{\partial x_d} & \frac{\partial z_2}{\partial x_d} & \dots & \frac{\partial z_n}{\partial x_d} \end{array}\right)_{d\times n}.
\]
Let $J_{\C}\Phi$ be the complex Jacobian matrix of the covering map $\Phi$ and define
\begin{equation}\notag
A=J_{\R}Z\cdot J_{\C}\Phi.
\end{equation}
We show in Proposition \ref{den} that $$\Phi^*\mu = \sqrt{\det[\Re(A\overline{A^t})]} dx_1 \cdots dx_d.$$
This is the key difference in our set up of the surface measure from that of \cite{MSZ13, GG23}, where the authors directly use the volume measure $w = |\det J_{\C}\Phi|^2$. Note that  a simply connected planar domain $\Omega$ with a conformal map $\varphi: \D \rightarrow \Omega$ can be considered as a special case of a quotient domain in one dimension.   
In this classical setting,  the pullback of the arc length of $\partial\Omega$ is given by $\varphi^*\mu=|\varphi'|d\theta$ (see \cite{LS04} and the references therein). 
In this sense, our set up is a natural generalization of the one dimensional case. 

Two different types of domain $X$ can be considered here:  $X$ is either a bounded complete Reinhardt domain of $C^2$ boundary with $\Gamma_X=\partial X$ or the polydisc $\D^n$ with $\Gamma_X=\T^n$. To define the Hardy spaces on $\Omega$, the following boundary value problem for holomorphic functions on $X$ is considered.

\begin{problem}\label{CY}
Let $w$ be a nonnegative $G$-invariant function (see Definition \ref{funinvariant}) on $\Gamma_X$. Find the necessary and sufficient condition on $w$, such that there exists a non-zero $G$-invariant holomorphic function $g$ on $X$ with the non-tangential limit $g^*$ on $\Gamma_X$ satisfying $|g^*|^2=w$ almost everywhere on $\Gamma_X$.
\end{problem}

Note that on a simply connected planar domain $\Omega$ with a conformal map $\varphi: \D \rightarrow \Omega$ and $w=|\varphi'|$, one can simply take $g = (\varphi')^\frac{1}{2}$ on $\D$ as in \cite{LS04}. 
More generally, the Herglotz representation: 
\begin{equation}\label{Her}\notag
g(re^{i\theta}):=\exp\left[\frac{1}{2\pi}\int_0^{2\pi}\frac{e^{it}+re^{i\theta}}{e^{it}-re^{i\theta}}\log(w^{\frac{1}{2}}(t))\,dt\right]
\end{equation}
provides a solution to this problem on $X=\D$, assuming $\log w \in L^1(\mathbb{T})$. 
When $n >1$, Problem \ref{CY} for general domain $X$ remains open. However, we are able to provide a sufficient condition $\log w \in L^1(\Gamma_X)$ for Problem \ref{CY} on both $X=\D^n$ and $X=\B^n$ (Theorem \ref{normsquare} and Theorem \ref{normsquareb}).
The main ingredient is the Poisson integral of a singular measure on the boundary.  On $\D^n$, it is essentially the classical Fourier series (\cite{R69}); while on $\B^n$, we apply Rudin's variation of Aleksandrov's Modification Theorem (\cite{Ale82,R83}) in the solution of the famous existence problem of the inner function on $\mathbb{B}^n$. Indeed, a similar version of Problem \ref{CY} in the function algebra setting $H^{\infty}(\D^n)$ or $H^{\infty}(\B^n)$ has been studied in the literature (\cite{R69,R80,R83}). This demonstrates a connection between the setting of our Hardy space theory and the classical function algebra.

When the existence of $g$ in Problem \ref{CY} is assumed, $(X, \Omega, \Phi, G)$ is called an admissible quadruple (see Definition \ref{con}). In such a case, the Hardy space $H^p(\Omega)$ is defined to be the set of all holomorphic functions $f$ on $\Omega$ such that  \begin{equation}\notag
\|f\|^p_{p, \Omega}=\frac{1}{|G|} \sup_{0 < r < 1} \int_{\Gamma_X}  |(f\circ \Phi)(rz)|^p |g(rz)|^2 d\sigma(z) < \infty. 
\end{equation}
 By the classical Hardy space theory on $X$, the Hardy space $H^p(\Omega)$ can be identified with a closed subspace of $L^p(\Gamma_\Omega)$, consisting of the non-tangential limit $f^*$. With a suitable choice of $g$, this is compatible with the classical case when the boundary of $\Omega$ is sufficiently smooth.  The Szeg\H{o} projection $S_{\Omega}$ is then defined as the orthogonal projection $S_{\Omega}:L^2(\Gamma_\Omega) \to H^2(\Omega)$. The first main theorem in this article is the following Bell type  transformation formula for the Szeg\H{o} projection (cf. \cite{Bel82}).

\medskip

{\bf Theorem A.}
Let $K_X(z,\zeta)$ be the Szeg\H{o} kernel on $X$ with $(z,\zeta)\in X\times\Gamma_X$, and $f\in L^2(\Gamma_{\Omega})$. Then 
$$S_{\Omega}(f)(z)=\frac{1}{|G|} \Phi_*\left( \int_{\Gamma_X}\frac{K_X(z,\zeta)}{g(z)\overline{g^*(\zeta)}}f(\Phi(\zeta))w(\zeta)d\sigma(\zeta)\right).$$

To study the $L^p$ regularity of the Szeg\H{o} projection $S_{\Omega}$, we derive the relation between $S_{\Omega}$ and the Szeg\H{o} projection $S_{X}$ on $X$. One notes that while the Hardy space $H^p(\Omega)$ depends  on the choice of $g$, the $L^p$ regularity of the Szeg\H{o} projection actually does not.

\medskip 

{\bf Theorem B.}
For $1<p<\infty$, $S_{\Omega}:L^p(\Gamma_{\Omega})\to H^p(\Omega)$ is bounded if and only if $S_{X}:L^p(\Gamma_X,w^{1-\frac{p}{2}})\to H^p(X,w^{1-\frac{p}{2}})$ is bounded restricted to the subset of $G$-invariant functions.

\medskip

While the framework can be applied to study the $L^p$ regularity of the Szeg\H{o} projection on general quotient domains, to illustrate the idea, we consider some examples in $\C^2$: the symmetrized bidisc
\[
\Sb^2=\left\{(z_1+z_2, z_1z_2)\in \mathbb{C}^2 : (z_1, z_2) \in \D^2)\right\}
\]
and the generalized Thullen domains of type $(m,k)$ for $m,k\in\Z^+$
\[
\Omega_{m,k}=\left\{(\zeta_1,\zeta_2)\in\C^2: |\zeta_1|^{\frac{2}{m}}+|\zeta_2|^{\frac{2}{k}}<1\right\}.
\]
In view of Theorem B, the sufficient condition of the $L^p$ regularity of the Szeg\H{o} projection can be obtained by verifying the Muckenhoupt condition, which belongs to the classical theory of the singular integral operators. Here we apply the explicit formulations in \cite{MZ15, WW21}.  The sufficient conditions turn out to be necessary as well by carefully examining the endpoints of the intervals.

\medskip

{\bf Theorem C.} 
The Szeg\H{o} projection on the symmetrized bidisc $\Sb^2$ is $L^p$ bounded  if and only if $\frac{4}{3}<p<4$.

\medskip

{\bf Theorem D.} 
For $m\ge2$ or $k\ge2$, the Szeg\H{o} projection on $\Omega_{m,k}$ is $L^p$ bounded if and only if $p\in\left(\frac{2m+4}{m+4},\frac{2m+4}{m}\right)\cap\left(\frac{2k+4}{k+4},\frac{2k+4}{k}\right)$.

\medskip

 On the symmetrized bidisc, the $L^p$ regularity of the Szeg\H{o} projection is parallel to that of the Bergman projection obtained in \cite{CJY23, HW23}. However, on the Thullen domain $\Omega_{1,k}$
(\cite{Thu31,HP88}) with $k=2, 3, 4, \cdots$, 
the $L^p$ regularity of the Szeg\H{o} projection differs significantly from that of the Bergman projection. More precisely, the Bergman projection on $\Omega_{1,k}$ is $L^p$ bounded for $1<p<\infty$ and all $k\ge2$ (see \cite[\S 6.1]{HW20} and \cite{Huo18}), while  the Szeg\H{o} projection on $\Omega_{1,k}$ is $L^p$ bounded if and only if $p\in\left(\frac{2k+4}{k+4},\frac{2k+4}{k}\right)$ for $k\ge2$. Note that as $k\to\infty$, $\Omega_{1,k}$ will collapse to a unit disc $\D$ and the dimension degenerates to one. The $L^p$ regularity of the Szeg\H{o} projection detects the change of the geometry while the Bergman projection fails to do so. It is also worth pointing out that in dimension one, B\'{e}koll\'{e} showed that the Szeg\H{o} projection is strictly stronger than the Bergman projection in  the sense of  the $L^p$ regularity when the  domain is not  smooth enough (\cite{Bek86}). The analogue phenomenon in higher dimensions is yet to be discovered. The Thullen domains $\{\Omega_{1,k}\}$ provide a family of supportive examples with Lipschitz continuous boundary, on which the $p$ range  is a non-degenerate proper subset in $(1, \infty)$  for the Szeg\H{o} projection to be $L^p$ regular. 


\medskip

The article is organized as follows. In section \S\ref{Secweight}, we derive the change of variables formula of the induced Lebesgue measure on the boundary in the general setting.  In section \S\ref{generalsetup}, we aim to develop the general theory of the Hardy spaces and the Szeg\H{o} projection. For the admissible quadruple $(X, \Omega, \Phi, G)$ (see Definition \ref{con}), we derive a Bell type transformation formula for the Szeg\H{o} projection and further connect the Szeg\H{o} projection  $S_{\Omega}$  with the Szeg\H{o} projection $S_{X}$.  
 In section \S\ref{SecDn}, we apply the theory to  the polydisc $\D^n$ and obtain the sharp result on the $L^p$-regularity of the Szeg\H{o} projection on the symmetrized bidisc. In section \S\ref{SecBn}, we apply the theory to the ball $\B^n$ and obtain sharp results on the $L^p$ regularity of the Szeg\H{o} projection on generalized Thullen domains and the minimal ball in $\C^2$.


\section{The change of variables formula}\label{Secweight}


Let $\Omega$ be a domain in $\C^n$.  Assume that there exist a domain  $X\subset\C^n$ and a proper holomorphic   map
\[
\Phi:X\to\Omega,
\]
which extends smoothly to the topological boundary $\partial X$. It follows from the classical Remmert proper mapping theorem that $\Phi$ is a branched cover of finite order (\cite{Bel82}).

\begin{definition}
\label{partbdry}
Let $G$ be the finite group acting on $X$ such that $\Omega$ is homeomorphic to $X/G$. Let $\Gamma_X$ be a submanifold of the  topological  boundary $\partial X$ such that $\Gamma_X$ is invariant under $G$. $\Gamma_{\Omega}=\Phi(\Gamma_{X})$ is defined to be the induced boundary on $\Omega$.
\end{definition}

We give two examples which are our main concern in this paper.

\begin{example}
\begin{enumerate}
\item $X=\D^n$ is the polydisc, $\Gamma_{\D^n}=\T^n$ is the distinguished boundary and $G$ is a finite subgroup of $U(n)$.
Then $\Gamma_{\Omega}=\Phi(\T^n)$ is the induced distinguished boundary of $\Omega$.
\item $X=\B^n$ is the unit ball, $\Gamma_{\B^n}=\Sp^{2n-1}$ is  the geometric boundary, and $G$ is a finite subgroup of $U(n)$. Then $\Gamma_{\Omega}=\Phi(\Sp^{2n-1})$ is the induced boundary of $\Omega$.
\end{enumerate}
\end{example}

\begin{definition}\label{weight}
Let $\mu$ be a positive Borel measure which is absolutely continuous with respect to the induced Lebesgue measure on $\Gamma_{\Omega}$. 
Denote the induced Lebesgue measure on $\Gamma_X$ by $d\sigma$ and the cardinality of $G$ by  $|G|$.
If $w_{\mu}$ is a nonnegative function satisfying
\begin{equation}\label{change}
\frac{1}{|G|}\int_{\Phi^{-1}(E)\subset\Gamma_X}w_{\mu}d\sigma=\int_{E\subset\Gamma_{\Omega}}d\mu
\end{equation}
for any measurable set $E$, then $w_\mu$ is called the density of $\Phi^*\mu$,
the pullback of $\mu$, with respect to $\sigma$.
\end{definition}

To study the classical Hardy spaces on $\Omega$, the surface integrals over (the level sets of) the boundary $\Gamma_{\Omega}$ with respect to the Lebesgue measure are considered. We thus restrict ourself to the case of
\[
\mu=m_{\Gamma_{\Omega}}
\]
being the induced Lebesgue measure on $\Gamma_{\Omega}$, and the density function is simply denoted by
\[
w:=w_{m_{\Gamma_{\Omega}}}.
\]
Namely, we write $$\Phi^*m_{\Gamma_{\Omega}} = w \sigma.$$


The starting point to define the Hardy spaces is to calculate the density function of the pullback measure. In order to achieve that, we first state an elementary calculus lemma.

\begin{lemma}\label{cal}
Let $A: \mathbb{R}^n \rightarrow \mathbb{R}^m$ be a $n \times m$ matrix with $m \geq n$. Then the $n$-dimensional volume element of $A(\mathbb{R}^n)$ is 
$\sqrt{\det \left(A \cdot A^t\right)}$. In other words, $m_n(A(I))=\sqrt{\det \left(A \cdot A^t\right)}$, where $I$ is the unit cube in $\R^n$.
\end{lemma}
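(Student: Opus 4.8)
The plan is to recognize the statement as the classical Gram determinant formula for the volume of a parallelepiped. Because the paper adopts the row-vector convention (as in the definition of $J_{\R}Z$), the map $v \mapsto vA$ sends the unit cube $I \subset \R^n$ onto the parallelepiped in $\R^m$ spanned by the $n$ rows $a_1, \dots, a_n$ of $A$, and the product $A A^t$ is precisely the Gram matrix $\big(\langle a_i, a_j\rangle\big)_{1\le i,j\le n}$ of these rows. Thus the claim is that the $n$-dimensional volume of this parallelepiped equals the square root of the determinant of the Gram matrix of its edge vectors.

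First I would dispose of the degenerate case: if $a_1,\dots,a_n$ are linearly dependent, then both sides vanish, since the parallelepiped has $n$-volume zero and the Gram matrix $A A^t$ is singular. So assume the rows are independent. The key step is then to run Gram--Schmidt on the rows, which yields a factorization $A = L Q$, where $Q$ is an $n \times m$ matrix with orthonormal rows $q_1,\dots,q_n$ (so that $Q Q^t = I_n$) and $L$ is an $n\times n$ lower-triangular matrix with positive diagonal entries. Consequently $A A^t = L Q Q^t L^t = L L^t$, and therefore $\det(A A^t) = (\det L)^2$.

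Next I would compute the volume directly in this orthonormal frame. The rows $q_1,\dots,q_n$ form an orthonormal basis of the $n$-plane $V = \operatorname{span}(a_1,\dots,a_n)\subset\R^m$, and the identity $A = LQ$ says that the coordinates of $a_i$ in this basis are exactly the entries of the $i$-th row of $L$. Hence the parallelepiped $A(I)$, viewed inside $V$ with these Euclidean orthonormal coordinates, is spanned by the rows of the square matrix $L$, so $m_n(A(I)) = |\det L|$. Combining this with $\det(A A^t) = (\det L)^2$ gives $m_n(A(I)) = |\det L| = \sqrt{\det(A A^t)}$, as claimed.

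There is no serious obstacle here, as the result is standard; the only points requiring care are bookkeeping. One must fix the row-vector convention so that the image of the cube is correctly identified with the row-span parallelepiped, and one must justify that measuring $n$-volume in the orthonormal coordinates supplied by $Q$ (equivalently, composing with an element of $O(m)$ carrying $V$ onto $\R^n\times\{0\}$) leaves the $n$-dimensional Hausdorff measure invariant. An alternative, equally short route avoids Gram--Schmidt and invokes the Cauchy--Binet formula directly: $\det(A A^t) = \sum_{S} (\det A_S)^2$, the sum running over the $n$-element column subsets $S$, where $\det A_S$ is the signed $n$-volume of the orthogonal projection of $A(I)$ onto the corresponding coordinate $n$-plane; the Pythagorean relation for volumes then identifies the right-hand side with $m_n(A(I))^2$.
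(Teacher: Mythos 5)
Your proposal is correct. Note that the paper itself offers no proof of Lemma \ref{cal}: it is introduced with the phrase ``we first state an elementary calculus lemma'' and left as a standard fact, so there is no argument in the paper to compare against. Your Gram--Schmidt (i.e.\ $LQ$-factorization) argument is a complete and standard justification: you correctly identify the row-vector convention implicit in the paper's definition of $J_{\R}Z$, so that $A(I)$ is the parallelepiped spanned by the rows of $A$ and $AA^t$ is their Gram matrix; the reduction $\det(AA^t)=(\det L)^2$ together with the computation of the volume in the orthonormal frame given by the rows of $Q$ settles the nondegenerate case, and the degenerate case is handled separately. The two points you flag as needing care --- the invariance of $n$-dimensional Hausdorff measure under the orthogonal map carrying the row span onto $\R^n\times\{0\}$, and the bookkeeping of the row convention --- are exactly the right ones, and the Cauchy--Binet alternative you sketch is an equally valid route.
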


The change of variables formula in the real case is as follows.

\begin{proposition}
Let $U$ be an oriented smooth manifold and $\dim_{\R}(U)=d$ and $\varphi:U\to\R^k$ be a smooth map, with $V=\varphi(U)\subset\R^k$, where $k\ge d$.
Let $x=(x_1, \cdots, x_d)$ be a local coordinate in $U$ and 
denote the induced Lebesgue measure on $V$ by  $dm_V$. Then 
\begin{equation}
\label{pbjacobian}
\varphi^{*} dm_V=\sqrt{\det[(J_{\R}\varphi)(J_{\R}\varphi)^t]}dx_1\cdots dx_d,
\end{equation}
where $J_{\R}\varphi$ is the $d\times k$ real Jacobian matrix of $\varphi$. 
\end{proposition}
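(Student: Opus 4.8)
The plan is to reduce equation \eqref{pbjacobian} to the pointwise linear-algebra computation of Lemma \ref{cal}. Since the assertion is local in nature, I would fix a single coordinate chart with coordinates $x=(x_1,\ldots,x_d)$ on $U$ and argue near an arbitrary point $x_0$ at which $\varphi$ is an immersion; the complementary set where $J_{\R}\varphi$ drops rank carries no $d$-dimensional measure and contributes nothing to either side of \eqref{pbjacobian}.

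First I would recall that the induced Lebesgue measure $dm_V$ on $V=\varphi(U)\subset\R^k$ is exactly the Riemannian volume measure for the metric that $V$ inherits from the Euclidean metric on $\R^k$. Pulling this metric back by $\varphi$ produces a Riemannian metric $g$ on $U$ whose components in the chart are $g_{ij}=\langle\partial\varphi/\partial x_i,\,\partial\varphi/\partial x_j\rangle$. Reading the rows of the $d\times k$ Jacobian $J_{\R}\varphi$ as the velocity vectors $\partial\varphi/\partial x_i\in\R^k$, the Gram matrix of $g$ is precisely $g=(J_{\R}\varphi)(J_{\R}\varphi)^t$.

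The heart of the matter is then the standard fact that in local coordinates the Riemannian volume form is $\sqrt{\det g}\,dx_1\cdots dx_d$, which is the claimed identity. To make this concrete and to connect it with Lemma \ref{cal}, I would linearize at $x_0$: the differential $d\varphi_{x_0}$ is represented by the constant matrix $A:=J_{\R}\varphi(x_0)$, which sends the infinitesimal unit cube spanned by $\partial/\partial x_1,\ldots,\partial/\partial x_d$ to the parallelepiped spanned by the rows of $A$ in $\R^k$. By Lemma \ref{cal}, with $n=d$ and $m=k$, this parallelepiped has $d$-dimensional volume $\sqrt{\det(AA^t)}$, so the local density of $\varphi^{*}dm_V$ with respect to $dx_1\cdots dx_d$ equals $\sqrt{\det[(J_{\R}\varphi)(J_{\R}\varphi)^t]}$ at $x_0$. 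As $x_0$ is arbitrary, \eqref{pbjacobian} follows.

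I expect the only genuine subtlety to be the passage from this infinitesimal, linearized computation to the actual pullback of the measure, that is, justifying that the pointwise factor furnished by Lemma \ref{cal} really is the density of $\varphi^{*}dm_V$ and not merely its first-order approximation. This is settled either by invoking the area formula of geometric measure theory or, equivalently, by unwinding the definition of the Riemannian volume measure together with a partition-of-unity argument; either route upgrades the heuristic parallelepiped estimate to an honest equality of measures. The orientability hypothesis guarantees that the volume form is globally well defined, while the immersion assumption ensures $\det g>0$ so that the square root is a genuine positive density.
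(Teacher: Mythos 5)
Your argument is correct and is essentially the paper's own proof: the paper likewise evaluates $\varphi^{*}dm_V$ on the coordinate multivector $\partial/\partial x_1\wedge\cdots\wedge\partial/\partial x_d$ at an arbitrary point and applies Lemma \ref{cal} to the linear map $J_{\R}\varphi(p)$ to identify the density as $\sqrt{\det[(J_{\R}\varphi)(J_{\R}\varphi)^t]}$. Your additional framing via the induced Riemannian metric and the remarks on the area formula are consistent elaborations of the same pointwise computation rather than a different route.
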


\begin{proof}
For any $p \in U$, by Lemma \ref{cal}, 
\begin{equation*}
\begin{split}
(\varphi^{*}dm_V)(p){\left(\frac{\partial}{\partial x_1}\wedge\cdots\wedge \frac{\partial}{\partial x_d}\right)\vline}_{p} &= dm_V (\varphi(p)) \left( \varphi_{*} {\left(\frac{\partial}{\partial x_1}\wedge\cdots\wedge \frac{\partial}{\partial x_d}\right)\vline}_{p} \right)\\
&= m_d\left((J_{\R}\varphi)(p) (I)  \right) \\
&=\sqrt{\det[(J_{\R}\varphi)(J_{\R}\varphi)^t]}(p),
\end{split}
\end{equation*}
where $I$ is the unit cube in $\R^d$. It follows that $\varphi^{*} dm_V=\sqrt{\det[(J_{\R}\varphi)(J_{\R}\varphi)^t]}dx_1\cdots dx_d$.
\end{proof}

We now apply the above result to  a proper holomorphic   map
\[
\Phi:X\to\Omega,
\]
from a domain $X$ in $\C^n$ to a bounded domain $\Omega$ in $\C^n$, where $\Phi$ extends smoothly to $\partial X$.
 The complex Jacobian matrix of $\Phi=(\phi_1,\dots,\phi_n)$  will be denoted by
\[
J_{\C}\Phi=\left(\begin{array}{cccc} \frac{\partial \phi_1}{\partial z_1} & \frac{\partial \phi_2}{\partial z_1} & \dots & \frac{\partial \phi_n}{\partial z_1} \\ \dots \\ \frac{\partial \phi_1}{\partial z_n} & \frac{\partial \phi_2}{\partial z_n} & \dots & \frac{\partial \phi_n}{\partial z_n} \end{array}\right)_{n\times n}.
\]
Assume $\dim_{\R}(\Gamma_X)=d$ with real parameters $x=(x_1,x_2,\dots,x_d)$. Let
\[
J_{\R}Z=\left(\begin{array}{cccc} \frac{\partial z_1}{\partial x_1} & \frac{\partial z_2}{\partial x_1} & \dots & \frac{\partial z_n}{\partial x_1} \\ \dots \\ \frac{\partial z_1}{\partial x_d} & \frac{\partial z_2}{\partial x_d} & \dots & \frac{\partial z_n}{\partial x_d} \end{array}\right)_{d\times n}
\]
and
\begin{equation}
\label{matrixA}
A:=J_{\R}Z\cdot J_{\C}\Phi.
\end{equation}

The density function can be calculated in the following formula.

\begin{proposition}\label{den}
Denote $\mu=m_{\Gamma_{\Omega}}$ to be the induced Lebesgue measure on $\Gamma_{\Omega}=\Phi(\Gamma_{X})$. Then under the parameterization $x=(x_1,x_2,\dots,x_d)$ of $\Gamma_X$,
$$\Phi^*\mu = \sqrt{\det[\Re(A\overline{A^t})]} dx_1 \cdots dx_d,$$
which is  almost everywhere positive on $\Gamma_X$.
\end{proposition}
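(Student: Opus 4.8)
The plan is to realize $\Gamma_{\Omega}$ as the image of the composite map and then reduce everything to the real change of variables formula \eqref{pbjacobian} already established. Let $Z\colon U\to\Gamma_X\subset\C^n\cong\R^{2n}$ denote the parametrization $x\mapsto(z_1,\dots,z_n)$, so that $\varphi:=\Phi\circ Z\colon U\to\Gamma_{\Omega}\subset\R^{2n}$ parametrizes $\Gamma_{\Omega}$. Since $\varphi^{*}=Z^{*}\Phi^{*}$, pulling back the induced Lebesgue measure $\mu=m_{\Gamma_{\Omega}}$ through $Z$ after $\Phi$ is the same as pulling it back through $\varphi$, that is, $Z^{*}(\Phi^{*}\mu)=\varphi^{*}m_{\Gamma_{\Omega}}$; this is exactly the meaning of ``$\Phi^{*}\mu$ under the parameterization $x$.'' Applying \eqref{pbjacobian} to $\varphi$ with target dimension $k=2n\ge d$ gives $\varphi^{*}m_{\Gamma_{\Omega}}=\sqrt{\det[(J_{\R}\varphi)(J_{\R}\varphi)^{t}]}\,dx_1\cdots dx_d$, where $J_{\R}\varphi$ is the $d\times 2n$ real Jacobian. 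It therefore remains to identify the $d\times d$ Gram matrix $(J_{\R}\varphi)(J_{\R}\varphi)^{t}$ with $\Re(A\overline{A^{t}})$.

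The identification proceeds in two routine steps. First I would check that $A$ is precisely the complex Jacobian of $\varphi$. Writing $\varphi=(w_1,\dots,w_n)$ with $w_l=\phi_l\circ Z$, the holomorphicity of $\Phi$ kills the $\partial/\partial\bar z_j$ terms in the chain rule, so $\partial w_l/\partial x_i=\sum_{j}(\partial\phi_l/\partial z_j)(\partial z_j/\partial x_i)$, which is exactly the $(i,l)$ entry of $A=J_{\R}Z\cdot J_{\C}\Phi$ from \eqref{matrixA}. Next, splitting $w_l=u_l+i v_l$ into real and imaginary parts, the $(i,i')$ entry of $(J_{\R}\varphi)(J_{\R}\varphi)^{t}$ is $\sum_l(\partial_{x_i}u_l\,\partial_{x_{i'}}u_l+\partial_{x_i}v_l\,\partial_{x_{i'}}v_l)$, whereas the $(i,i')$ entry of $A\overline{A^{t}}$ is $\sum_l(\partial_{x_i}w_l)\overline{(\partial_{x_{i'}}w_l)}$. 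Taking real parts and using $\Re(z\bar z')=\Re z\,\Re z'+\Im z\,\Im z'$ matches the two expressions term by term, yielding $(J_{\R}\varphi)(J_{\R}\varphi)^{t}=\Re(A\overline{A^{t}})$ and hence the stated formula.

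Finally, for the almost-everywhere positivity I would read $\Re(A\overline{A^{t}})$ as the real Gram matrix of the $d$ rows of $A$, viewed as vectors in $\R^{2n}$; it is automatically positive semidefinite, and positive definite exactly when those rows are linearly independent over $\R$. Since $Z$ is an immersion, the rows of $J_{\R}Z$ are $\R$-linearly independent, and right multiplication by $J_{\C}\Phi$ is a $\C$-linear, hence $\R$-linear, automorphism of $\C^n$ wherever $\det J_{\C}\Phi\neq 0$, so it preserves this independence; thus $\det[\Re(A\overline{A^{t}})]>0$ off the critical set $\{\det J_{\C}\Phi=0\}\cap\Gamma_X$. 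The main obstacle is to see that this critical set is negligible: $\det J_{\C}\Phi$ is holomorphic on $X$ and not identically zero because $\Phi$ is a branched cover of finite order, so its boundary trace cannot vanish on a set of positive measure by the boundary uniqueness theory for holomorphic functions on $\D^n$ and $\B^n$ (and on the bounded complete Reinhardt domains with $C^2$ boundary under consideration). Invoking this uniqueness gives positivity $\sigma$-almost everywhere on $\Gamma_X$, completing the proof.
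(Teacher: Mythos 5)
Your proof is correct and follows essentially the same route as the paper: both reduce to the real change-of-variables formula \eqref{pbjacobian}, identify $A$ with the complex Jacobian of $\Phi\circ Z$ via the chain rule and holomorphicity, and match $(J_{\R}\varphi)(J_{\R}\varphi)^{t}$ with $\Re(A\overline{A^{t}})$ entry by entry. The only divergence is in the almost-everywhere positivity, where the paper simply cites that $\Phi$ is a local diffeomorphism from $\Gamma_X$ to $\Gamma_{\Omega}$ almost everywhere, while you give a more explicit Gram-matrix argument combined with boundary uniqueness for the holomorphic function $\det J_{\C}\Phi$; both are acceptable, and yours is arguably the more self-contained justification.
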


\begin{proof}
By identifying  $\Phi$ as a smooth map from $\Gamma_X$ (with real parameters) into $\R^{2n}$, it follows from \eqref{pbjacobian} that $$\Phi^*\mu =\sqrt{\det[J_{\R}\Phi (J_{\R}\Phi)^t]}dx_1 \cdots dx_d .$$
For the map $\Phi=(\phi_1,\dots,\phi_n)$ in $\C^n$, let $\phi_j=u_j+iv_j$ for $j=1,\dots,n$, where $u_j$ and $v_j$ are real-valued functions. Then
\[
J_{\R}\Phi=\left(\begin{array}{ccccc} \frac{\partial u_1}{\partial x_1} & \frac{\partial v_1}{\partial x_1} & \dots & \frac{\partial u_n}{\partial x_1} & \frac{\partial v_n}{\partial x_1} \\ \dots \\ \frac{\partial u_1}{\partial x_d} & \frac{\partial v_1}{\partial x_d} & \dots & \frac{\partial u_n}{\partial x_d} & \frac{\partial v_n}{\partial x_d} \end{array}\right)_{d\times 2n}.
\]
On the other hand, note that $\Phi=(\phi_1,\dots,\phi_n)$ is holomorphic. By the chain rule, one obtains
\begin{equation*}
\begin{split}
A=J_{\R}Z\cdot J_{\C}\Phi
&=\left(\begin{array}{cccc} \frac{\partial \phi_1}{\partial x_1} & \frac{\partial \phi_2}{\partial x_1} & \dots & \frac{\partial \phi_n}{\partial x_1} \\ \dots \\ \frac{\partial \phi_1}{\partial x_d} & \frac{\partial \phi_2}{\partial x_d} & \dots & \frac{\partial \phi_n}{\partial x_d} \end{array}\right)_{d\times n}\\
&=\left(\begin{array}{cccc} \frac{\partial u_1}{\partial x_1}+i\frac{\partial v_1}{\partial x_1} & \frac{\partial u_2}{\partial x_1}+i\frac{\partial v_2}{\partial x_1} & \dots & \frac{\partial u_n}{\partial x_1}+i\frac{\partial v_n}{\partial x_1} \\ \dots \\ \frac{\partial u_1}{\partial x_d}+i\frac{\partial v_1}{\partial x_d} & \frac{\partial u_2}{\partial x_d}+i\frac{\partial v_2}{\partial x_d} & \dots & \frac{\partial u_n}{\partial x_d}+i\frac{\partial v_n}{\partial x_d} \end{array}\right)_{d\times n}.
\end{split}
\end{equation*}
So
\[
\Re(A)=\left(\begin{array}{cccc} \frac{\partial u_1}{\partial x_1} & \frac{\partial u_2}{\partial x_1} & \dots & \frac{\partial u_n}{\partial x_1} \\ \dots \\ \frac{\partial u_1}{\partial x_d} & \frac{\partial u_2}{\partial x_d} & \dots & \frac{\partial u_n}{\partial x_d} \end{array}\right)_{d\times n}
\]
and
\[
\Im(A)=\left(\begin{array}{cccc} \frac{\partial v_1}{\partial x_1} & \frac{\partial v_2}{\partial x_1} & \dots & \frac{\partial v_n}{\partial x_1} \\ \dots \\ \frac{\partial v_1}{\partial x_d} & \frac{\partial v_2}{\partial x_d} & \dots & \frac{\partial v_n}{\partial x_d} \end{array}\right)_{d\times n}.
\]
Therefore,
\[
J_{\R}\Phi=\left[\Re(A), \Im(A)\right]_{d\times 2n}\cdot E_1\cdots E_k,
\]
where $E_1,\dots,E_k$ are elementary $2n\times 2n$ matrices (switching the columns). Note that
\[
E_jE_j^t=I_{2n\times 2n}
\]
for $j=1,\dots,k$. One obtains
\begin{align*}
J_{\R}\Phi(J_{\R}\Phi)^t
&=\left[\Re(A), \Im(A)\right]_{d\times 2n}\cdot E_1\cdots E_kE_k^t\cdots E_1^t\left[\Re(A), \Im(A)\right]_{d\times 2n}^t\\
&=\left[\Re(A), \Im(A)\right]_{d\times 2n}\left[\Re(A), \Im(A)\right]_{d\times 2n}^t\\
&=\Re(A)\Re(A^t)+\Im(A)\Im(A^t).
\end{align*}
Since $A=\Re(A)+i\Im(A)$, a direct computation shows
\begin{align*}
A\overline{A^t}
&=[\Re(A)+i\Im(A)]\cdot[\Re(A^t)-i\Im(A^t)]\\
&=[\Re(A)\Re(A^t)+\Im(A)\Im(A^t)]+i[-\Re(A)\Im(A^t)+\Im(A)\Re(A^t)].
\end{align*}
Therefore, $J_{\R}\Phi(J_{\R}\Phi)^t=\Re(A\overline{A^t})$, and thus
\[
\Phi^*\mu = \sqrt{\det[\Re(A\overline{A^t})]} dx_1 \cdots dx_d.
\]
Moreover, since $\Phi$ is a local differeomorphism from $\Gamma_X$ to $\Gamma_\Omega$ almost everywhere, $J_{\R} \Phi$ is non-singular almost everywhere on $\Gamma_X$. It follows that $\det[\Re(A\overline{A^t})]\neq0$ almost everywhere on $\Gamma_X$.
\end{proof}

\begin{remark}\label{arclength}
When $n=1$, let $X=\D$ be the unit disc in $\C$ and $\Phi=\varphi:\D\to\Omega$ be a conformal map that extends continuously to $\overline{\D}$. Under the parametrization $z=e^{i\theta}$ on the unit circle $\T$, a direct computation shows $\Phi^*m_{\Gamma_{\Omega}}=|\varphi'|d\theta$, which is the pull-back of the arc length of $\Gamma_{\Omega}$ in the classical case (see \cite{LS04}).
\end{remark}

\begin{corollary}\label{leb}
Let $g: X\subset \mathbb{C}^n \rightarrow X\subset \mathbb{C}^n$ be a unitary transformation and $g(\Gamma_X) = \Gamma_X$. Then  $g^*\sigma = \sigma$, where $d\sigma$ is the induced Lebesgue measure on $\Gamma_X$.
\end{corollary}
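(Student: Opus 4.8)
The plan is to exploit the fact that, under the identification $\C^n\cong\R^{2n}$, a unitary transformation is exactly a Euclidean isometry of $\R^{2n}$, and that the induced Lebesgue measure $\sigma$ on $\Gamma_X$ is the $d$-dimensional volume measure built from the ambient metric. Since isometries preserve Hausdorff measure on submanifolds, the conclusion is geometrically immediate; the point is to make this transparent through the explicit Gram-determinant formula \eqref{pbjacobian}, so that the invariance reduces to a single orthogonality relation.

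First I would realify $g$. Writing the ambient point as a real vector in $\R^{2n}$ and representing $g$ as a real-linear map $M$, the unitarity $g\in U(n)$ translates into $M\in O(2n)$, hence $M^tM=I_{2n}$. Next, fix local coordinates $x=(x_1,\dots,x_d)$ on $\Gamma_X$ with parametrization $Z\colon U\to\Gamma_X\subset\R^{2n}$. Because $g$ is a diffeomorphism with $g(\Gamma_X)=\Gamma_X$, the composition $g\circ Z\colon U\to\Gamma_X$ is again a (local) parametrization of $\Gamma_X$, now covering $g$ of the original patch. By the chain rule, and in the row convention of \eqref{pbjacobian}, the $d\times 2n$ real Jacobians are related by
\[
J_{\R}(g\circ Z)=(J_{\R}Z)\,M^t .
\]

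I would then apply \eqref{pbjacobian} to both parametrizations $Z$ and $g\circ Z$. Using $M^tM=I_{2n}$, the Gram matrix entering the density is unchanged:
\[
J_{\R}(g\circ Z)\,\bigl(J_{\R}(g\circ Z)\bigr)^t=(J_{\R}Z)\,M^tM\,(J_{\R}Z)^t=(J_{\R}Z)(J_{\R}Z)^t,
\]
so the two density functions $\sqrt{\det[(J_{\R}Z)(J_{\R}Z)^t]}$ agree. Equivalently, $(g\circ Z)^*\sigma=Z^*\sigma$ as measures on $U$; since $(g\circ Z)^*=Z^*g^*$ and $Z$ is a diffeomorphism onto its image, this forces $g^*\sigma=\sigma$ on each coordinate patch, and hence globally after covering $\Gamma_X$ by charts. (Concretely, for measurable $E\subset\Gamma_X$ one has $g(E)=(g\circ Z)(Z^{-1}(E))$, and integrating the identical densities over $Z^{-1}(E)$ yields $\sigma(g(E))=\sigma(E)$.)

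There is no genuine analytic obstacle here: the statement is merely the isometry-invariance of the induced surface measure, and the only point requiring care is the bookkeeping. One must keep the distinction between the complex Jacobian $J_{\C}\Phi$ and the $d\times 2n$ real Jacobian that actually enters \eqref{pbjacobian}, and verify that ``unitary'' is precisely the hypothesis producing the real orthogonality $M^tM=I_{2n}$ that makes the $M$-factors cancel. Everything else is the routine verification that $g\circ Z$ remains an admissible parametrization, which is guaranteed by $g$ being a $\Gamma_X$-preserving diffeomorphism.
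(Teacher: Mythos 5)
Your proof is correct and takes essentially the same route as the paper: both reduce the claim to the cancellation of the unitary factor inside the Gram determinant of the change-of-variables formula. The only difference is packaging — the paper applies the complex form (Proposition \ref{den}, with $J_{\C}\Phi = g$ and $g\cdot\overline{g^t}=I$) directly, while you realify first and use $M^tM=I_{2n}$ in \eqref{pbjacobian}; since Proposition \ref{den} is itself proved by showing $J_{\R}\Phi(J_{\R}\Phi)^t=\Re(A\overline{A^t})$, these are the same computation.
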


\begin{proof}
Applying $\Phi=g: X\rightarrow X$ in Proposition \ref{den}, 
\begin{equation*}
\begin{split}
g^*\sigma &=\sqrt{\det[\Re(A\overline{A^t})]} dx_1 \cdots dx_d =  \sqrt{\det[\Re(J_{\R}Z \cdot g \cdot \overline{(J_{\R}Z \cdot g)^t})]} dx_1 \cdots dx_d \\
&=\sqrt{\det[\Re(J_{\R}Z \cdot \overline{(J_{\R}Z) ^t})]} dx_1 \cdots dx_d ,
\end{split}
\end{equation*}
as $g \cdot \overline{g^t} $ is the identity matrix.
 Similarly, if we apply Proposition \ref{den} to the identity map $I: X \rightarrow X$, then $\sigma = I^*\sigma = \sqrt{\det[\Re(J_{\R}Z \cdot \overline{J_{\R}Z^t})]} dx_1 \cdots dx_d$. It thus follows that $g^*\sigma = \sigma$.
\end{proof}


\section{The Hardy spaces and the Szeg\H{o} projection}\label{generalsetup}


Let $X$ be a bounded complete Reinhardt domain in $\C^n$ centered at the origin with $C^2$-smooth boundary and let $\Gamma_X$ be $\partial X$. We also consider the case $X=\D^n$ the polydisc and $\Gamma_X=\T^n$. The classical Hardy spaces on $X$ are defined as follows.

\begin{definition}\label{HponX}
For $0<p<\infty$, the Hardy space $H^p(X)$ is the set of all holomorphic functions $f$ on $X$ such that
\[
\|f\|^p_{p, X}= \sup_{0 <r< 1} \int_{\Gamma_{X}}  |f(rz)|^p d\sigma(z) < \infty.
\]
$\|f\|_{p, X}$ is called the $H^p$-norm of $f$.
\end{definition}

It is well-known that 
for $f\in H^p(X)$, its non-tangential limit $\displaystyle f^*(z)=\lim_{r\to1^-}f(rz)$ exists almost everywhere on the boundary $\Gamma_X$. Moreover, for $0 < p<\infty$, the $H^p$-norm of $f$ is equivalent to the $L^p(\Gamma_X)$-norm of $f^*$ given by
\[
\|f^*\|^p_{p, \Gamma_X}=\int_{\Gamma_X}|f^*(z)|^pd\sigma(z).
\]
If $H^p(X)$ is equipped with the equivalent norm $\|f\|_{p, X} =   \left(\int_{\Gamma_{X}}|f^*(z)|^p d\sigma(z)\right)^{\frac{1}{p}},$ then $H^p(X)$ is identified as a closed subspace of $L^p(\Gamma_X)$.

\begin{example}
When $X$ is the polydisc $\D^n$ or   the unit ball $X=\B^n$ with $\Gamma_X$ being $\T^n$ or $\partial \B^n$, respectively. The  Hardy space $H^p(X)$ is isometric to the closure of the space of the radial limits  of holomorphic functions on $X$ with finite $L^p$-norm on $\Gamma_X$ (   \cite{R69,R80}). Namely,
\[
\|f\|^p_{p,X} = \sup_{0 < r <1} \int_{\Gamma_X} |f(rz)|^p d\sigma(z) =\int_{\Gamma_X}|f^*(z)|^p d\sigma(z).
\]
\end{example}

Note that $H^2(X)$ is a complete Hilbert space. The Szeg\H{o} projection is defined to be the orthogonal projection \[
S_X: L^2(\Gamma_X)\to H^2(X).
\]
By the classical theory, the Szeg\H{o} projection can be recognized as an integral operator
\[
S_X(f)(z)=\int_{\Gamma_X} K_X(z, \zeta) f(\zeta) d\sigma(\zeta)
\]
for any $f \in L^2(\Gamma_X)$, where the kernel $K_X(z, \zeta)$ is called the Szeg\H{o} kernel on $X$. If $\{e_j(z)\}$ is an orthonormal basis for $H^2(X)$, then the Szeg\H{o} kernel can be computed by
\[
K_X(z, \zeta) = \sum_j e_j(z) \overline{e_j(\zeta)}.
\]
Moreover, the Szeg\H{o} projection enjoys the following reproducing property:
$$S_X(f)(z) = \int_{\Gamma_X} K_X(z, \zeta) f(\zeta) d\sigma(\zeta)=f(z),$$ for any $f \in H^2(X)$. 
The readers may refer to \cite{R69, Ste72, K92} for details.

Recall that $G$ is the finite group of deck transformations on $X$ and $\Gamma_X$ is invariant under $G$ (Definition \ref{partbdry}). We define the following.

\begin{definition}\label{funinvariant}
A function $f$ on $X$ (or $\Gamma_X$ respectively) is said to be $G$-invariant, if it is invariant under the group actions of $G$. Namely,
$f(\tau z)=f(z)$
for all $\tau\in G$ and $z\in X$ (or $z\in\Gamma_X$ respectively).
\end{definition}

\begin{lemma}\label{kernelinvariant}
Let $G$ be a finite subgroup of the unitary group $U(n)$. The Szeg\H{o} kernel $K_X(z,\zeta)$ on $X$ is $G$-invariant in the following sense:
\[
K_X(\tau z,\tau\zeta)=K_X(z,\zeta)
\]
for any $\tau\in G$.  As a result, if $f$ is $G$-invariant, so is its Szeg\H{o} projection $S_X(f)$ on $X$.
\end{lemma}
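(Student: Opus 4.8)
The plan is to realize each $\tau\in G$ as a unitary operator on $H^2(X)$ by precomposition, and then to exploit the basis-independence of the reproducing kernel. Concretely, I would define $U_\tau\colon H^2(X)\to H^2(X)$ by $U_\tau f = f\circ\tau$. Since $\tau\in U(n)$ is a $\C$-linear holomorphic automorphism carrying $X$ onto $X$ (as $\Gamma_X$, and hence $X$, is $G$-invariant), the function $f\circ\tau$ is again holomorphic on $X$; and since $\tau$ restricts to a diffeomorphism of $\Gamma_X$ onto itself, the non-tangential limit of $f\circ\tau$ is $f^*\circ\tau$ almost everywhere. The one substantive input is the measure invariance: by Corollary \ref{leb}, $\tau^*\sigma=\sigma$, so $\int_{\Gamma_X}|f(\tau z)|^2\,d\sigma(z)=\int_{\Gamma_X}|f(z)|^2\,d\sigma(z)$. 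Thus $U_\tau$ is an isometry, and because $U_{\tau^{-1}}$ (defined via $\tau^{-1}\in G$) is its two-sided inverse, $U_\tau$ is unitary.

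Next I would invoke the expansion $K_X(z,\zeta)=\sum_j e_j(z)\overline{e_j(\zeta)}$ for an orthonormal basis $\{e_j\}$ of $H^2(X)$, together with the fact that the left-hand side is independent of the chosen orthonormal basis. Since $U_\tau$ is unitary, $\{U_\tau e_j\}=\{e_j\circ\tau\}$ is again an orthonormal basis of $H^2(X)$, and substituting it into the kernel formula yields
$$
K_X(z,\zeta)=\sum_j (e_j\circ\tau)(z)\,\overline{(e_j\circ\tau)(\zeta)}=\sum_j e_j(\tau z)\,\overline{e_j(\tau\zeta)}=K_X(\tau z,\tau\zeta),
$$
which is the claimed invariance. (Equivalently, one could argue directly from the reproducing property that $K_X(\cdot,\tau\zeta)=U_\tau^{*}K_X(\cdot,\zeta)$, but the orthonormal-basis computation is cleaner.)

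For the consequence on $S_X(f)$, I would start from $S_X(f)(\tau z)=\int_{\Gamma_X}K_X(\tau z,\zeta)f(\zeta)\,d\sigma(\zeta)$ and perform the change of variables $\zeta=\tau\eta$. As $\tau$ is a measure-preserving bijection of $\Gamma_X$ (Corollary \ref{leb}), we have $d\sigma(\zeta)=d\sigma(\eta)$, so this integral equals $\int_{\Gamma_X}K_X(\tau z,\tau\eta)f(\tau\eta)\,d\sigma(\eta)$. Applying the kernel invariance just established and the hypothesis $f(\tau\eta)=f(\eta)$ collapses the integrand to $K_X(z,\eta)f(\eta)$, giving $S_X(f)(\tau z)=S_X(f)(z)$; hence $S_X(f)$ is $G$-invariant.

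I expect the only delicate point to be verifying that $U_\tau$ is genuinely a well-defined unitary operator on $H^2(X)$ — specifically, that passage to boundary values commutes with precomposition by $\tau$ and that the $L^2(\Gamma_X)$-norm is preserved. Both reduce to the measure invariance of Corollary \ref{leb} and the fact that $\tau$ restricts to a measure-preserving diffeomorphism of $\Gamma_X$; once these are in hand, everything else is a formal manipulation of the kernel.
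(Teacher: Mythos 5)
Your proposal is correct and follows essentially the same route as the paper: measure invariance from Corollary \ref{leb} shows that $\{e_j\circ\tau\}$ is again an orthonormal basis, basis-independence of the Szeg\H{o} kernel gives $K_X(\tau z,\tau\zeta)=K_X(z,\zeta)$, and the change of variables $\zeta=\tau\eta$ yields the invariance of $S_X(f)$. Your additional remarks on $U_\tau$ being a well-defined unitary (boundary values commuting with precomposition) only make explicit what the paper leaves implicit.
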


\begin{proof}
By Corollary \ref{leb}, the induced Lebesgue measure on $\Gamma_X$ is preserved by $U(n)$. It follows that if $\{e_j\}_{j=1}^{\infty}$ is an orthonormal basis, so is $\{e_j\circ\tau\}_{j=1}^{\infty}$ for any $\tau\in G$.  Since $K_X(z,\zeta)$ is independent of the choice of the orthonormal basis,  summing over the two basis gives
\[
K_X(z,\zeta)=\sum_je_j(z)\overline{e_j(\zeta)}=\sum_je_j(\tau z)\overline{e_j(\tau\zeta)}=K_X(\tau z,\tau\zeta).
\]
As a result, if $f$ is $G$-invariant, by the change of variables $\eta=\tau\zeta$, 
\begin{align*}
S_X(f)(\tau z)
&=\int_{\Gamma_X}K_X(\tau z,\eta)f(\eta)d\sigma(\eta)\\
&=\int_{\Gamma_X}K_X(\tau z,\tau\zeta)f(\tau\zeta)d\sigma(\tau\zeta)\\
&=\int_{\Gamma_X}K_X(z,\zeta)f(\zeta)d\sigma(\zeta)=S_X(f)(z)
\end{align*}
for any $\tau\in G$.
\end{proof}

Now assume the quadruple $(X, \Omega, \Phi, G)$ satisfies Definition \ref{partbdry}. Moreover, assume $G$ to be a finite subgroup of the unitary group $U(n)$. 
Note that $\Phi$ and $\sigma$ (the induced Lebesgue measure on $\Gamma_X$) are both $G$-invariant. Writing $\Phi^*dm_{\Gamma_\Omega} = w d\sigma$, then 
$w$ is a $G$-invariant  density function on $\Gamma_X$.
We are now ready to define the Hardy spaces on $\Omega$ using the quotient structure, under an additional 
 admissible condition.

\begin{definition}\label{con}
A $G$-invariant density function $w$ on $\Gamma_X$ is admissible if there exists a non-zero $G$-invariant holomorphic function $g$ on $X$, such that the non-tangential  limit $g^*$ exists and $|g^*|^2=w$ almost everywhere on $\Gamma_X$. In such a case, the quadruple $(X, \Omega, \Phi, G)$ is also called admissible.
\end{definition}

\begin{remark}
The holomorphic function $g$ is non-zero on $X$, but its non-tangential limit $g^*$ may have zeros on $\Gamma_X$. Indeed, by Proposition \ref{den}, $w>0$ almost everywhere on $\Gamma_X$. So $g^*\neq0$ almost everywhere on $\Gamma_X$.
\end{remark}

\begin{definition}\label{Hpspace}
Let $(X, \Omega, \Phi, G)$ be an admissible quadruple. The Hardy space $H^p({\Omega})$ is defined to be the set of all holomorphic functions $f$ on $\Omega$ such that 
\begin{equation}\notag
\|f\|^p_{p, \Omega}=\frac{1}{|G|} \sup_{0 < r < 1} \int_{\Gamma_X}  |(f\circ \Phi)(rz)|^p |g(rz)|^2 d\sigma(z) < \infty, 
\end{equation}
where $|G|$ is the cardinality of $G$.
\end{definition}

\begin{theorem}\label{hardy}
Let $0< p<\infty$. For any $f \in H^p(\Omega)$, $(f\circ\Phi)^*$ exists and induces a function defined almost everywhere on $\Gamma_\Omega$, denoted by $f^*$, called the non-tangential 
limit of $f$ on $\Omega$. Moreover, the map $f \mapsto f^*$ is a quasi-isometry from $H^p(\Omega)$ onto a closed subspace of $L^p(\Gamma_\Omega)$. In particular, if $X=\D^n$ or $\B^n$, the map is an isometry.
\end{theorem}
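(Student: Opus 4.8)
The plan is to reduce everything to the classical Hardy theory on $X$ by absorbing the weight $|g|^2$ into a single holomorphic factor. By the remark following Definition \ref{con} the function $g$ is non-vanishing on $X$, and since a bounded complete Reinhardt domain (being star-shaped about the origin) and the polydisc are both simply connected, a holomorphic branch of $\log g$ exists on $X$. I would set $g^{2/p} := \exp(\tfrac{2}{p}\log g)$, a non-vanishing holomorphic function on $X$ with $|g^{2/p}|^p = |g|^2$, and check that the chosen branch is $G$-invariant (the function $\log g(\tau z) - \log g(z)$ is continuous, $2\pi i\Z$-valued, hence constant, and vanishes because $\tau$ has finite order). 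For $f$ holomorphic on $\Omega$ I put $F := (f\circ\Phi)\,g^{2/p}$, holomorphic on $X$; then $|F(rz)|^p = |(f\circ\Phi)(rz)|^p|g(rz)|^2$, so by Definition \ref{Hpspace} and Definition \ref{HponX} one has $\|f\|_{p,\Omega}^p = \tfrac{1}{|G|}\|F\|_{p,X}^p$, and in particular $f\in H^p(\Omega)$ if and only if $F\in H^p(X)$.

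With this correspondence in hand, I would extract the boundary values. The classical theory on $X$ gives the non-tangential limit $F^*$ a.e.\ on $\Gamma_X$. Since $g$ has non-tangential limit $g^*\neq 0$ a.e.\ (again the remark after Definition \ref{con}, using $w>0$ a.e.\ from Proposition \ref{den}), the branch $\log g$, and hence $g^{2/p}$, also admits non-tangential limits with $(g^{2/p})^* = (g^*)^{2/p}$ and $|(g^*)^{2/p}| = w^{1/p}$ a.e. Multiplicativity of non-tangential limits then yields $(f\circ\Phi)^* = F^*\,(g^*)^{-2/p}$ a.e.\ on $\Gamma_X$, which gives the first claim. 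Because $\Phi\circ\tau = \Phi$ for $\tau\in G$, the function $f\circ\Phi$ is $G$-invariant, so $(f\circ\Phi)^*$ is $G$-invariant a.e.\ and therefore descends to a well-defined function $f^*$ on $\Gamma_\Omega = \Phi(\Gamma_X)$.

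Next I would compute the norm. Using the density relation \eqref{change} of Definition \ref{weight}, namely $\int_{\Gamma_\Omega} h\,dm_{\Gamma_\Omega} = \tfrac{1}{|G|}\int_{\Gamma_X}(h\circ\Phi)\,w\,d\sigma$, with $h = |f^*|^p$, together with $w = |g^*|^2$, gives
\[
\|f^*\|_{p,\Gamma_\Omega}^p = \frac{1}{|G|}\int_{\Gamma_X}|(f\circ\Phi)^*|^p\,w\,d\sigma = \frac{1}{|G|}\int_{\Gamma_X}|F^*|^p\,d\sigma = \frac{1}{|G|}\|F^*\|_{p,\Gamma_X}^p.
\]
Comparing with $\|f\|_{p,\Omega}^p = \tfrac{1}{|G|}\|F\|_{p,X}^p$ and invoking the classical equivalence $\|F\|_{p,X}\asymp\|F^*\|_{p,\Gamma_X}$ shows $f\mapsto f^*$ is a quasi-isometry; on $\D^n$ and $\B^n$ the equivalence is an equality (the example following Definition \ref{HponX}), so the map is an isometry there. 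In particular it is injective.

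Finally, for closedness of the image I would promote the correspondence $f\leftrightarrow F$ to a bijection between $H^p(\Omega)$ and the $G$-invariant subspace $H^p_G(X)$ of $H^p(X)$: the factor $g^{2/p}$ is $G$-invariant, so $F$ is $G$-invariant; conversely any $G$-invariant $F\in H^p(X)$ yields the $G$-invariant holomorphic function $Fg^{-2/p}$ on $X$, which descends to a holomorphic $f$ on $\Omega$ by the standard Riemann extension argument across the branch locus for quotient domains. Since $H^p_G(X)$ is the fixed-point set of the isometric $G$-action on the complete space $H^p(X)$, it is closed, hence complete; the constant-multiple isometry $f\mapsto F$ then makes $H^p(\Omega)$ complete, and the image of a complete space under a quasi-isometry is complete, hence closed in $L^p(\Gamma_\Omega)$. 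I expect the main obstacle to be the careful treatment of the boundary limits, that is, constructing $g^{2/p}$ as a genuine $G$-invariant holomorphic function, justifying $(g^{2/p})^* = (g^*)^{2/p}$ with $g^*\neq 0$ a.e., and the multiplicativity needed to pass between $F^*$ and $(f\circ\Phi)^*$; once these are secured the norm identity and the closedness are routine, the only secondary care being the descent of $G$-invariant holomorphic functions across the branch locus used for surjectivity of $f\leftrightarrow F$.
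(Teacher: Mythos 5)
Your proposal is correct and follows essentially the same route as the paper: absorb the weight into the holomorphic factor $g^{2/p}$, identify $f\in H^p(\Omega)$ with $g^{2/p}\cdot f\circ\Phi\in H^p(X)$, invoke the classical non-tangential limit theory on $X$, and transfer the norm to $\Gamma_\Omega$ via the change-of-variables identity \eqref{change}. You supply some details the paper leaves implicit (the $G$-invariant branch of $\log g$, multiplicativity of non-tangential limits where $g^*\neq 0$, and the closedness of the image via the $G$-invariant subspace of $H^p(X)$), but the argument is the same in substance.
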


\begin{proof}
Note that $g\neq0$ on $X$, so $g^{\frac{2}{p}}$ is a well-defined holomorphic function on $X$. For any $f\in H^p(\Omega)$,
\[
\|f\|^p_{p, \Omega}=\frac{1}{|G|} \sup_{0 < r< 1} \int_{\Gamma_X}  |g^{\frac{2}{p}}\cdot f\circ \Phi |^p(rz) d\sigma(z) < \infty
\]
implies $g^{\frac{2}{p}}\cdot f\circ \Phi\in H^p(X)$. By the classical theory in Hardy space, the holomorphic function $g^{\frac{2}{p}}\cdot f\circ \Phi$ admits a non-tangential limit $(g^{\frac{2}{p}}\cdot f\circ \Phi)^*$ on $\Gamma_X$. Moreover, 
\begin{equation}\label{h2iso}
\begin{split}
\|f\|^p_{p, \Omega}
&\approx \frac{1}{|G|} \int_{\Gamma_X}|(g^{\frac{2}{p}}\cdot f\circ\Phi)^*(z)|^pd\sigma(z)\\
&=\frac{1}{|G|}\int_{\Gamma_X}|(f\circ\Phi)^*(z)|^pw(z)d\sigma(z).
\end{split}
\end{equation}

For any $\tau \in G, z \in \overline{X}$, since $\Phi \circ \tau(z) = \Phi(z)$ and $r\tau(z) = \tau(rz)$  for any $z \in \overline{X}$, then 
$$(f\circ\Phi)^*(\tau(z)) = \lim_{r\rightarrow 1^-} (f\circ\Phi)(r\tau(z)) =  \lim_{r\rightarrow 1^-} (f\circ\Phi)(\tau(rz)) =  \lim_{r\rightarrow 1^-} (f\circ\Phi)(rz) =(f\circ\Phi)^*(z) .$$
For any $z \in \Gamma_\Omega$, 
the non-tangential limit of $f$ on $\Omega$ can be defined as  $$f^*(z)=\frac{1}{|G|} \left( \Phi_*(f\circ\Phi)^*\right)(z) = (f\circ\Phi)^*(\xi), $$ 
given $\Phi(\xi)=z$, where $\Phi_*$ is the push forward in the sense of distribution. Therefore, by  \eqref{change},
$$\|f\|^p_{p, \Omega}
\approx  \int_{\Gamma_{\Omega}}|f^*(z)|^p dm_{\Gamma_{\Omega}}(z),$$
implying that the map $f \mapsto f^*$ is a quasi-isometry from $H^p(\Omega)$ onto a closed subspace of $L^p(\Gamma_\Omega)$.
In particular, $X=\D^n$ or $\B^n$, the map is an isometry.
\end{proof}

\begin{remark}
If we only consider $p=2$, the assumption $g\neq0$ on $X$ is not necessary.
\end{remark}

Similar to $H^p(X)$, the space $H^p(\Omega)$ is equipped with the equivalent norm:
\[
\|f\|_{p, \Omega} =   \left(\int_{\Gamma_{\Omega}}|f^*(z)|^p dm_{\Gamma_{\Omega}}(z)\right)^{\frac{1}{p}}
\]
for any $f \in H^p(\Omega)$. As a result, $H^p(\Omega)$ is identified as a closed subspace of $L^p(\Gamma_\Omega)$ given in Theorem \ref{hardy}, and $H^2(\Omega)$ is a complete Hilbert space. 

\begin{definition}\label{defszego}
The Szeg\H{o} projection $S_{\Omega}$ on $\Omega$ is the orthogonal projection from $L^2(\Gamma_\Omega)$ onto  $H^2({\Omega})$.
\end{definition}

\begin{remark}
Classical Hilbert space theory shows that the Szeg\H{o} projection $S_{\Omega}$ is an integral operator
\[
S_{\Omega}(f)(z) = \int_{\Gamma_\Omega} K_\Omega(z, \zeta) f(\zeta) dm_{\Gamma_{\Omega}}(\zeta),
\]
where $K_\Omega(z, \zeta)$ is called the Szeg\H{o} kernel on $\Omega$. If $\{\psi_j(z)\}$ is an orthonormal basis for $H^2(\Omega)$, then the Szeg\H{o} kernel can be computed by
\[
K_\Omega(z, \zeta) = \sum_j \psi_j(z) \overline{\psi_j(\zeta)},
\]
where the convergence holds uniformly on any compact subsets in $\Omega \times \Omega$. Moreover, the Szeg\H{o} projection $S_{\Omega}$ is self-adjoint in $L^2(\Gamma_{\Omega})$ and enjoys the interpolation properties in $L^p(\Gamma_{\Omega})$.
\end{remark}

We now derive an integral representation of $S_{\Omega}$, which resembles Bell's transformation formula for the Bergman projection (\cite{Bel82}). 

\begin{theorem}\label{invariant}
Let $K_X(z,\zeta)$ be the Szeg\H{o} kernel on $X$ with $(z,\zeta)\in X\times\Gamma_X$. Then the Szeg\H{o} projection $S_{\Omega}$ can be expressed as
\[
S_{\Omega}(f)(z)=\frac{1}{|G|} \Phi_*\left( \int_{\Gamma_X}\frac{K_X(z,\zeta)}{g(z)\overline{g^*(\zeta)}}f(\Phi(\zeta))w(\zeta)d\sigma(\zeta)\right)
\]
for $f\in L^2(\Gamma_{\Omega})$.
\end{theorem}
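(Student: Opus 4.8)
The plan is to realize $S_{\Omega}$ as a conjugate of $S_X$ by a weighted composition operator, and then to unwind that conjugation into the stated integral. The essential algebraic observation is that the admissibility identity $|g^*|^2 = w$ lets us trade the weight $w$ for boundary values of $g$: since $w(\zeta) = g^*(\zeta)\overline{g^*(\zeta)}$ almost everywhere, we have $w(\zeta)/\overline{g^*(\zeta)} = g^*(\zeta)$ on $\Gamma_X$. Consequently the integral inside the pushforward in the claimed formula equals
\begin{equation*}
\frac{1}{g(z)}\int_{\Gamma_X} K_X(z,\zeta)\, g^*(\zeta)\, f(\Phi(\zeta))\, d\sigma(\zeta) = \frac{1}{g(z)}\, S_X\!\big(g^*\cdot(f\circ\Phi)\big)(z),
\end{equation*}
so the entire right-hand side of the theorem is $\tfrac{1}{|G|}\Phi_*\big(g^{-1}S_X(g^*\cdot(f\circ\Phi))\big)$. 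It therefore suffices to show that $S_{\Omega}f$ coincides with this expression.

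First I would introduce the weighted composition operator $U\colon L^2(\Gamma_{\Omega})\to L^2(\Gamma_X)$ defined by $Uf = g^*\cdot(f\circ\Phi)$. Using $|g^*|^2 = w$ together with the change of variables \eqref{change}, a direct computation gives $\|Uf\|_{L^2(\Gamma_X)}^2 = \int_{\Gamma_X} |f\circ\Phi|^2 w\, d\sigma = |G|\,\|f\|_{L^2(\Gamma_{\Omega})}^2$, so $|G|^{-1/2}U$ is an isometry. Because $g^*$ and $f\circ\Phi$ are $G$-invariant (the latter since $\Phi\circ\tau=\Phi$), the image of $U$ lies in the $G$-invariant subspace $L^2(\Gamma_X)^G$, and the factorization $U = M_{g^*}\circ(f\mapsto f\circ\Phi)$ shows, since $g^*\neq0$ a.e., that the image is exactly that subspace. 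I would then verify that $U$ restricts to a bijection from $H^2(\Omega)$ onto the $G$-invariant part $H^2(X)^G$: the forward inclusion together with the norm match is the content of \eqref{h2iso} in Theorem \ref{hardy}, while for surjectivity one writes a $G$-invariant $F\in H^2(X)$ as $F = g\cdot h$ with $h = F/g$ a $G$-invariant holomorphic function on $X$ (here $g\neq0$ on $X$ is used), which descends through the branched cover $\Phi$ to a holomorphic $f$ on $\Omega$ with $Uf = F$.

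Next I would identify $S_X$ restricted to $L^2(\Gamma_X)^G$ as the orthogonal projection onto $H^2(X)^G$. By Lemma \ref{kernelinvariant}, $S_X$ maps $G$-invariant functions to $G$-invariant functions, so $L^2(\Gamma_X)^G$ is a closed reducing subspace for the self-adjoint idempotent $S_X$; hence the restriction is the orthogonal projection onto $H^2(X)\cap L^2(\Gamma_X)^G = H^2(X)^G$. Since $|G|^{-1/2}U$ is a unitary from $L^2(\Gamma_{\Omega})$ onto $L^2(\Gamma_X)^G$ carrying $H^2(\Omega)$ onto $H^2(X)^G$, conjugating orthogonal projections by a unitary yields $S_{\Omega} = U^{-1}S_X U$, the overall scalar cancelling in the conjugation. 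Finally, unwinding $U^{-1}$ — division by $g$ followed by descent, which in the pushforward normalization of Theorem \ref{hardy} is precisely $\tfrac{1}{|G|}\Phi_*(g^{-1}(\cdot))$ — applied to $S_X(Uf) = S_X(g^*\cdot(f\circ\Phi))$ reproduces the displayed formula via the weight-to-boundary-value substitution above.

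The main obstacle is the rigorous identification $S_{\Omega} = U^{-1}S_X U$, which rests on two points requiring care: that $|G|^{-1/2}U$ is genuinely onto $L^2(\Gamma_X)^G$ with $U(H^2(\Omega)) = H^2(X)^G$, and that $S_X$ restricted to that subspace is still the orthogonal projection onto $H^2(X)^G$. The former is where the branched-covering structure enters substantively, since it requires that a $G$-invariant holomorphic function on $X$ descends to a holomorphic function on $\Omega$ across the branch locus (via Riemann's removable singularity theorem and properness of $\Phi$); the remaining manipulations are routine bookkeeping once the weight identity $w = |g^*|^2$ is in hand.
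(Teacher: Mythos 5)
Your argument is correct, but it is organized differently from the paper's. The paper proceeds by direct verification: it defines the operator $T$ given by the displayed formula, rewrites it (exactly as you do, via $w/\overline{g^*}=g^*$) as $\frac{1}{|G|}\Phi_*\bigl(g^{-1}S_X(g^*\cdot f\circ\Phi)\bigr)$, and then checks the three defining properties of $S_\Omega$ one at a time --- $T$ maps $L^2(\Gamma_\Omega)$ into $H^2(\Omega)$ (using Lemma \ref{kernelinvariant}), $T$ fixes $H^2(\Omega)$ (using the reproducing property of $S_X$ on $g\cdot f\circ\Phi\in H^2(X)$), and $\langle T(f),h\rangle_{\Gamma_\Omega}=\langle f,h\rangle_{\Gamma_\Omega}$ for $h\in H^2(\Omega)$ (using the self-adjointness of $S_X$ and the change of variables). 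You instead package the same ingredients into the unitary conjugation $S_\Omega=U^{-1}S_XU$ with $Uf=g^*\cdot(f\circ\Phi)$. The conceptual gain of your route is that it exhibits $S_\Omega$ as unitarily equivalent to the restriction of $S_X$ to the $G$-invariant subspace, which also prefigures Theorem \ref{main1}; the cost is that you must prove two surjectivity statements the paper never needs --- that $|G|^{-1/2}U$ maps onto all of $L^2(\Gamma_X)^G$ and, more substantively, that $U(H^2(\Omega))=H^2(X)^G$, which requires descending a $G$-invariant holomorphic function $F/g$ through the branched cover and extending across $\Phi(Z)$ by the Riemann removable singularity theorem. You correctly flag this as the delicate point and supply the right justification (local boundedness near the analytic branch locus, $g\neq0$ on $X$), so the argument is complete; it is simply longer than the paper's direct check, which only ever needs that $T$ lands in $H^2(\Omega)$ and acts as the identity there, with no surjectivity required.
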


\begin{proof}
Define
\begin{align*}
T(f)(z)&:=\frac{1}{|G|} \Phi_*\left( \int_{\Gamma_X}\frac{K_X(z,\zeta)}{g(z)\overline{g^*(\zeta)}}f(\Phi(\zeta))w(\zeta)d\sigma(\zeta) \right) \\
&=\frac{1}{|G|} \Phi_*\left( \frac{1}{g(z)}\int_{\Gamma_X}K_X(z,\zeta)f(\Phi(\zeta))g^*(\zeta)d\sigma(\zeta)\right)\\
&=\frac{1}{|G|} \Phi_*\left(\frac{1}{g(z)}S_X(g^*\cdot f\circ\Phi)(z)\right)
,
\end{align*}
where $S_X$ is the Szeg\H{o} projection on $X$. We show that $T$ projects from $L^2(\Gamma_\Omega)$ onto $H^2({\Omega})$ orthogonally.

For $f\in L^2(\Gamma_{\Omega})$, by \eqref{h2iso} with $p=2$, one sees $g^*\cdot f\circ\Phi\in L^2(\Gamma_X)$ and it is $G$-invariant. Then $S_X(g^*\cdot f\circ\Phi)\in H^2(X)$ and it is $G$-invariant by Lemma \ref{kernelinvariant}. As a result, $T(f)\circ \Phi=\frac{1}{g} \cdot S_X(g^*\cdot f\circ\Phi)\in H^2({\Omega})$ by Definition \ref{Hpspace}. So $T$ maps $L^2(\Gamma_{\Omega})$ to $H^2({\Omega})$.

For $f\in H^2({\Omega})$, again one sees $g\cdot f\circ\Phi\in H^2(X)$. Then $S_X(g\cdot f\circ\Phi)=g\cdot f\circ\Phi$, and hence $T(f)=\frac{1}{|G|} \Phi_*\left( \frac{ S_X(g\cdot f\circ\Phi)}{g}\right)=\frac{1}{|G|} \Phi_*(f\circ \Phi) =f$ on $\Omega$. So $T$ preserves $H^2({\Omega})$, or equivalently $T$ is a projection.

To see the orthogonality, one needs to verify $(f-T(f))\perp H^2(\Omega)$ in $L^2(\Gamma_{\Omega})$ for any $f\in L^2(\Gamma_{\Omega})$. This is equivalent to show $\langle f,h\rangle_{\Gamma_{\Omega}}=\langle T(f),h\rangle_{\Gamma_{\Omega}}$ for all $h\in H^2(\Omega)$.  Note that
\[
\langle T(f),h\rangle_{\Gamma_{\Omega}}=\frac{1}{|G|}\langle S_X(g^*\cdot f\circ\Phi), g^*\cdot h\circ\Phi\rangle_{\Gamma_X}=\frac{1}{|G|}\langle g^*\cdot f\circ\Phi, g^*\cdot h\circ\Phi\rangle_{\Gamma_X},
\]
since $S_X$ is orthogonal from $L^2(\Gamma_X)$ onto $H^2(X)$ and $g^*\cdot h\circ\Phi\in H^2(X)$ by \eqref{h2iso}. On the other hand, the change of variables shows
\[
\langle f,h\rangle_{\Gamma_{\Omega}}=\frac{1}{|G|}\langle g^*\cdot f\circ\Phi, g^*\cdot h\circ\Phi\rangle_{\Gamma_X}.
\]
This completes the proof.
\end{proof}

\begin{remark}
The formula in Theorem \ref{invariant} is an integral representation of $S_{\Omega}$ lifted on $X$. The kernel inside may not be the Szeg\H{o} kernel on $\Omega$, since the elements of the basis may not be $G$-invariant. Lemma \ref{kernelinvariant} states that the Szeg\H{o} projection on the covering space $X$ preserves $G$-invariant subspaces.
\end{remark}

To study the regularity of $S_{\Omega}:L^p(\Gamma_{\Omega})\to H^p(\Omega)$, by Theorem \ref{invariant}, we are able to consider the regularity of the Szeg\H{o} projection on weighted $L^p$ space over $X$ instead.

\begin{theorem}\label{main1}
Let $S_{\Omega}$ and $S_{X}$ be the Szeg\H{o} projections on $\Omega$ and $X$, respectively. For $1<p<\infty$, $S_{\Omega}:L^p(\Gamma_{\Omega})\to H^p(\Omega)$ is bounded if and only if $S_{X}:L^p(\Gamma_X,w^{1-\frac{p}{2}})\to H^p(X,w^{1-\frac{p}{2}})$ is bounded restricted to  the subset of $G$-invariant functions.
\end{theorem}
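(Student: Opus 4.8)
The plan is to reduce the statement to a weighted norm equivalence by exhibiting an explicit isometry that intertwines $S_\Omega$ with $S_X$ restricted to $G$-invariant functions. Define the multiplication-and-pullback operator $\Lambda f := g^*\cdot (f\circ\Phi)$, which sends a function on $\Gamma_\Omega$ to a $G$-invariant function on $\Gamma_X$ (it is $G$-invariant because both $g^*$ and $f\circ\Phi$ are). The first step is to check that $\Lambda$ is, up to the constant $|G|^{1/p}$, an isometric isomorphism from $L^p(\Gamma_\Omega)$ onto the subspace of $G$-invariant functions in $L^p(\Gamma_X, w^{1-\frac{p}{2}})$. This is a direct computation using $|g^*|^2=w$ and the change of variables $\Phi^* dm_{\Gamma_\Omega}=w\,d\sigma$ from Proposition \ref{den}: since $|\Lambda f|^p w^{1-\frac{p}{2}}=w^{\frac{p}{2}}|f\circ\Phi|^p w^{1-\frac{p}{2}}=|f\circ\Phi|^p w$, integrating gives $\|\Lambda f\|_{L^p(w^{1-p/2})}^p=|G|\,\|f\|_{p,\Omega}^p$. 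Surjectivity onto the $G$-invariant subspace follows because $g^*\neq 0$ almost everywhere, so every $G$-invariant $h$ is $\Lambda f$ with $f\circ\Phi=h/g^*$. The same computation shows $\Lambda$ restricts to an isometry from $H^p(\Omega)$ onto the $G$-invariant part of $H^p(X, w^{1-\frac{p}{2}})$, since $g\cdot(f\circ\Phi)$ is holomorphic on $X$ whenever $f$ is holomorphic on $\Omega$.

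The second and key step is to establish the intertwining identity $\Lambda\circ S_\Omega=S_X\circ\Lambda$ on $G$-invariant functions, which is essentially the content of the Bell-type formula in Theorem \ref{invariant}. Writing $u=f\circ\Phi$, that theorem gives $S_\Omega(f)\circ\Phi=\frac{1}{g}\,S_X(g^*\cdot u)$ as holomorphic functions on $X$. Passing to non-tangential boundary limits and using that $g$ is nonvanishing holomorphic (so $(1/g)^*=1/g^*$), the factor $g^*$ cancels: $\Lambda(S_\Omega f)=g^*\cdot (S_\Omega(f)\circ\Phi)^*=g^*\cdot\frac{1}{g^*}\,S_X(g^* u)=S_X(\Lambda f)$, where $H^2(X)$ is identified with its boundary trace in $L^2(\Gamma_X)$. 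The cleanest way to make this rigorous is to carry out the computation first for $f\in L^2(\Gamma_\Omega)\cap L^p(\Gamma_\Omega)$, where $g^*\cdot u\in L^2(\Gamma_X)$ and $S_X(g^* u)\in H^2(X)$ genuinely has the required boundary values, and then extend to all of $L^p$ by density.

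With these two ingredients the conclusion is immediate from the commuting square: $\Lambda$ is an isometric isomorphism of $L^p(\Gamma_\Omega)$ onto $L^p(\Gamma_X, w^{1-\frac{p}{2}})^{G}$ carrying $S_\Omega$ to $S_X$ restricted to $G$-invariant functions. Hence for $f\in L^2\cap L^p$ one has $\|S_\Omega f\|_{p,\Omega}=|G|^{-1/p}\|S_X(\Lambda f)\|_{L^p(w^{1-p/2})}$ together with $\|\Lambda f\|_{L^p(w^{1-p/2})}=|G|^{1/p}\|f\|_{p,\Omega}$, so a uniform bound for $S_X$ on $G$-invariant functions yields one for $S_\Omega$, and conversely every $G$-invariant $h$ equals $\Lambda f$ for some $f\in L^p(\Gamma_\Omega)$, giving the reverse implication. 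Lemma \ref{kernelinvariant} guarantees that $S_X$ preserves $G$-invariance, so the restricted operator indeed lands in the $G$-invariant part of $H^p(X, w^{1-\frac{p}{2}})$, and the two range spaces match under $\Lambda$.

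The argument is essentially a clean change of variables, so there is no deep obstacle; the only points demanding genuine care are (i) the passage between interior holomorphic functions and their boundary traces in the intertwining identity — in particular justifying $(1/g)^*=1/g^*$ and the cancellation of $g^*$ — and (ii) the density argument extending the $L^2$ transformation formula of Theorem \ref{invariant} to the full range $1<p<\infty$, which is where one must verify that $g^*\cdot(f\circ\Phi)$ lies in the space on which $S_X$ is a priori defined.
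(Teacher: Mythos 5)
Your proposal is correct and follows essentially the same route as the paper: the paper also sets $h=g^*\cdot(f\circ\Phi)$, verifies via the change of variables that $\|f\|_{L^p(\Gamma_\Omega)}^p$ and $\|S_\Omega f\|_{p,\Omega}^p$ equal (up to the factor $|G|$) the $L^p(\Gamma_X,w^{1-\frac{p}{2}})$-norms of $h$ and $S_X(h)$ respectively, and invokes Theorem \ref{invariant} for the identity $S_\Omega(f)=\frac{1}{|G|}\Phi_*\bigl(\frac{1}{g}S_X(h)\bigr)$. Your packaging of this as an intertwining isometry $\Lambda$, together with the explicit attention to surjectivity onto the $G$-invariant subspace and the $L^2\cap L^p$ density step, only makes explicit what the paper leaves implicit.
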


\begin{proof}
For $f\in L^p(\Gamma_{\Omega})$, then $h=g^*\cdot f\circ\Phi$ is a $G$-invariant function on $\Gamma_X$. The $L^p(\Gamma_{\Omega})$-norm of $f$ is
\[
|G| \cdot \int_{\Gamma_{\Omega}}|f|^pdm_{\Gamma_{\Omega}}=\int_{\Gamma_X}|g^*\cdot f\circ\Phi|^pw^{1-\frac{p}{2}}d\sigma=\int_{\Gamma_X}|h|^pw^{1-\frac{p}{2}}d\sigma,
\]
which is the $L^p(\Gamma_X,w^{1-\frac{p}{2}})$-norm of $h$.
 By Theorem \ref{invariant}, $S_{\Omega}(f)$ can be written as
\[
S_{\Omega}(f)=\frac{1}{|G|} \Phi_* \left( \frac{1}{g(z)}\int_{\Gamma_X}K_X(z,\zeta)h(\zeta)d\sigma(\zeta)\right)=\frac{1}{|G|} \Phi_* \left(\frac{1}{g(z)}S_X(h)(z) \right),
\]
whose $H^p(\Omega)$-norm by \eqref{h2iso} can be written as
\[
|G| \cdot\|S_{\Omega}(f)\|_{p,\Omega}^p= \int_{\Gamma_X}|g^*(z)|^{-p}|S_X(h)(z)|^pw(z)d\sigma(z)=\int_{\Gamma_X}|S_X(h)|^pw^{1-\frac{p}{2}}d\sigma.
\]
This is the $L^p(\Gamma_X,w^{1-\frac{p}{2}})$-norm of $S_X(h)$. Therefore,
\[
\|S_{\Omega}(f)\|_{p,\Omega}^p\le C\int_{\Gamma_{\Omega}}|f|^pdm_{\Gamma_{\Omega}}
\]
if and only if
\[
\int_{\Gamma_X}|S_X(h)|^pw^{1-\frac{p}{2}}d\sigma\le C\int_{\Gamma_X}|h|^pw^{1-\frac{p}{2}}d\sigma.
\]
\end{proof}

 \begin{remark}
When $\Gamma_{\Omega}$ is at least $C^2$-smooth, the density function $w$ is bounded away from zero on $\Gamma_X$. With the choice of $g\in H^{\infty}(X)$, the Hardy spaces $H^p(\Omega)$ defined in Definition \ref{Hpspace} and the Szeg\H{o} projection $S_{\Omega}$ defined in Definition \ref{defszego} are compatible with those defined in the classical sense.
\end{remark}

\begin{remark}
While the Hardy spaces $H^p(\Omega)$ and the Szeg\H{o} projection $S_{\Omega}$ depend on  the choice of $g$, the $L^p$ regularity of the Szeg\H{o} projection actually does not. As a result, when studying the $L^p$ regularity of the Szeg\H{o} projection, it is sufficient to consider one $g$.
\end{remark}


\section{Quotient domains of the polydisc}\label{SecDn}

In this section, the theory in \S\ref{generalsetup} is applied to the model domain $X=\D^n$. In the first subsection, by imposing an additional restriction on $G$, we derive a sufficient condition on the density function in order for the quadruple $(\D^n, \Omega, \Phi, G)$ to be admissible. In the second subsection, we apply the theory to study the $L^p$ regularity of the Szeg\H{o} projection on the symmetrized bidisc.

\subsection{Admissible quadruple for quotient domains of $\D^n$}

\begin{definition}
An element $\tau\in G$ is a rotation if there is a $\beta=(\beta_1,\dots,\beta_n)$ such that
\[
\tau z=(e^{i\beta_1}z_1,\dots,e^{i\beta_n}z_n)
\]
for any $z=(z_1, \dots, z_n)\in\C^n$.
\end{definition}

\begin{remark}
Since $G$ is finite, $\beta_j=\frac{2k_j\pi}{|G|}$ for some $k_j\in\Z^+$ for $j=1,\dots,n$.
\end{remark}

\begin{definition}
An element $\tau\in G$ is a permutation if there is a permutation $\kappa$ of $\{1,\dots,n\}$ such that
\[
\tau z=(z_{\kappa(1)},\dots,z_{\kappa(n)})
\]
for any $z\in\C^n$.
\end{definition}

Assume that the quadruple $(\D^n, \Omega, \Phi, G)$ satisfies Definition \ref{partbdry}.  In this section, we furthermore assume $G$ to be a finite subgroup of $U(n)$ generated by rotations and permutations.

\begin{lemma}\label{FseriesGin}
The Fourier series of a $G$-invariant integrable function on $\T^n$ must also be $G$-invariant. 
\end{lemma}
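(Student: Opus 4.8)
The plan is to reduce the statement to a transformation law for the Fourier coefficients and then read the invariance of the formal series off that law. First I would record how $G$ acts on monomials: for $z\in\T^n$ and a multi-index $\alpha\in\Z^n$, each generator $\tau$ sends $z^\alpha$ to a unimodular multiple of another monomial. Concretely, if $\tau$ is a rotation $\tau z=(e^{i\beta_1}z_1,\dots,e^{i\beta_n}z_n)$ then $(\tau z)^\alpha=e^{i\beta\cdot\alpha}z^\alpha$, while if $\tau$ is a permutation $\tau z=(z_{\kappa(1)},\dots,z_{\kappa(n)})$ then $(\tau z)^\alpha=z^{\kappa\cdot\alpha}$, where $\kappa\cdot\alpha$ is the induced permutation of the multi-index. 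Thus for every $\tau\in G$ there is a unimodular scalar $\chi(\tau,\alpha)$ and a bijection $\alpha\mapsto\tau\cdot\alpha$ of $\Z^n$ with $(\tau z)^\alpha=\chi(\tau,\alpha)\,z^{\tau\cdot\alpha}$. Since $G$ is generated by rotations and permutations, it suffices to verify everything on these two kinds of generators, where either $\chi\equiv1$ or $\tau\cdot\alpha=\alpha$, which keeps the bookkeeping transparent.

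Next I would compute the Fourier coefficients $c_\alpha=\int_{\T^n}f(z)\overline{z^\alpha}\,d\sigma(z)$ (the normalization is irrelevant) using the $G$-invariance of $f$ together with the invariance of $d\sigma$ under $U(n)$ supplied by Corollary \ref{leb}. Changing variables $z\mapsto\tau z$, invoking $g^*\sigma=\sigma$, and using $f(\tau z)=f(z)$ yields the transformation law $c_{\tau\cdot\alpha}=\chi(\tau,\alpha)\,c_\alpha$ for all $\tau\in G$ and $\alpha\in\Z^n$. This is the heart of the matter: $G$-invariance of the function is exactly the statement that its coefficients are compatible in this way along each $G$-orbit of multi-indices.

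Finally I would feed this law back into the series. Applying a generator $\tau$ termwise to $\sum_\alpha c_\alpha z^\alpha$ and re-indexing by $\gamma=\tau\cdot\alpha$ gives $\sum_\gamma c_{\tau^{-1}\cdot\gamma}\,\chi(\tau,\tau^{-1}\cdot\gamma)\,z^\gamma$, and the transformation law, applied with $\alpha=\tau^{-1}\cdot\gamma$, collapses each coefficient back to $c_\gamma$, so the formal series is unchanged. Equivalently, since permutations and rotations preserve the index sets $\{\alpha:\max_j|\alpha_j|\le N\}$, the associated symmetric partial sums are genuine $G$-invariant trigonometric polynomials, and one may pass to the limit.

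I expect the only real subtlety to be pinning down the precise meaning of ``the Fourier series is $G$-invariant''—interpreted termwise, or through $G$-respecting partial sums, rather than as pointwise equality of a possibly divergent series—and being careful that the cocycle $\chi(\tau,\alpha)$ and the index bijection $\tau\cdot\alpha$ are composed in the correct order. Reducing to the two kinds of generators removes essentially all of this difficulty, since then one of the two ingredients is trivial in each case.
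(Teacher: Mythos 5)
Your proposal is correct and follows essentially the same route as the paper: reduce to the two kinds of generators, derive the transformation law of the Fourier coefficients from the invariance of $f$ (and of the measure) via a change of variables, and then re-index the series termwise. The cocycle notation $\chi(\tau,\alpha)$ merely packages the two cases (where the paper writes $\hat f(k)=e^{ik\cdot\beta}\hat f(k)$ for rotations and permutes the index set for permutations) into one formula.
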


\begin{proof}
Assume that $f$ is invariant under a permutation $\tau_1$. Namely, $f(\tau_1 \theta)=f(\theta)$ for any $\theta\in \T^n$. Its Fourier coefficient
\[
\hat f(k)=\int_{\T^n}f(\theta)e^{-ik\cdot\theta}d\theta
\]
is also invariant under the same permutation on $\Z^n$. As a result, its Fourier series
\[
S_f(\theta)=\sum_{k\in\Z^n}\hat f(k)e^{ik\cdot\theta}
\]
is invariant under $\tau_1$.

Assume $f$ is invariant under a rotation $\tau_2$. Namely, $f(\theta+\beta)=f(\theta)$ for some $\beta\in\T^n$. Then its Fourier coefficients satisfy
\[
\hat f(k)=\int_{\T^n}f(\theta)e^{-ik\cdot\theta}d\theta=\int_{\T^n}f(\theta+\beta)e^{-ik\cdot\theta}d(\theta+\beta)=e^{ik\cdot\beta}\hat f(k).
\]
So its Fourier series satisfies
\[
S_f(\theta)=\sum_{k\in\Z^n}\hat f(k)e^{ik\cdot\theta}=\sum_{k\in\Z^n}e^{ik\cdot\beta}\hat f(k)e^{ik\cdot\theta}=S_f(\theta+\beta),
\]
which implies the invariance of $S_f(\theta)$ under $\tau_2$. This completes the proof, since $G$ is generated by rotations and permutations.
\end{proof}

The next result is one of the key theorems in the paper and it provides a sufficient condition for the study of the Szeg\H{o} projection on quotient domains of $\D^n$.
More precisely, it reduces the verification of the quatertriple $(\D^n, \Omega, \Phi, G)$ to be admissible to an integrability condition on the density function. The proof is based on an idea in \cite{R69}.

\begin{theorem}\label{normsquare}
Let $w$ be a real-valued, positive almost everywhere, $G$-invariant, continuous function on $\T^n$. If $\log w\in L^1(\T^n)$,  then $w$ is admissible. 
\end{theorem}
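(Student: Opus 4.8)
The plan is to transplant to the torus $\T^n$ the one-variable Herglotz/outer-function construction recalled after Problem~\ref{CY}, deducing the $G$-invariance of $g$ from Lemma~\ref{FseriesGin} and the boundary identity $|g^*|^2=w$ from a Fatou theorem. Set $u=\tfrac12\log w$; by hypothesis $u\in L^1(\T^n)$ is real-valued, and being a fixed multiple of $\log w$ it is $G$-invariant. It suffices to produce a zero-free holomorphic $g$ on $\D^n$ whose non-tangential limit satisfies $\log|g^*|=u$ almost everywhere, since then $|g^*|^2=e^{2u}=w$ and $w$ is admissible in the sense of Definition~\ref{con}. Equivalently, I must find a pluriharmonic function on $\D^n$ with non-tangential boundary value $u$ and take $g$ to be the exponential of its holomorphic primitive.

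The natural candidate comes from the Fourier series $u=\sum_{k\in\Z^n}c_k e^{ik\cdot t}$, with $c_{-k}=\overline{c_k}$. Following the Fourier-analytic idea of \cite{R69}, I would set
\begin{equation}\notag
F(z)=c_0+2\sum_{0\neq k\in\Z_{\ge0}^n}c_k z^{k},\qquad g(z)=\exp F(z),
\end{equation}
which is holomorphic and zero-free on $\D^n$ and is the Fourier incarnation of the $n$-fold Herglotz integral of $u$. Evaluating at $rz$ and letting $r\to1^-$ (Abel summation), together with the Fatou theorem on $\T^n$, produces the non-tangential limit $g^*$, and the problem reduces to verifying that $|g^*|^2=w$ almost everywhere.

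For the $G$-invariance I would invoke Lemma~\ref{FseriesGin}: the invariance of $u$ forces $c_{\kappa k}=c_k$ for each permutation and $c_k=e^{ik\cdot\beta}c_k$ for each rotation generating $G$, so that $c_k=0$ unless $e^{ik\cdot\beta}=1$. Since permutations preserve the index set $\Z_{\ge0}^n$ and rotations act on $F$ only through the factors $e^{ik\cdot\beta}$, these relations pass to the completed series, giving $F\circ\tau=F$ and hence $g\circ\tau=g$ for every generator $\tau$ of $G$. This is precisely the role for which Lemma~\ref{FseriesGin} was isolated.

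The step I expect to be the main obstacle is the boundary identity $|g^*|^2=w$, and here the several-variable nature of the problem is decisive. For $n\ge2$ the product Poisson extension of $u$ is only separately harmonic, not pluriharmonic --- equivalently, the real part of the product Herglotz kernel is not the product Poisson kernel --- so the boundary value of $\Re F$ returns $u$ only up to the contribution of the mixed-sign frequencies $k\notin\Z_{\ge0}^n\cup\Z_{\le0}^n$ that the analytic completion discards. Showing that $g$ nonetheless attains $|g^*|^2=w$ is the heart of the matter: it is where the hypotheses $\log w\in L^1$ and the $G$-invariance must be used in full, and where the clean one-variable picture (in which the Poisson kernel makes the identity immediate) genuinely breaks down. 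I would expect this boundary analysis, rather than the construction or the invariance, to absorb the bulk of the work.
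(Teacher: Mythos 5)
You have correctly diagnosed where the difficulty lies, but the proposal stops exactly at the point where the paper's proof actually begins, and the construction as you have written it does not work. If you set $F(z)=c_0+2\sum_{0\neq k\in\Z_{\ge0}^n}c_kz^k$ and $g=e^F$, then the boundary values of $\Re F$ recover only the frequencies of $u=\tfrac12\log w$ lying in $Y_n=\Z_+^n\cup(-\Z_+^n)$; the mixed-sign coefficients $c_k$ with $k\notin Y_n$ are simply discarded, and for a general $G$-invariant $w$ with $\log w\in L^1(\T^n)$ these do not vanish. Hence $|g^*|^2=\exp\bigl(2\sum_{k\in Y_n}c_ke^{ik\cdot\theta}\bigr)\neq w$ in general, and no amount of "boundary analysis" of this particular $g$ will repair that: the function you built is the wrong function. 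The hypotheses $\log w\in L^1$ and $G$-invariance alone cannot force $\hat u(k)=0$ off $Y_n$.

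The missing idea, which is the actual content of the paper's proof, is Rudin's singular-measure correction (\cite[Theorem 2.4.2]{R69}): one constructs a real singular measure $\mu$ on $\T^n$ whose Fourier coefficients agree with those of $\tfrac12\log w$ \emph{outside} $Y_n$, so that $f=\tfrac12\log w-d\mu$ has all Fourier coefficients supported in $Y_n$ and its Poisson integral is the real part of a holomorphic $h$; since $\mu$ is singular, $P(d\mu)$ has radial limit $0$ a.e., so $\Re(h^*)=\tfrac12\log w$ a.e.\ and $g=e^h$ works. Concretely, $\mu$ is built by writing $\log^{\pm}w$ as sums of positive trigonometric polynomials $\varphi_i$ and pairing each $\varphi_i$ with the Haar measure of a finite subgroup $H=\{\theta:e^{iv\cdot\theta}=1\}$ for a suitably large $v$, chosen so that the translates $S-jv$ of the spectrum of $\varphi_i$ avoid $0$ and land in $Y_n$. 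An additional point your proposal does not address is that this $\mu$ must be made $G$-invariant (the paper takes $v$ a multiple of $|G|\cdot(1,\dots,1)$ so that $H$, hence $\mu_H$, is preserved by the rotations and permutations, and uses a Baire-type decomposition of the push-forward to $\Gamma_\Omega$ to keep each $\varphi_i$ $G$-invariant). Your treatment of the $G$-invariance of the analytic completion via Lemma \ref{FseriesGin} is fine as far as it goes, but without the singular measure the theorem is not proved.
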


\begin{proof}
Let $P$ be the Poisson integral on $\T^n$.
For the extended real-valued $G$-invariant continuous function $\frac{1}{2}\log w$, we claim that there is a real $G$-invariant singular measure $\mu$ on $\T^n$ (with respect to the Lebesgue measure) such that
\[
u:=P\left(\frac{1}{2}\log w-d\mu\right)
\]
is the real part of some holomorphic function $h$ on $\D^n$. By \cite[Theorem 2.3.1]{R69}, the radial limit $u^*=\frac{1}{2}\log w$ almost everywhere on $\T^n$.

To construct $\mu$, one follows the argument from \cite[Theorem 2.4.2 and Exercise 2.4.3]{R69}. For a $G$-invariant nonnegative trigonometric polynomial $\varphi$ on $\T^n$, $\hat \varphi(k)=0$ except for $k$ in some finite set $S\subset\Z^n$.  Let $v$ be a sufficiently large multiple of $|G|\cdot(1,\dots,1)$ in $\Z^n$, so that
\[
0\notin S-jv\subset Y_n:=\Z_+^n\cup(-\Z_+^n)
\]
for $j=\pm1,\pm2,\dots$, where $\Z_+^n$ is the $n$-Cartesian product of non-negative integers. Let $H$ be the subgroup of $\T^n$ consisting all $\theta$ for which $e^{iv\cdot\theta}=1$. Let $\mu_H$ be the Haar measure of $H$ and $d\mu=\varphi d\mu_H$. So $\mu\ge0$ is singular, since $H$ is in lower dimension.  By the choice of $v$, $H$ is invariant by any permutation and rotation. So the Haar measure $\mu_H$ and hence $\mu$ is $G$-invariant.  Moreover, since $\varphi$ is a trigonometric polynomial,
\[
\hat \mu(m)=\sum_{k\in S}\hat \varphi(k)\int_{\T^n}e^{i(k-m)\cdot\theta}d\mu_H.
\]
Note that by construction of $H$ and $\mu_H$,
\[
\int_{\T^n}e^{i(k-m)\cdot\theta}d\mu_H=\left\{\begin{array}{cl} 1, & \text{if}\,\,\,k-m=jv\,\,\,\text{for some}\,\,\,j\in\Z \\ 0, & \text{otherwise} \end{array}\right..
\]
Therefore,
\[
\hat \mu(m)=\sum_{j\in\Z}\hat \varphi(m+jv)
\]
for $m+jv\in S$. So when $m\notin Y_n$, it must be $j=0$, which gives $\hat \mu(m)=\hat\varphi(m)$ and $\|\mu\|=\hat\mu(0)=\hat\varphi(0)=\|\varphi\|_{L^1}$. 

Now look at $\frac{1}{2}\log w\in L^1(\T^n)$, and write $\frac{1}{2}\log w=\phi_0-\phi_1$, where $\phi_0=1+\frac{1}{2}\log^+w$ is $G$-invariant, positive, continuous and $\phi_1=1+\frac{1}{2}\log^-w$ is $G$-invariant, integrable, continuous with values in $\R^+\cup\{\infty\}$. As in the proof of \cite[Theorem 2.4.2 and Exercise 2.4.3]{R69}, write $\phi_1=\sum_jp_j$, where each $p_j$ is a $G$-invariant positive continuous function. This can be done: since $\phi_1$ is $G$-invariant, the push-forward $\Phi_*(\phi_1)$ is a positive lower semicontinuous function on $\Gamma_{\Omega}$; it can be written as a sum of positive continuous functions by a theorem of Baire on $\Gamma_{\Omega}$; the $G$-invariant $p_j$ are obtained by the pulling the functions back on $\T^n$ via $\Phi^*$. Moreover, for each $p_j$, it can be written as $p_j=\sum_ip_{ji}$, where by Lemma \ref{FseriesGin} each $p_{ji}$ is a $G$-invariant positive trigonometric polynomial. Hence, 
\[
\phi_1=\sum_jp_j=\sum_i\left(\sum_jp_{ji}\right)=\sum_i\varphi_i,
\]
a sum of positive trigonometric polynomials. For each $\varphi_i$, follow the preceding construction to obtain a nonnegative $G$-invariant singular measure $\mu_i$. Since 
\[
\sum_i\|\mu_i\|=\sum_i\|\varphi_i\|_{L^1}=\|\phi_1\|_{L^1}<\infty,
\]
the sume of the measures $\sum_i\mu_i$ converges to a nonnegative $G$-invariant singular measure $\lambda_1$. A limit argument shows
\[
\hat \lambda_1(m)=\sum_{i}\hat\mu_i(m)=\sum_{i}\hat\varphi_i(m)=\hat \phi_1(m),
\]
for $m\notin Y_n$. Hence the Fourier coefficients of $\phi_1-d\lambda_1$ are all 0 outside $Y_n$. Similarly, $\phi_0$ can be written as a sum of positive trigonometric polynomials, and there is a nonnegative $G$-invariant singular measure $\lambda_0$ for $\phi_0$, which satisfies the same property.

Let $\mu=\lambda_0-\lambda_1$ and $f=\frac{1}{2}\log w-d\mu$ on $\T^n$. Since $w$ and $\mu$ are $G$-invariant, so is $f$. By the definition of Poisson integral,
\[
u=P(f)=\sum_{k\in Y_n}\hat f(k)r_1^{|k_1|}\cdots r_n^{|k_n|}e^{ik\cdot\theta},
\]
where $\hat f$ is the Fourier coefficient of $f$.  Same argument as in Lemma \ref{FseriesGin} shows that $u$ is $G$-invariant. Following the argument in \cite[Theorem 2.1.4]{R69}, one defines the $G$-invariant holomorphic function
\[
h=\hat f(0)+\sum_{0\neq k\in\Z^n_+}[\hat f(k)+\overline{\hat f(-k)}]r_1^{k_1}\cdots r_n^{k_n}e^{ik\cdot\theta}=\hat f(0)+\sum_{0\neq k\in\Z^n_+}2\hat f(k)z^k
\]
satisfying $\Re(h)=u$. Here $f$ is real, so $\hat f(k)=\overline{\hat f(-k)}$. 

Let $g=e^h$. Then $g$ is the desired non-zero $G$-invariant holomorphic function on $\D^n$, since $|g|=e^{\Re(h)}=e^u$.
\end{proof}

\subsection{The Szeg\H{o} projection on the symmetrized bidisc}


Let $X=\D^2$, $\Phi(z)=(z_1+z_2,z_1z_2)$, and $\Omega=\Sb^2$ be the symmetrized bidisc. Consider $\Gamma_{\D^2}=\T^2$ the distinguished boundary of the bidisc, then $\Gamma_{\Sb^2}=\Phi(\T^2)$ is the induced boundary. The group $G$ is simply $S_2$, the permutations of $(z_1,z_2)$.

For $j=1,2$, $z_j=e^{i\theta_j}$ on $\T$, where $\theta_j\in[-\pi,\pi]$. Then the straightforward calculation yields
\[
J_{\R}Z=\left(\begin{array}{cc} \frac{\partial z_1}{\partial \theta_1} & \frac{\partial z_2}{\partial \theta_1} \\ \frac{\partial z_1}{\partial \theta_2} & \frac{\partial z_2}{\partial \theta_2} \end{array}\right)=\left(\begin{array}{cc} iz_1 \\ & iz_2 \end{array}\right),
\]
\[
J_{\C}\Phi=\left(\begin{array}{cc} \frac{\partial \phi_1}{\partial z_1} & \frac{\partial \phi_2}{\partial z_1} \\ \frac{\partial \phi_1}{\partial z_2} & \frac{\partial \phi_2}{\partial z_2} \end{array}\right)=\left(\begin{array}{cc} 1 & z_2 \\ 1 & z_1 \end{array}\right),
\]
and by \eqref{matrixA}
\[
A=\left(\begin{array}{cc} iz_1 \\ & iz_2 \end{array}\right)\cdot\left(\begin{array}{cc} 1 & z_2 \\ 1 & z_1 \end{array}\right)=\left(\begin{array}{cc} iz_1 & iz_1z_2 \\ iz_2 & iz_1z_2 \end{array}\right),
\]
implying
\[
A\overline{A^t}=\left(\begin{array}{cc} iz_1 & iz_1z_2 \\ iz_2 & iz_1z_2 \end{array}\right)\left(\begin{array}{cc} -i\overline{z_1} & -i\overline{z_2} \\ -i\overline{z_1}\overline{z_2} & -i\overline{z_1}\overline{z_2} \end{array}\right)=\left(\begin{array}{cc} 2 & z_1\overline{z_2}+1 \\ \overline{z_1}z_2+1 & 2 \end{array}\right)
\]
and
\[
\Re(A\overline{A^t})=\left(\begin{array}{cc} 2 & 1+\cos(\theta_1-\theta_2) \\ 1+\cos(\theta_1-\theta_2) & 2 \end{array}\right).
\]
Hence, by Proposition \ref{den}, one obtains the density function 
\begin{equation}
\label{bidiscJ}
\begin{split}
w=\sqrt{\det[\Re(A\overline{A^t})]}
&=\sqrt{4-[1+\cos(\theta_1-\theta_2)]^2}\\
&=\sqrt{2-2\cos(\theta_1-\theta_2)+\sin^2(\theta_1-\theta_2)}.
\end{split}
\end{equation}

\begin{proposition}
\label{bidiscallow}
The density function $w=\sqrt{2-2\cos(\theta_1-\theta_2)+\sin^2(\theta_1-\theta_2)}$ on $\T^2$ is admissible. 
\end{proposition}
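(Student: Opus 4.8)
The plan is to verify that $w$ satisfies the hypotheses of Theorem \ref{normsquare}, so that admissibility follows at once; the only substantive point will be the integrability $\log w \in L^1(\T^2)$.

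First I would rewrite the density in a single angular variable. Setting $t=\theta_1-\theta_2$ and using the half-angle identities $2-2\cos t = 4\sin^2(t/2)$ and $\sin^2 t = 4\sin^2(t/2)\cos^2(t/2)$, the expression \eqref{bidiscJ} factors as
\[
w = 2\,\bigl|\sin(t/2)\bigr|\,\sqrt{1+\cos^2(t/2)}.
\]
The factor $\sqrt{1+\cos^2(t/2)}$ lies in $[1,\sqrt{2}]$, hence is bounded and bounded away from zero. Thus $w$ is continuous on $\T^2$, and it vanishes precisely on the diagonal $\{\theta_1=\theta_2\}$, a set of measure zero; in particular $w>0$ almost everywhere, as already guaranteed by Proposition \ref{den}. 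The $G$-invariance (here $G=S_2$) is immediate, since $w$ depends only on $\cos t$ and $\sin^2 t$, both unchanged under the swap $\theta_1\leftrightarrow\theta_2$, i.e.\ under $t\mapsto -t$.

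It remains to prove $\log w \in L^1(\T^2)$. Taking logarithms in the factored form,
\[
\log w = \log 2 + \log\bigl|\sin(t/2)\bigr| + \tfrac{1}{2}\log\bigl(1+\cos^2(t/2)\bigr),
\]
where the first and third terms are bounded. Since the integrand depends only on $t=\theta_1-\theta_2$, Fubini's theorem together with the translation invariance of the torus reduces $\int_{\T^2}|\log w|\,d\theta_1\,d\theta_2$ to a constant multiple of the one-dimensional integral $\int_{-\pi}^{\pi}\bigl|\log|\sin(t/2)|\bigr|\,dt$. This is the main (and essentially only) obstacle: one must control the logarithmic singularity of $\log|\sin(t/2)|$ at $t=0$. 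But there $\sin(t/2)\sim t/2$, so the integrand behaves like $|\log|t||$, which is integrable; more precisely the classical value $\int_0^{\pi}\log\sin x\,dx=-\pi\log 2$ shows $\log|\sin|\in L^1$ over a period, and the substitution $x=t/2$ transfers this to the integral at hand. Hence $\log w \in L^1(\T^2)$, and Theorem \ref{normsquare} applies to yield the admissibility of $w$. I expect the whole argument to be routine once the factorization is in place; the factorization itself is the step that makes the otherwise opaque singularity structure of \eqref{bidiscJ} transparent.
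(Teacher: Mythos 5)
Your proof is correct and follows essentially the same route as the paper's: verify the hypotheses of Theorem \ref{normsquare}, reduce via Fubini to a one-variable integral in $t=\theta_1-\theta_2$, and control the logarithmic singularity at $t=0$. The only (cosmetic) difference is that you isolate the singularity by the exact factorization $w=2|\sin(t/2)|\sqrt{1+\cos^2(t/2)}$, whereas the paper gets the same conclusion from the Taylor expansion $\log u(x)\approx\log|x|$ near $x=0$.
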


\begin{proof}
For $x\in\T$, let
\[
u(x)=\sqrt{2-2\cos x+\sin^2x}=\sqrt{3-2\cos x-\cos^2x}.
\]
Then $w(\theta_1,\theta_2)=u(\theta_1-\theta_2)$ is $G$-invariant and
\[
\int_{\T^2}|\log w|d\theta_1d\theta_2=\int_{\T^2}|\log u(\theta_1-\theta_2)|d\theta_1d\theta_2=\int_{\T}d\theta_2\int_{-\pi}^{\pi}|\log u(x)|dx.
\]

Note that by continuity, $\log u$ is integrable when $x$ is away from $0$. By the Taylor expansion of cosine about $0$, one can verify that $\log u(x)\approx\log |x|$ is integrable when $x$ is around $0$.  So $\log w\in L^1(\T^2)$, and $w$ is admissible by Theorem \ref{normsquare}.
\end{proof}

We recall some results in order to prove the $L^p$ regularity of the Szeg\H{o} projection $S_{\Sb^2}$. We also call the the density function with respect to the volume form $d\theta_1d\theta_2$ the weight function. 

\begin{lemma}
\label{compareweight}
Let $A$ be a subset of $L^p(\Gamma_X,w^{1-\frac{p}{2}})$.
If $w\approx\tilde{w}$, then $S_X$ is bounded on the set $A$  if and only if it is bounded on $A$ in $L^p(\Gamma_X,\tilde{w}^{1-\frac{p}{2}})$.
\end{lemma}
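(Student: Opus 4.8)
The plan is to reduce the statement to the elementary observation that comparable weights induce comparable weighted $L^p$ norms, so that the two weighted spaces coincide as sets of functions carrying equivalent norms, while the operator $S_X$ itself is left unchanged. Throughout, $w \approx \tilde{w}$ means there are constants $0 < c \le C$ with $c\tilde{w} \le w \le C\tilde{w}$ almost everywhere on $\Gamma_X$.

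First I would pass from $w \approx \tilde{w}$ to the comparability of the actual weights $w^{1-\frac{p}{2}}$ and $\tilde{w}^{1-\frac{p}{2}}$ appearing in Theorem \ref{main1}. Since $t \mapsto t^{1-\frac{p}{2}}$ is monotone on $(0,\infty)$, raising the two-sided bound through this map preserves comparability: one obtains constants $c', C'$, depending only on $c, C, p$, with $c'\,\tilde{w}^{1-\frac{p}{2}} \le w^{1-\frac{p}{2}} \le C'\,\tilde{w}^{1-\frac{p}{2}}$ almost everywhere. Consequently, for every measurable $h$,
\[
\int_{\Gamma_X} |h|^p\, w^{1-\frac{p}{2}}\, d\sigma \approx \int_{\Gamma_X} |h|^p\, \tilde{w}^{1-\frac{p}{2}}\, d\sigma,
\]
so the norms of $L^p(\Gamma_X, w^{1-\frac{p}{2}})$ and $L^p(\Gamma_X, \tilde{w}^{1-\frac{p}{2}})$ are equivalent; in particular the two weighted spaces consist of exactly the same functions, and $A$ is a common subset of both.

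Finally, because $S_X$ is defined by integration of the Szeg\H{o} kernel against $d\sigma$ and does not depend on the weight, boundedness of $S_X$ on $A$ is an estimate of the form $\|S_X h\| \le M\|h\|$ for $h \in A$. Applying the norm equivalence on both the domain and the target sides transfers such a bound from one weighted norm to the other, at the cost of replacing $M$ by $M C'/c'$ (and the reverse implication is symmetric). I expect no genuine obstacle here: the only point demanding a moment of care is the sign of the exponent $1-\frac{p}{2}$, which is positive for $p<2$ and negative for $p>2$, but in either case the two-sided comparison survives, with the roles of the lower and upper constants possibly interchanged. The lemma is thus a bookkeeping device that will later let us replace the geometrically natural density $w$ by any comparable, more tractable weight $\tilde{w}$ when verifying the $L^p$ regularity criterion of Theorem \ref{main1}.
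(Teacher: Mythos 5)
Your argument is correct and is essentially the paper's own proof: the paper likewise observes that comparable weights give the same weighted $L^p$ space with equivalent norms and then transfers the operator bound through a three-step chain of inequalities. Your extra remark about the sign of the exponent $1-\frac{p}{2}$ is a valid point of care that the paper leaves implicit, but it does not change the substance of the argument.
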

\begin{proof}
Since the two weights are comparable, the two $L^p$ spaces are the same. Moreover,
\[
\int_{\Gamma_X}|S_X(f)|^pw^{1-\frac{p}{2}}d\sigma\lesssim\int_{\Gamma_X}|S_X(f)|^p\tilde{w}^{1-\frac{p}{2}}d\sigma\lesssim\int_{\Gamma_X}|f|^p\tilde{w}^{1-\frac{p}{2}}d\sigma\lesssim\int_{\Gamma_X}|f|^pw^{1-\frac{p}{2}}d\sigma,
\]
for any $f \in A$. The other direction follows from a similar argument.
\end{proof}

\begin{definition}
Use $I$ to denote intervals in $\T$.
For $1<p<\infty$, a weight $\mu$ on $\T$ is in $A_p(\T)$ if
\[
\sup_{I}\left(\frac{1}{|I|}\int_I\mu(\theta)d\theta\right)\left(\frac{1}{|I|}\int_I\mu(\theta)^{-\frac{1}{p-1}}d\theta\right)^{p-1}<\infty.
\]
\end{definition}

\begin{theorem}[Theorem 3 in \cite{MZ15}]
\label{AponT}
For $1<p<\infty$, the Szeg\H{o} projection $S_{\D}$ on $\D$ is bounded on $L^p(\T,\mu)$ if and only if $\mu\in A_p(\T)$.
\end{theorem}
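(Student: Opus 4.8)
The plan is to identify the Szeg\H{o} projection $S_\D$ on the circle with the Riesz (Cauchy--Szeg\H{o}) projection onto nonnegative frequencies, and then to reduce its weighted boundedness to that of the conjugate function (Hilbert transform) $H$ on $\T$, for which the $A_p$ characterization is exactly the Hunt--Muckenhoupt--Wheeden theorem. In terms of Fourier series, for $f=\sum_{k\in\Z}\hat f(k)e^{ik\theta}$ one has $S_\D f=\sum_{k\ge 0}\hat f(k)e^{ik\theta}$, while $H$ is the multiplier operator $\widehat{Hf}(k)=-i\,\mathrm{sgn}(k)\hat f(k)$ and $A_0$ denotes the averaging operator $A_0 f=\hat f(0)$. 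Writing $P_+:=S_\D$ and letting $P_{\le 0}f:=\overline{P_+\overline f}$ be the projection onto nonpositive frequencies, I would first record, by comparing multipliers, the two identities
\begin{equation*}
P_++P_{\le 0}=I+A_0,\qquad P_+-P_{\le 0}=iH,
\end{equation*}
so that $P_+=\tfrac12\left(I+A_0+iH\right)$.

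For sufficiency, suppose $\mu\in A_p(\T)$. The Hunt--Muckenhoupt--Wheeden theorem gives the boundedness of $H$ on $L^p(\T,\mu)$. The defining $A_p$ inequality also forces $\mu\in L^1(\T)$ and $\mu^{-1/(p-1)}\in L^1(\T)$, which is precisely the pair of conditions making the rank-one averaging operator $A_0$ bounded on $L^p(\T,\mu)$ (estimate $|\hat f(0)|$ by H\"older against $\mu^{1/p}\mu^{-1/p}$). Hence each summand in $P_+=\tfrac12(I+A_0+iH)$ is bounded, and so is $S_\D$.

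For necessity, suppose $S_\D=P_+$ is bounded on $L^p(\T,\mu)$. Since complex conjugation is an isometry of $L^p(\T,\mu)$, the operator $P_{\le 0}=\overline{P_+\overline{\,\cdot\,}}$ is bounded as well. Subtracting the two boundedness statements, $iH=P_+-P_{\le 0}$ is bounded on $L^p(\T,\mu)$, and the Hunt--Muckenhoupt--Wheeden theorem then yields $\mu\in A_p(\T)$. Note this route sidesteps any separate discussion of $A_0$ in the necessity direction.

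The genuinely hard input is the Hunt--Muckenhoupt--Wheeden equivalence between the $A_p$ condition and the $L^p(\mu)$ boundedness of $H$; everything else is the elementary multiplier bookkeeping above. Its deep half is sufficiency ($A_p\Rightarrow H$ bounded), which rests on the reverse H\"older inequality for $A_p$ weights together with a Calder\'on--Zygmund good-$\lambda$ argument (or, in modern terms, sparse domination). In the present article this equivalence is invoked as a known theorem, so the proof reduces to the multiplier identities relating $P_+$, $P_{\le 0}$, $H$, and $A_0$ and to checking the integrability conditions extracted from $A_p$.
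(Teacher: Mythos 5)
Your argument is correct: the multiplier identities $P_+=\tfrac12(I+A_0+iH)$ and $iH=P_+-P_{\le 0}$ are verified by checking Fourier symbols, the $A_p$ condition applied to the whole circle gives $\mu\in L^1(\T)$ and $\mu^{-1/(p-1)}\in L^1(\T)$ (hence boundedness of the rank-one operator $A_0$), and both directions then follow from the two halves of the Hunt--Muckenhoupt--Wheeden theorem for the conjugate function. Note that the paper offers no proof of this statement -- it is imported verbatim from \cite{MZ15} -- and the reduction of the Riesz projection to the conjugate function plus the $A_p$ characterization is precisely the standard route taken in that reference, so your proof is essentially the intended one.
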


\begin{theorem}[Proposition 4 and Theorem 5 in \cite{MZ15}]
\label{intervalofponT}
For $\alpha\ge0$, $|z-1|^{\alpha(2-p)}\in A_p(\T)$ if and only if $p\in\left(\frac{2\alpha+1}{\alpha+1},\frac{2\alpha+1}{\alpha}\right)$.
\end{theorem}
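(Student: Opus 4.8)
The plan is to reduce the statement to the classical power-weight characterization ``$|\theta|^{\beta}\in A_p$ if and only if $-1<\beta<p-1$'' and then translate the resulting inequalities into the claimed range of $p$. First I would record that on $\T$, writing $z=e^{i\theta}$ with $\theta\in[-\pi,\pi]$, one has $|z-1|=2|\sin(\theta/2)|\approx|\theta|$, so the weight $\mu(\theta)=|z-1|^{\alpha(2-p)}$ is comparable to $|\theta|^{\beta}$ with $\beta:=\alpha(2-p)$, and its only possible singularity or degeneracy sits at $\theta=0$. Since the $A_p(\T)$ condition is insensitive to multiplying the weight by a function bounded above and below — both averages in the defining quantity then change only by bounded factors — it suffices to characterize when $|\theta|^{\beta}\in A_p(\T)$.

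For necessity I would test the $A_p$ quantity on the centered intervals $I_t=(-t,t)$ as $t\to0^+$. A direct computation gives $\frac{1}{|I_t|}\int_{I_t}|\theta|^{\beta}\,d\theta\approx t^{\beta}$ when $\beta>-1$ and $=+\infty$ otherwise, and likewise $\frac{1}{|I_t|}\int_{I_t}|\theta|^{-\beta/(p-1)}\,d\theta\approx t^{-\beta/(p-1)}$ when $\beta<p-1$ and $=+\infty$ otherwise. If either of $-1<\beta$ or $\beta<p-1$ fails, one average is infinite while the other stays finite and nonzero, forcing the $A_p$ product to be infinite; hence $\mu\notin A_p(\T)$.

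For sufficiency, assume $-1<\beta<p-1$. On intervals $I$ whose closure avoids a fixed neighborhood of $0$ the weight is comparable to a constant, so the $A_p$ quantity is uniformly bounded there. On the centered intervals $I_t$ the two averages above multiply, after raising the second to the power $p-1$, to the $t$-independent constant $(\beta+1)^{-1}\bigl(1-\tfrac{\beta}{p-1}\bigr)^{-(p-1)}$; off-center intervals clustering at $0$ are then dominated by comparable centered intervals via the doubling property (equivalently, the homogeneity of $|\theta|^{\beta}$ under $\theta\mapsto t\theta$). Taking the supremum over all intervals yields a finite $A_p$ constant, so $|\theta|^{\beta}\in A_p(\T)$. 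I expect the main obstacle to be exactly this last point: centered intervals are trivial by scaling and far-away intervals are trivial by comparability, but a \emph{uniform} bound over all intervals — in particular those straddling the singularity without being centered at it — requires the doubling estimate for power weights, which is the only genuinely technical input.

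Finally I would convert $-1<\alpha(2-p)<p-1$ into a range for $p$. The case $\alpha=0$ is immediate, since $\mu\equiv1\in A_p$ for every $p$ and the interval degenerates to $(1,\infty)$. For $\alpha>0$ and $1<p\le2$ one has $\beta=\alpha(2-p)\ge0$, so $-1<\beta$ is automatic and $\beta<p-1$ rearranges to $p>\frac{2\alpha+1}{\alpha+1}$; for $p\ge2$ one has $\beta\le0$, so $\beta<p-1$ is automatic and $-1<\beta$ rearranges to $p<\frac{2\alpha+1}{\alpha}$. Since $\frac{2\alpha+1}{\alpha+1}<2<\frac{2\alpha+1}{\alpha}$, these two half-ranges glue to exactly $p\in\left(\frac{2\alpha+1}{\alpha+1},\frac{2\alpha+1}{\alpha}\right)$, as claimed.
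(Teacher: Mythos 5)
Your argument is correct, but there is nothing in the paper to compare it against: the paper does not prove Theorem \ref{intervalofponT} at all, it imports it verbatim as Proposition 4 and Theorem 5 of the cited reference \cite{MZ15}. So your proposal supplies a self-contained proof where the authors give only a citation. What you wrote is the standard route: reduce $|z-1|^{\alpha(2-p)}$ to the power weight $|\theta|^{\beta}$ with $\beta=\alpha(2-p)$ via $|e^{i\theta}-1|=2|\sin(\theta/2)|\approx|\theta|$ and the stability of the $A_p$ condition under two-sided comparability, invoke the classical characterization $|\theta|^{\beta}\in A_p$ iff $-1<\beta<p-1$ (necessity from centered intervals shrinking to the singularity, sufficiency from the centered computation plus the doubling/dilation argument for off-center intervals straddling $0$), and then solve the two inequalities $-1<\alpha(2-p)<p-1$ for $p$. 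The algebra is right, including the observation that $\frac{2\alpha+1}{\alpha+1}<2<\frac{2\alpha+1}{\alpha}$ so the two half-ranges glue, and the endpoint cases $\beta=-1$ and $\beta=p-1$ are correctly excluded because the relevant integral already diverges there. You also correctly flag the one genuinely technical point, namely the uniform bound over non-centered intervals near the singularity; that step is exactly the doubling estimate for power weights and is where any careful write-up must spend its effort. The only cosmetic caveat is the degenerate endpoint notation at $\alpha=0$, where $\frac{2\alpha+1}{\alpha}$ must be read as $+\infty$, which you already address.
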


By Proposition \ref{bidiscallow}, $(\D^2, \Sb^2, \Phi, S_2)$ is an admissible quadruple. Hence, we are able to apply Theorem \ref{main1} to derive the $L^p$-regularity of the Szeg\H{o} projection on $\Sb^2$.

\begin{theorem}
The Szeg\H{o} projection on the symmetrized bidisc $\Sb^2$ is bounded on $L^p(\Gamma_{\Sb^2})$ if and only if $\frac{4}{3}<p<4$.
\end{theorem}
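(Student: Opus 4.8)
The plan is to transfer the statement into a weighted norm inequality on the bidisc and then reduce it, slice by slice, to the one-dimensional weighted theory of Theorems~\ref{AponT} and~\ref{intervalofponT}. By Proposition~\ref{bidiscallow} the quadruple $(\D^2,\Sb^2,\Phi,S_2)$ is admissible, so Theorem~\ref{main1} tells us that $S_{\Sb^2}$ is bounded on $L^p(\Gamma_{\Sb^2})$ exactly when $S_{\D^2}$ is bounded on the $S_2$-invariant (symmetric) functions in $L^p(\T^2,w^{1-\frac p2})$, with $w$ as in \eqref{bidiscJ}. First I would replace $w$ by the cleaner weight $|z_1-z_2|=|e^{i\theta_1}-e^{i\theta_2}|$: writing $x=\theta_1-\theta_2$ one computes $w^2=2-2\cos x+\sin^2x=(2-2\cos x)\cdot\frac{3+\cos x}{2}=|z_1-z_2|^2\cdot\frac{3+\cos x}{2}$, and since $\frac{3+\cos x}{2}\in[1,2]$ we get $w\approx|z_1-z_2|$. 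Hence Lemma~\ref{compareweight} reduces the question to boundedness of $S_{\D^2}$ on symmetric functions in $L^p(\T^2,v)$, where $v:=|z_1-z_2|^{1-\frac p2}$; note that $v$ degenerates exactly along the diagonal $\{\theta_1=\theta_2\}$, the fixed-point locus of $S_2$. On a slice $\theta_2=\mathrm{const}$ the weight $v(\cdot,\theta_2)$ is a rotation of $|z-1|^{1-\frac p2}=|z-1|^{\alpha(2-p)}$ with $\alpha=\tfrac12$, so by Theorem~\ref{intervalofponT} it is an $A_p(\T)$ weight precisely for $p\in(\tfrac43,4)$; this is the candidate range.

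For the sufficiency ($\tfrac43<p<4$) I would exploit the product structure $S_{\D^2}=S_\D^{(1)}S_\D^{(2)}$, where $S_\D^{(j)}$ is the one-variable Szeg\H{o} projection acting in the $j$-th variable. Because $A_p$ constants are invariant under rotation, the slice weights $v(\cdot,\theta_2)$ and $v(\theta_1,\cdot)$ lie in $A_p(\T)$ with a constant independent of the frozen variable. Theorem~\ref{AponT} then gives a uniform slice bound, and applying it together with Fubini first in $\theta_1$ and then in $\theta_2$ yields $\|S_{\D^2}f\|_{L^p(v)}\le C\|S_\D^{(2)}f\|_{L^p(v)}\le C'\|f\|_{L^p(v)}$ (for trigonometric polynomials $f$, then extending by density). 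In particular $S_{\D^2}$ is bounded on all of $L^p(\T^2,v)$, hence a fortiori on the symmetric subspace, which proves the ``if'' direction. No use of symmetry is needed here.

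The necessity is the substantive part, and here symmetry cannot be discarded: boundedness on the symmetric subspace is a priori weaker than on all of $L^p(\T^2,v)$, so I must produce genuinely symmetric counterexamples for $p\notin(\tfrac43,4)$. The strategy is to localize near a diagonal point, say $(1,1)$: there $v\approx|\theta_1-\theta_2|^{1-\frac p2}$ and $S_{\D^2}$ linearizes to a product of two half-space projections, so a symmetric function concentrated near the diagonal sees essentially the one-dimensional weight $|z-1|^{\alpha(2-p)}$ at the critical exponent $\alpha=\tfrac12$. At the endpoints $p=\tfrac43$ and $p=4$ this weight sits exactly on the boundary of $A_p(\T)$, where the one-variable Szeg\H{o} projection fails to be bounded (the ``only if'' part of Theorem~\ref{AponT} together with Theorem~\ref{intervalofponT}); the failure is only logarithmic, so it is detected only by carefully tuned, slowly decaying test functions.

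I expect the main obstacle to be precisely this endpoint analysis. One must choose symmetric test functions that reproduce the logarithmically divergent one-dimensional extremizers, verify that symmetrization (evening in $\theta_1-\theta_2$) does not cancel the divergence, and control the off-diagonal part of $S_{\D^2}$, where $v$ is harmless, so that it cannot absorb the blow-up. Once unboundedness is established at $p=\tfrac43$ and $p=4$, the ranges $p<\tfrac43$ and $p>4$ follow from the same localized construction, the one-dimensional weight lying strictly outside $A_p$ (with a stronger, non-logarithmic divergence) there; this completes the ``only if'' direction and hence the proof that $S_{\Sb^2}$ is $L^p$-bounded if and only if $\tfrac43<p<4$.
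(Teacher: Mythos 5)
Your reduction to the weighted problem on the bidisc and your proof of the sufficiency direction are essentially identical to the paper's: admissibility via Proposition \ref{bidiscallow}, transfer via Theorem \ref{main1}, the comparison $w\approx|z_1-z_2|$ with Lemma \ref{compareweight}, and then the iterated one-variable argument with uniform slice $A_p$ constants from Theorems \ref{AponT} and \ref{intervalofponT}. That half of your argument is fine.

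The necessity direction, however, is a genuine gap rather than a proof. You correctly identify that failure of the slice weight to be in $A_p(\T)$ at $p=4$ does not by itself give unboundedness on the symmetric subspace, and you then propose a localization near the diagonal with ``carefully tuned, slowly decaying'' symmetric test functions mimicking one-dimensional endpoint extremizers --- but you never construct such functions, never verify that symmetrization does not cancel the divergence, and explicitly flag this as the main unresolved obstacle. As written, the ``only if'' direction is a plan, not an argument. The missing idea is much simpler than what you outline: take the single explicit symmetric test function $h(z_1,z_2)=|z_1-z_2|^2=2-\zeta_1\overline{\zeta_2}-\overline{\zeta_1}\zeta_2$ on $\T^2$. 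A direct computation with the product Szeg\H{o} kernel gives $S_{\D^2}(h)\equiv 2$, while
\[
\|h\|^4_{L^4(\T^2,w^{-1})}\approx\int_{\T^2}|\theta_1-\theta_2|^{7}\,d\theta_1d\theta_2<\infty
\quad\text{and}\quad
\|S_{\D^2}(h)\|^4_{L^4(\T^2,w^{-1})}\approx\int_{\T^2}|\theta_1-\theta_2|^{-1}\,d\theta_1d\theta_2=\infty,
\]
so unboundedness at $p=4$ is immediate; $p=\tfrac43$ then follows by self-adjointness of $S_{\Sb^2}$, and the full complements $p\ge4$, $p\le\tfrac43$ follow by interpolation with the $L^2$ bound. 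Your claim that the cases $p>4$ and $p<\tfrac43$ ``follow from the same localized construction'' is likewise unsubstantiated; the duality-plus-interpolation route is the clean way to finish. I recommend replacing your endpoint sketch with this explicit test function computation.
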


\begin{proof}
Note that on $\T^2$
\[
\sin^2(\theta_1-\theta_2)=1-\cos^2(\theta_1-\theta_2)\le 2-2\cos(\theta_1-\theta_2)=|z_1-z_2|^2.
\]
It follows that
\[
|z_1-z_2|\le w\le\sqrt{2}|z_1-z_2|,
\]
and thus
\[
w=\sqrt{2-2\cos(\theta_1-\theta_2)+\sin^2(\theta_1-\theta_2)}\approx|z_1-z_2|\approx|\theta_1-\theta_2|
\]
on $\T^2$. By Lemma \ref{compareweight}, in order to show that $S_{\D^2}$ is bounded on  the set of $G$-invariant functions in $L^p(\T^2, w^{1-\frac{p}{2}})$
it suffices to check the integral operator $S_{\D^2}$ on the set of $G$-invariant functions in $L^p(\T^2,|z_1-z_2|^{1-\frac{p}{2}})$.

To see the boundedness, one follows the idea in \cite{CKY20}. Since $S_{\D^2}$ is an iterated integral, by the symmetry of $z_1$ and $z_2$,  one only needs to check $S_{\D}$ on $L^p(\T,|z_1-z_2|^{1-\frac{p}{2}})$, where $z_1$ is considered as a variable and $z_2$ is considered as a parameter. By Theorem \ref{AponT}, it is sufficient to verify
$$|z_1-z_2|^{1-\frac{p}{2}}\in A_p(\T)$$
with a uniform constant independent of the paramenter $z_2$ in $\T$. By the rotational symmetry on $\T$, it suffices to verify
\[
|z-1|^{1-\frac{p}{2}}\in A_p(\T).
\]
By Theorem \ref{intervalofponT} with $\alpha=\frac{1}{2}$, the weight $|z-1|^{1-\frac{p}{2}}$ belongs to $A_p(\T)$ when
$\frac{4}{3}<p<4.$ This shows that $S_{\D^2}$ is bounded on $L^p(\T^2,|z_1-z_2|^{1-\frac{p}{2}})$ if $\frac{4}{3}<p<4$.

To obtain the unboundedness for $p=4$, consider the $G$-invariant test function
$h(z_1,z_2)=|z_1-z_2|^2$. On one hand, 
\[
\|h\|^4_{L^4(\T^2,w^{1-\frac{4}{2}})}=\int_{\T^2}|h|^4w^{-1}d\sigma\approx\int_{\T^2}|\theta_1-\theta_2|^{7}d\theta_1d\theta_2<\infty.
\]
 On the other hand, one obtains
\[
S_{\D^2}(h)(z)=\int_{\T^2}\frac{|\zeta_1|^2+|\zeta_2|^2-\zeta_1\overline{\zeta_2}-\overline{\zeta_1}\zeta_2}{(1-z_1\overline{\zeta_1})(1-z_2\overline{\zeta_2})}d\sigma(\zeta)=2,
\]
and thus
\[
\|S_{\D^2}(h)\|^4_{L^4(\T^2,w^{1-\frac{4}{2}})}=\int_{\T^2}2^4\cdot w^{-1}d\sigma\approx\int_{\T^2}|\theta_1-\theta_2|^{-1}d\theta_1d\theta_2=\infty.
\]
This implies that $S_{\D^2}$ is not bounded on the set of $G$-invariant functions in $L^4(\T^2, w^{-1})$. Therefore, by Theorem \ref{main1}, 
$S_{\Sb^2}$ is not bounded on $L^4(\Gamma_{\Sb^2})$. This completes the proof as $S_{\Sb^2}$ is self-adjoint.
\end{proof}

\section{Quotient domains of the ball}\label{SecBn}

In this section, the theory in \S\ref{generalsetup} is applied to the model domain $X=\B^n$. In the first subsection, we derive a sufficient condition on the density function in order for the quadruple $(\B^n, \Omega, \Phi, G)$ to be admissible. After that, we apply the theory to study the $L^p$ regularity of the Szeg\H{o} projection on generalized Thullen domains and the minimal ball.

\subsection{Admissible quadruple for quotient domains of $\B^n$}

Throughout this section, the quadruple $(\B^n,\Omega, \Phi, G)$ is assumed to satisfy Definition \ref{partbdry}. Furthermore, $G$ is assumed to be a finite subgroup of $U(n)$. Parallel to Theorem \ref{normsquare}, we first derive a key theorem that
provides a sufficient condition for the quadruple $(\B^n, \Omega, \Phi, G)$ to be admissible.

\begin{theorem}\label{normsquareb}
Let $w$ be a real-valued, positive almost everywhere, $G$-invariant, continuous function on $\Sp^{2n-1}$. If $\log w\in L^1(\Sp^{2n-1})$, then $w$ is admissible.
\end{theorem}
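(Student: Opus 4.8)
The plan is to follow the proof of Theorem \ref{normsquare}, reducing the admissibility of $w$ to the construction of a holomorphic function on $\B^n$ with prescribed boundary real part and then setting $g=e^{h}$. Put $\phi=\frac{1}{2}\log w$, which lies in $L^{1}(\Sp^{2n-1})$ by hypothesis. Once a holomorphic $h$ on $\B^n$ is produced whose non-tangential limit satisfies $\Re h^{*}=\phi$ almost everywhere, the function $g:=e^{h}$ is holomorphic and nowhere vanishing on $\B^n$, and its non-tangential limit obeys $|g^{*}|=e^{\Re h^{*}}=w^{1/2}$, so that $|g^{*}|^{2}=w$ almost everywhere, which is exactly the requirement in Definition \ref{con}.

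The analytic core is the existence of such an $h$. On the disc this is furnished by the conjugate function (the Herglotz representation recalled after Problem \ref{CY}), and on the polydisc it was obtained in Theorem \ref{normsquare} by a Fourier-analytic argument controlling the spectrum of the corrected data. On the ball neither device applies: for $n\ge 2$ the Poisson integral $P[\phi]$ is harmonic but in general not pluriharmonic, hence not the real part of any holomorphic function. The remedy, which is the heart of the argument, is to correct $P[\phi]$ by the Poisson integral of a finite real singular measure $\nu$ on $\Sp^{2n-1}$, chosen so that $v:=P[\phi]-P[\nu]$ is pluriharmonic. Since $\nu$ is singular with respect to surface measure, the non-tangential limit of $P[\nu]$ vanishes almost everywhere, so $v^{*}=\phi$; being pluriharmonic, $v=\Re h$ for a holomorphic $h$, and the bound $(\Re h)^{+}\le P[\frac{1}{2}\log^{+}w]+P[\nu^{-}]$ shows that $e^{h}$ lies in the Nevanlinna class, so its non-tangential limit exists almost everywhere. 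The existence of the correcting measure $\nu$ is precisely what Rudin's variation of Aleksandrov's Modification Theorem (\cite{Ale82,R83}) supplies, and I expect this to be the main obstacle: it encodes the failure, special to the ball, of the Poisson integral to return holomorphic data, and the continuity hypothesis on $w$ is what feeds the Baire-type decomposition underlying the Aleksandrov--Rudin machinery.

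It remains to make the function $G$-invariant. Because $G\subset U(n)$, each $\tau\in G$ is a unitary biholomorphism of $\B^n$ preserving $\Sp^{2n-1}$ and its surface measure, so $h\circ\tau$ is holomorphic and $v\circ\tau=\Re(h\circ\tau)$ is pluriharmonic with non-tangential limit $\phi\circ\tau$ almost everywhere. Averaging, set
\[
\tilde h=\frac{1}{|G|}\sum_{\tau\in G}h\circ\tau,
\]
a holomorphic $G$-invariant function whose real part $\Re\tilde h=\frac{1}{|G|}\sum_{\tau\in G}v\circ\tau$ has, by the $G$-invariance of $w$, the non-tangential limit
\[
\frac{1}{|G|}\sum_{\tau\in G}\phi\circ\tau=\phi\quad\text{a.e.}
\]
The same Nevanlinna bound, now applied to the convex combination $\Re\tilde h$, shows that $g:=e^{\tilde h}$ lies in the Nevanlinna class; hence $g$ is a nowhere-vanishing $G$-invariant holomorphic function whose non-tangential limit exists almost everywhere and satisfies $|g^{*}|^{2}=e^{2\Re\tilde h^{*}}=w$. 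Thus $w$ is admissible.

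Alternatively, and closer to the proof of Theorem \ref{normsquare}, one may build $G$-invariance directly into the measure-theoretic construction, working throughout with $G$-invariant continuous data and with $G$-invariant singular measures obtained by pushing forward to $\Gamma_{\Omega}$ and pulling back through $\Phi$. Either route concentrates all the difficulty in the single nontrivial input---the construction of the pluriharmonicity-correcting singular measure on $\Sp^{2n-1}$ via the Aleksandrov--Rudin theorem.
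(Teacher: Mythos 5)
Your proposal is correct and rests on the same analytic core as the paper --- the Aleksandrov--Rudin construction of a singular measure $\nu$ on $\Sp^{2n-1}$ making $P[\tfrac12\log w]-P[\nu]$ pluriharmonic, followed by exponentiation --- but it handles the $G$-invariance in a genuinely different and more elementary way. The paper does not treat the invariance as an afterthought: it re-proves Rudin's construction equivariantly (Lemmas \ref{metricbinner}--\ref{mainlemball}), choosing a fundamental region $R$ for the $G$-action, running the Vitali-cover argument inside $R$, and then transporting the metric balls and their associated functions $f_j$ around the orbit via $f_j\circ\kappa$ (using $\langle z,\eta\rangle=\langle\kappa z,\kappa\eta\rangle$ for $\kappa\in U(n)$), so that the approximating functions $F_k$ are $G$-invariant from the start. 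You instead take the non-invariant $h$ supplied by the citation and average, $\tilde h=\frac{1}{|G|}\sum_{\tau\in G}h\circ\tau$; since each $\tau\in U(n)$ preserves $\Sp^{2n-1}$, $\sigma$ (Corollary \ref{leb}) and the approach regions, and $\phi\circ\tau=\phi$, this does yield a $G$-invariant $h$ with the right boundary real part, and your Nevanlinna-class bound suffices for the a.e.\ existence of $g^*$. This shortcut buys a much shorter proof at the cost of treating the modification theorem as a black box; what it does not discharge is the final reduction the paper carries out explicitly, namely that $\tfrac12\log w$ is only in $L^1$ (it equals $-\infty$ on the zero set of $w$) while the Rudin--Aleksandrov construction is stated for positive continuous (or lower semicontinuous) data, so one still needs to write $\tfrac12\log w=C-\lim_j\varphi_j$ with $\varphi_j$ positive continuous and increasing and pass to the limit in the resulting $h_j$ --- a step you gesture at but should make explicit.
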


\begin{remark}
Similar to Theorem \ref{normsquare}, there is a real singular measure $\mu$ on $\Sp^{2n-1}$ and a holomorphic function $h$ such that
\[
\Re(h)=P\left(\frac{1}{2}\log w-d\mu\right),
\]
where $P$ is the Poisson integral on $\Sp^{2n-1}$. Hence $\Re(h^*)=\frac{1}{2}\log w$ almost everywhere on $\Sp^{2n-1}$, see \cite{R80}. The existence of $\mu$ and $h$ follows essentially from Aleksandrov's Modification Theorem, see \cite{Ale82}. The argument below follows from \cite[Theorem 3.3, Lemma 3.4, 3.7, and 3.8]{R83}, which is a special case with exponent $p=\frac{1}{2}$ and degree $m=1$.

\end{remark}

\begin{definition}\label{metricball}
For $\zeta,\eta\in\Sp^{2n-1}$, define a metric
\[
d(\zeta,\eta):=|1-\langle\zeta,\eta\rangle|^{\frac{1}{2}},
\]
and the metric ball centered at $\eta$ of radius $\delta$ by
\[
Q_{\delta}(\eta):=\{\zeta\in\Sp^{2n-1}:\, d(\zeta,\eta)<\delta\},
\]
for $0<\delta\le\sqrt{2}$. The notation $\sigma(Q_{\delta}(\eta))$ denotes the volume of $Q_{\delta}(\eta)$ with respect to the Lebesgue measure $\sigma$ on $\Sp^{2n-1}$.
\end{definition}

\begin{remark}
It is well-known that $d$ is indeed a metric on $\Sp^{2n-1}$ (  \cite[Proposition 5.1.2]{R80}). Also, the metric balls together with the measure $\sigma$ on $\Sp^{2n-1}$ satisfy the engulfing property and the doubling property in measure theory. See Proposition \ref{doublemeasure} below for $n=2$ and the proof for general $n$ is similar (  \cite[Proposition 5.1.4]{R80}).
\end{remark}

\begin{lemma}\label{metricbinner}
There is a constant $\alpha<1$ with the following property: to every metric ball $Q=Q_{\delta}(\eta)$, corresponds an $f$ such that
\begin{enumerate}
\item $f(z)=H(\langle z,\eta\rangle)$, where $\eta$ is the center of $Q$ and $H$ is holomorphic in $\D$ and continuous on $\overline{\D}$ with $H(0)=0$;
\item $\int_{\Sp^{2n-1}}|f|^{\frac{1}{2}}d\sigma<\sigma(Q)$;
\item $\int_{\Sp^{2n-1}}|\chi_Q-\Re(f)|^{\frac{1}{2}}d\sigma<\alpha\cdot \sigma(Q)$, where $\chi_Q$ is the characteristic function of $Q$ on $\Sp^{2n-1}$.
\end{enumerate}
\end{lemma}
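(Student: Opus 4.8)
The plan is to pass to the slice function $w = \langle z, \eta\rangle$ and build $H$ as a one-variable object adapted to the scale of $Q$. First I would reduce to $\eta = e_1 = (1, 0, \dots, 0)$. By Corollary \ref{leb} the measure $\sigma$ is $U(n)$-invariant, and since any $U \in U(n)$ satisfies $\langle U\zeta, U\eta\rangle = \langle\zeta,\eta\rangle$, the metric $d$ and the metric balls $Q_{\delta}(\eta)$ of Definition \ref{metricball} are carried equivariantly into one another; all three requested properties are preserved under replacing $(\eta, f(z))$ by $(U\eta, f(U^{-1}z))$, so it suffices to treat $\eta = e_1$. Then $\langle z, \eta\rangle = z_1$ and $Q_{\delta}(\eta) = \{z \in \Sp^{2n-1} : |1 - z_1| < \delta^2\}$. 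Writing $\beta = \delta^2$, I would use the slice formula
\[
\int_{\Sp^{2n-1}} \phi(z_1)\, d\sigma(z) = c_n \int_{\D} \phi(w)\,(1 - |w|^2)^{n-2}\, dA(w),
\]
valid for continuous $\phi$ on $\overline{\D}$, to turn (2) and (3) into planar integrals against the weight $(1-|w|^2)^{n-2}$. In these coordinates $Q$ becomes the lens $\{w \in \D : |1 - w| < \beta\}$, and a direct computation with the weight gives $\sigma(\{|1 - z_1| < t\}) \approx t^n$, i.e. $\sigma(Q) \approx \beta^n$; this is the quantitative form of the doubling property noted after Definition \ref{metricball}, and it will be used throughout.

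Next I would construct $H$ concentrated on the lens at scale $\beta$. The natural scale-adapted building block is the Cayley-type map $w \mapsto \dfrac{c\beta}{c\beta + (1 - w)}$, which is holomorphic and bounded on $\overline{\D}$ because its denominator has real part $\ge c\beta > 0$ there; normalizing it to vanish at $w = 0$ (at the negligible cost of subtracting its value there, of size $O(\beta)$) gives a candidate of the required form $f(z) = H(z_1)$ with $H(0) = 0$. The function $H$ is designed so that $\Re H$ is bounded below by a positive constant on a definite fraction of $Q$ while $|H|$ decays off $Q$. The verification then proceeds by decomposing $\Sp^{2n-1}\setminus Q$ into the dyadic shells $E_k = \{2^k\beta \le |1 - z_1| < 2^{k+1}\beta\}$, on which $|H|$ is comparable to a fixed power of $2^{-k}$ while $\sigma(E_k) \approx 2^{kn}\sigma(Q)$; both (2) and the off-$Q$ part of (3) then reduce to summing a geometric series in $k$, and I would split the left side of (3) as $\int_Q |1 - \Re H|^{1/2}\,d\sigma + \int_{\Sp^{2n-1}\setminus Q} |\Re H|^{1/2}\,d\sigma$, estimating the first term by the fraction of $Q$ on which $\Re H$ stays near $1$ and the second by the shell sum.

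The main obstacle — and the real content of the lemma — is producing a single $H$ that simultaneously makes (2) hold with mass below $\sigma(Q)$ and (3) hold with a constant $\alpha$ strictly below $1$, uniformly in $\delta$ and $\eta$. These two demands pull against each other: taking the Cayley block to the first power keeps $\Re H$ positive and close to $1$ across the whole lens (good for (3) over $Q$) but only gives decay $|H| \sim \beta/|1-z_1|$, for which the shell sum $\sum_k 2^{-k/2}2^{kn}$ diverges and (2) fails; raising the power to secure $s > 2n$, hence a convergent shell sum and finite $L^{1/2}$ mass, rotates the phase so that $\Re H$ no longer covers a definite fraction of $Q$, and the bound $\alpha < 1$ degrades. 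Reconciling the two is exactly where the exponent $\tfrac12 < 1$ is indispensable: the quasi-norm inequality $|a + b|^{1/2} \le |a|^{1/2} + |b|^{1/2}$ forgives large values of $\Re H$ on sets of small measure and lets the renormalizing constant and the slice-weight errors be absorbed without the usual triangle-inequality loss, so that a function crude in $L^1$ can still beat $\sigma(Q)$ in $L^{1/2}$. Carrying out this balance is the substance of Rudin's Lemmas 3.7 and 3.8 in \cite{R83}, specialized to exponent $p = \tfrac12$ and degree $m = 1$; I would follow that construction, tracking the constants to confirm that $\alpha$ can be chosen below $1$ independently of the ball $Q$.
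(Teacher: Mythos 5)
Your proposal lands in the same place as the paper: the paper's entire proof of this lemma is the single sentence ``This is exactly \cite[Lemma 3.7]{R83} for the case $m=1$,'' and after your preparatory reductions (unitary invariance, the slice formula, $\sigma(Q_\delta)\approx\delta^{2n}$) you likewise defer the genuinely delicate step --- producing one $H$ that beats $\sigma(Q)$ in (2) while keeping $\alpha<1$ in (3) --- to Rudin's Lemmas 3.7--3.8 with $p=\tfrac12$, $m=1$. That is an accurate identification of both the source and of where the real work lies, so the proposal is correct and takes essentially the same route as the paper.
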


\begin{proof}
This is exactly \cite[Lemma 3.7]{R83} for the case $m=1$.
\end{proof}

\begin{lemma}\label{metricbinner2}
There is a constant $\beta<1$ with the following property: for $0<\tau<1$ and $Q=Q_r(\zeta_0)$, letting $\delta=\tau r$ and $\{\eta_1,\dots,\eta_N\}$ be a maximal subset of $Q_{\frac{r}{2}}(\zeta_0)$ with respect to having $d(\eta_j,\eta_k)\ge2\delta$ for all $j\neq k$ and letting $f_{\eta_1},\dots,f_{\eta_N}$ be the functions from Lemma \ref{metricbinner} associated to $Q_{\delta}(\eta_1),\dots,Q_{\delta}(\eta_N)$, then $f=f_{\eta_1}+\cdots+f_{\eta_N}$ satisfies
\begin{enumerate}
\item $|f|<1$ on $Q$;
\item $|f|<\tau$ on $\Sp^{2n-1}\setminus Q$;
\item $\int_{\Sp^{2n-1}}|f|^{\frac{1}{2}}d\sigma<\sigma(Q)$;
\item $\int_{\Sp^{2n-1}}|\chi_Q-\Re(f)|^{\frac{1}{2}}d\sigma<\beta\cdot \sigma(Q)$.
\end{enumerate}
\end{lemma}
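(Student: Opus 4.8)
The plan is to build the function $f$ as the superposition $f = f_{\eta_1} + \cdots + f_{\eta_N}$ declared in the statement and then verify the four properties by exploiting the separation condition $d(\eta_j,\eta_k)\ge 2\delta$ together with the structural form $f_{\eta_j}(z)=H_j(\langle z,\eta_j\rangle)$ from Lemma \ref{metricbinner}. The heart of the matter is that each $f_{\eta_j}$ is sharply concentrated near the center $\eta_j$ and decays away from it: since $H_j$ is holomorphic on $\D$, continuous on $\overline{\D}$, with $H_j(0)=0$, the value $f_{\eta_j}(z)=H_j(\langle z,\eta_j\rangle)$ is controlled by how close $\langle z,\eta_j\rangle$ is to the boundary point where $H_j$ peaks, i.e.\ by the metric distance $d(z,\eta_j)$. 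The first step, then, is to record quantitatively that $f_{\eta_j}$ is essentially supported on $Q_\delta(\eta_j)$ and is uniformly small outside a fixed multiple of that ball; this is precisely the content of the peaking construction in \cite[Lemma 3.7]{R83} and I would cite the corresponding estimates there rather than rederive them.

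For properties (1) and (2), the key geometric input is that the balls $Q_\delta(\eta_j)$ with $\delta=\tau r$ are contained in $Q=Q_r(\zeta_0)$ (because the centers lie in $Q_{r/2}(\zeta_0)$ and $\tau<1$), so for $z\in Q$ the dominant contribution comes from the single nearest center while the tails of the others sum to something small; for $z\notin Q$ every center is at metric distance comparable to at least $r$ from $z$, forcing each summand below the threshold $\tau$. I would make this precise by combining the pointwise decay of a single $f_{\eta_j}$ with a \emph{bounded-overlap} argument: because of the $2\delta$-separation of the $\eta_j$ and the doubling property of $\sigma$, only boundedly many balls $Q_\delta(\eta_j)$ can be simultaneously within a given metric distance of any fixed point, so the sum of the tails is dominated by a convergent geometric-type series uniformly in $z$. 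Choosing $\tau$ small (and $r$ in the admissible range) then yields both $|f|<1$ on $Q$ and $|f|<\tau$ off $Q$.

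For the $L^{1/2}$ estimates (3) and (4), I would pass from the single-ball bounds of Lemma \ref{metricbinner}, namely $\int|f_{\eta_j}|^{1/2}d\sigma<\sigma(Q_\delta(\eta_j))$ and $\int|\chi_{Q_\delta(\eta_j)}-\Re(f_{\eta_j})|^{1/2}d\sigma<\alpha\,\sigma(Q_\delta(\eta_j))$, to the aggregate. The crucial subadditivity is that for the exponent $\tfrac12<1$ the quasi-norm satisfies $|a+b|^{1/2}\le |a|^{1/2}+|b|^{1/2}$, so $\int|f|^{1/2}d\sigma\le\sum_j\int|f_{\eta_j}|^{1/2}d\sigma<\sum_j\sigma(Q_\delta(\eta_j))$. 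Since the $Q_\delta(\eta_j)$ are pairwise disjoint (their centers are $2\delta$-separated and each has radius $\delta$) and all lie in $Q$, we have $\sum_j\sigma(Q_\delta(\eta_j))\le\sigma(Q)$, giving (3). For (4) the analogous subadditive splitting reduces matters to estimating $\int|\chi_Q-\sum_j\Re(f_{\eta_j})|^{1/2}$; writing $\chi_Q=\sum_j\chi_{Q_\delta(\eta_j)}+\chi_{Q\setminus\bigcup_j Q_\delta(\eta_j)}$ and using the per-ball bound on $|\chi_{Q_\delta(\eta_j)}-\Re(f_{\eta_j})|$, the error splits into the sum of the $N$ individual errors (total $<\alpha\,\sigma(Q)$) plus the measure of the leftover region $Q\setminus\bigcup_j Q_\delta(\eta_j)$.

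The main obstacle I anticipate is controlling exactly this leftover set: one must show that the maximal $2\delta$-separated family $\{\eta_j\}$ in $Q_{r/2}(\zeta_0)$ is also $\delta$-\emph{dense} there, so that the balls $Q_\delta(\eta_j)$ cover $Q_{r/2}(\zeta_0)$ and the uncovered part of $Q$ has measure bounded by a fixed fraction of $\sigma(Q)$ (via the doubling property). Maximality gives density automatically — any point of $Q_{r/2}(\zeta_0)$ lies within $2\delta$ of some $\eta_j$, else it could be adjoined — and then a doubling/engulfing comparison bounds $\sigma(Q\setminus\bigcup_j Q_\delta(\eta_j))$ by a constant multiple of $\sigma(Q)$ that shrinks with $\tau$. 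Assembling the individual error $\alpha\,\sigma(Q)$ with this geometric remainder and absorbing both into a single constant $\beta<1$ — which forces a careful, $\tau$-dependent choice of constants to keep $\beta$ strictly below $1$ — is the delicate bookkeeping step, and it is where I would be most careful; the rest is a faithful transcription of \cite[Lemma 3.8]{R83} in the case $m=1$.
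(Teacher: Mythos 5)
The paper does not actually prove this lemma; it cites it verbatim as \cite[Lemma 3.8]{R83} with $m=1$, so your proposal has to be measured against Rudin's argument. Your treatment of (3) and (4) has the right ingredients (subadditivity of $t\mapsto t^{1/2}$, disjointness of the $Q_\delta(\eta_j)$ from the $2\delta$-separation, maximality forcing the $Q_{2\delta}(\eta_j)$ to cover $Q_{r/2}(\zeta_0)$), but the assembly for (4) as you describe it does not close. The uncovered set $Q\setminus\bigcup_j Q_\delta(\eta_j)$ is \emph{not} a small fraction of $\sigma(Q)$, and it does not shrink with $\tau$: the disjoint $\delta$-balls all lie in $Q_{r/2+\delta}(\zeta_0)$, whose measure is on the order of $(\tfrac12+\tau)^4\sigma(Q)$, so for small $\tau$ the remainder is at least roughly $\tfrac{15}{16}\sigma(Q)$, and ``$\alpha\sigma(Q)$ plus the remainder'' exceeds $\sigma(Q)$. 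The correct bookkeeping keeps the per-ball errors local: $\alpha\sum_j\sigma(Q_\delta(\eta_j))+\bigl(\sigma(Q)-\sum_j\sigma(Q_\delta(\eta_j))\bigr)=\sigma(Q)-(1-\alpha)\sum_j\sigma(Q_\delta(\eta_j))\le\bigl(1-(1-\alpha)c_0\bigr)\sigma(Q)$, where the covering by $Q_{2\delta}(\eta_j)$ plus doubling give $\sum_j\sigma(Q_\delta(\eta_j))\ge c_0\,\sigma(Q)$. One must exhibit a fixed \emph{saving} over the trivial bound $\sigma(Q)$ coming from the covered portion, not add the covered error to the uncovered measure.

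The more serious gap is in (1) and (2). Lemma \ref{metricbinner} as recorded in the paper gives no pointwise information at all --- no sup-norm bound and no decay off $Q_\delta(\eta)$ --- so neither conclusion can be extracted from it. Rudin's actual single-ball lemma carries an extra free parameter $\epsilon>0$ with $|f|<\epsilon$ on $\Sp^{2n-1}\setminus Q$ together with sup-norm control, and his proof of Lemma 3.8 applies it to $Q_\delta(\eta_j)$ with $\epsilon_j$ chosen so that $\sum_j\epsilon_j<\tau$; then off $Q$ every summand is in its tail regime and $|f|<\sum_j\epsilon_j<\tau$, while on $Q$ a point lies in at most one $Q_\delta(\eta_j)$ and the remaining terms contribute less than the slack in the sup bound. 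Your substitute --- a fixed power decay summed by bounded overlap of the $2\delta$-net --- cannot produce $|f|<\tau$ for an \emph{arbitrary given} $\tau\in(0,1)$: as $\tau$ decreases the target tightens while $N$ grows like $\tau^{-4}$, and a fixed decay rate yields at best $|f|\lesssim\tau^{s}$ off $Q$ with constants that do not beat $\tau$ uniformly over $(0,1)$. Moreover ``choosing $\tau$ small'' is not available, since $\tau$ is universally quantified in the statement (and shrinking $\tau$ makes the required bound harder, not easier). To repair (1)--(2) you must import the $\epsilon$-version of the single-ball lemma from \cite{R83} rather than the truncated version stated as Lemma \ref{metricbinner}.
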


\begin{proof}
This is exactly \cite[Lemma 3.8]{R83} for the case $m=1$.
\end{proof}

\begin{lemma}\label{mainlemball}
There is a constant $\gamma<1$ with the following property: to every positive $G$-invariant $\varphi\in C(\Sp^{2n-1})$, corresponds an $F$ such that
\begin{enumerate}
\item $F$ is holomorphic in $\B^n$, continuous on $\overline{\B^n}$, $F(0)=0$, and $F$ is $G$-invariant,
\item $\varphi-\Re(F)>0$ on $\Sp^{2n-1}$,
\item $\int_{\Sp^{2n-1}}|F|^{\frac{1}{2}}d\sigma<\int_{\Sp^{2n-1}}\varphi^{\frac{1}{2}}d\sigma$,
\item $\int_{\Sp^{2n-1}}(\varphi-\Re(F))^{\frac{1}{2}}d\sigma<\gamma\int_{\Sp^{2n-1}}\varphi^{\frac{1}{2}}d\sigma$.
\end{enumerate}
\end{lemma}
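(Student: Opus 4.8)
The plan is to realize $F$ as a finite superposition of the localized building blocks produced by Lemma \ref{metricbinner2}, one for each ball in a fine $G$-invariant decomposition of the sphere on which $\varphi$ is nearly constant. Since $\varphi$ is continuous and positive on the compact set $\Sp^{2n-1}$, we have $0<a\le\varphi\le b<\infty$. Using the doubling and engulfing properties of the metric balls $Q_\delta(\eta)$, together with the fact that $G\subset U(n)$ acts by isometries of the metric $d$ of Definition \ref{metricball} (as $d$ is $U(n)$-invariant) and preserves $\sigma$ (Corollary \ref{leb}), I would first extract, at a small scale $\delta$, a disjoint family $\{Q_j\}_{j=1}^N$ that is $G$-invariant as a set, covers $\Sp^{2n-1}$ up to a set of arbitrarily small measure, and is fine enough that the oscillation of $\varphi$ on each $Q_j$ is below a prescribed $\e$. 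Setting $c_j:=\min_{Q_j}\varphi$, taken constant along each $G$-orbit (possible because $\varphi$ is $G$-invariant), and $s:=\sum_j c_j\chi_{Q_j}$, gives a $G$-invariant simple function with $0\le s\le\varphi$ and $\int_{\Sp^{2n-1}}|\varphi-s|^{\frac12}\,d\sigma$ as small a multiple of $\int_{\Sp^{2n-1}}\varphi^{\frac12}\,d\sigma$ as desired. For each $Q_j$ let $f_j$ be the function from Lemma \ref{metricbinner2} (with a parameter $\tau$ to be fixed later), and define $F:=\sum_j c_j f_j$.

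Granting for the moment that $F$ is $G$-invariant, properties (3) and (4) follow from the subadditivity of $t\mapsto t^{\frac12}$ on $[0,\infty)$, which is exactly the feature of the exponent $\tfrac12$ that makes the scheme additive. For (3), the pointwise bound $|F|^{\frac12}\le\sum_j c_j^{\frac12}|f_j|^{\frac12}$ combined with Lemma \ref{metricbinner2}(3) and disjointness gives $\int_{\Sp^{2n-1}}|F|^{\frac12}\,d\sigma<\sum_j c_j^{\frac12}\sigma(Q_j)=\sum_j\int_{Q_j}c_j^{\frac12}\,d\sigma\le\int_{\Sp^{2n-1}}\varphi^{\frac12}\,d\sigma$. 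For (4), I would compare $\Re F$ not to $\varphi$ but to $s$: writing $\varphi-\Re F=(\varphi-s)+\sum_j c_j(\chi_{Q_j}-\Re f_j)$ and using subadditivity with Lemma \ref{metricbinner2}(4) yields $\int_{\Sp^{2n-1}}(\varphi-\Re F)^{\frac12}\,d\sigma\le\int_{\Sp^{2n-1}}|\varphi-s|^{\frac12}\,d\sigma+\beta\sum_j c_j^{\frac12}\sigma(Q_j)<(\e'+\beta)\int_{\Sp^{2n-1}}\varphi^{\frac12}\,d\sigma$, where $\e'$ is controlled by the fineness of the decomposition and we used $\int_{\Sp^{2n-1}}\varphi^{\frac12}\,d\sigma\ge a^{\frac12}\sigma(\Sp^{2n-1})>0$. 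Choosing $\e'<1-\beta$ produces the absolute constant $\gamma:=\e'+\beta<1$.

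The strict positivity in (2) is where the localization estimate Lemma \ref{metricbinner2}(2) enters. At a point $z\in Q_{j_0}$ the disjointness of the family gives $|f_j(z)|<\tau$ for every $j\ne j_0$, while $\Re f_{j_0}(z)<1$ by Lemma \ref{metricbinner2}(1); at a point $z$ outside $\bigcup_j Q_j$ one has $|f_j(z)|<\tau$ for all $j$. Hence $\Re F(z)<c_{j_0}+\tau\sum_j c_j$ on $Q_{j_0}$ and $\Re F(z)<\tau\sum_j c_j$ off the balls. After the decomposition, and hence $N$ and $C:=\sum_j c_j$, has been fixed in the first step, I would lower each $c_j$ by $2\tau C$ and then choose $\tau$ small enough that $\tau C<a$ and the new coefficients remain positive; this forces $\Re F<\varphi$ everywhere at the cost of only an $O(\tau^{1/2})$ perturbation of the estimates above, which is harmless. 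The order of quantifiers is essential: $\delta$ is fixed first for (3)--(4), and only afterwards is $\tau$ chosen for (2).

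The main obstacle is securing $G$-invariance of $F$ without destroying the $L^{1/2}$ estimates. The naive remedy of symmetrizing, $F:=\tfrac{1}{|G|}\sum_{\tau\in G}F_0\circ\tau$, is fatal: because $\tfrac12<1$, subadditivity runs the wrong way and introduces a factor $|G|^{\frac12}>1$ into $\int_{\Sp^{2n-1}}|F|^{\frac12}\,d\sigma$, wrecking the strict inequality (3). Instead I would build invariance in from the start. Since each block has the form $f_{Q}(z)=\sum_k H_k(\langle z,\eta_k\rangle)$ assembled from sub-centers $\eta_k$, and $\tau\in G\subset U(n)$ satisfies $\langle\tau z,\eta\rangle=\langle z,\tau^{-1}\eta\rangle$, one gets $f_{Q}\circ\tau=f_{\tau^{-1}Q}$ provided the maximal $2\delta$-separated subsets of Lemma \ref{metricbinner2} are chosen $G$-equivariantly. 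As $G$ acts by $d$-isometries, such a choice exists: select the separated subsets on one ball per orbit and transport them by $G$. With the family $G$-invariant, the coefficients $c_j$ orbit-constant, and the blocks chosen this way, $F\circ\tau=F$ holds exactly, so no averaging and no $|G|^{\frac12}$ loss occur. The one delicate point to settle is a ball whose $G$-orbit is shorter than $|G|$, i.e. stabilized by a nontrivial subgroup, where a stabilizer-invariant maximal separated subset is required; I expect this to be handled either by selecting the separated set inside a fundamental domain for the stabilizer's action on that ball, or by refining $\delta$ so that the measure contributed by such exceptional balls is negligible.
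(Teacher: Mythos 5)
Your proposal is correct and follows essentially the same route as the paper's proof: a fine disjoint family of metric balls on which $\varphi$ oscillates little, coefficients $c_j$ chosen just below $\inf_{Q_j}\varphi$, the building blocks of Lemma \ref{metricbinner2}, subadditivity of $t\mapsto t^{1/2}$ for (3)--(4), and the localization bounds $|f_j|<1$ on $Q_j$, $|f_j|<\tau$ off $Q_j$ for (2). The one point you flag as delicate --- producing a family that is simultaneously disjoint and $G$-invariant without stabilizer trouble --- is handled in the paper by running the Vitali covering argument entirely inside the open interior $R$ of a fundamental region for $G$ and then taking the collection $\{\kappa Q_j\}_{\kappa\in G,\,j\in E}$ with $F=\sum_{\kappa\in G}\sum_{j\in E}c_j f_j\circ\kappa$, so every ball avoids the fixed-point locus and the $G$-translates stay pairwise disjoint; this is in effect the first of the two remedies you suggest.
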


\begin{proof}
Let
\[
\e=\frac{1-\beta}{2}\int_{\Sp^{2n-1}}\varphi^{\frac{1}{2}}d\sigma,
\]
where $\beta$ is as in Lemma \ref{metricbinner2}. Let $R$ be the interior of a fundamental region in $\Sp^{2n-1}$ under the $G$-actions. Then $\sigma(\overline{R}\setminus R)=0$. To each $\eta\in R$ associate a family of metric balls $\{Q_{\delta}(\eta)\}_{\delta<\delta(\eta)}$ such that for each element in the family, $Q_{\delta}(\eta)\subset R$ and
\[
\sup_{z\in Q_{\delta}(\eta)}\varphi(z)-\inf_{z\in Q_{\delta}(\eta)}\varphi(z)<\frac{1}{2}\left(\frac{\e}{2|G|\sigma(\Sp^{2n-1})}\right)^2,
\]
where $|G|$ is the cardinality of the finite group $G$. This can be done since $R$ is a relatively open set in $\Sp^{2n-1}$, and since $\varphi$ is continuous on $\overline{R}$, so $\varphi$ is uniformly continuous on $R$. Moreover, the metric balls can be chosen as closed balls if we rescale the upper bound $\delta(\eta)$ for each $\eta\in R$. Then
\[
\cup_{\eta\in R}\{Q_{\delta}(\eta)\}_{\delta<\delta(\eta)}
\]
forms a Vitali cover of $R$. Since the metric balls with $\sigma$ satisfy the englfing and doubling property, an application of Vitali's covering theorem shows that there is a countable subcollection of disjoint metric balls $\{Q_j\}$ such that
\[
\sigma(R\setminus\cup_jQ_j)=0.
\]
(See \cite[Chap. 4 \S3]{S64} for the detail of Vitali's covering theorem and \cite[Chap. 1 \S5.4]{Ste70}.) 
By continuity, $|\varphi|\le M$ on $\Sp^{2n-1}$ for some $M>0$. Since $\{Q_j\}$ is a disjoint subcollection and since $\sigma(R)<\infty$, there is a finite set $E$ of positive integers such that
\[
\sigma(R\setminus\cup_{j\in E}Q_j)<\frac{\e}{2M^{1/2}|G|}.
\]
Since $\varphi$ is positive and continuous, for each $j\in E$, we can associate a constant $c_j>0$ such that
\[
\frac{1}{2}\inf_{z\in Q_j}\varphi(z)<c_j<\inf_{z\in Q_j}\varphi(z)
\]
and
\[
0<\inf_{z\in Q_j}\varphi(z)-c_j<\frac{1}{2}\left(\frac{\e}{2|G|\sigma(\Sp^{2n-1})}\right)^2.
\]
Let $\chi_j$ be the characteristic function of $Q_j$. Since $E$ is finite and $\varphi$ is positive,
\[
\tau:=\inf_{\zeta\in R}\left(\varphi(\zeta)-\sum_{j\in E}c_j\chi_j(\zeta)\right)>0
\]
and
\[
\tau\le c_j
\]
for all $j\in E$. Moreover,
\begin{align*}
\int_{R}\left(\varphi-\sum_{j\in E}c_j\chi_j\right)^{1/2}d\sigma
&=\int_{R\setminus\cup_{j\in E}Q_j}\varphi^{1/2}d\sigma+\sum_{j\in E}\int_{Q_j}\left(\varphi-c_j\right)^{1/2}d\sigma\\
&<\frac{\e}{2M^{1/2}|G|}\cdot M^{1/2}+\sum_{j\in E}\int_{Q_j}\left(\sup_{Q_j}\varphi-\inf_{Q_j}\varphi+\inf_{Q_j}\varphi-c_j\right)^{1/2}d\sigma\\
&<\frac{\e}{2|G|}+\sum_{j\in E}\int_{Q_j}\frac{\e}{2|G|\sigma(\Sp^{2n-1})}d\sigma\\
&\le\frac{\e}{2|G|}+\frac{\e}{2|G|\sigma(\Sp^{2n-1})}\cdot\sigma(\Sp^{2n-1})\\
&=\frac{\e}{|G|}.
\end{align*}

Note that $\kappa R$ is another fundamental region for any $\kappa\in G$. Then the metric balls $\kappa Q_j$ cover the new region $\kappa R$ with the same $E$ and constants. Therefore,
\[
\{Q_k\}:=\cup_{\kappa\in G,j\in E}\{\kappa Q_j\}
\]
forms a finite sub-collection of the cover $\cup_{\kappa\in G}\{\kappa Q_j\}$ for $\Sp^{2n-1}$ such that
\[
\inf_{\zeta\in\Sp^{2n-1}}\left(\varphi(\zeta)-\sum_{k}c_k\chi_k(\zeta)\right)=\tau
\]
and
\[
\int_{\Sp^{2n-1}}\left(\varphi-\sum_{k}c_k\chi_k\right)^{\frac{1}{2}}d\sigma<\e.
\]

For each $j\in E$, associate an $f_j$ to $Q_j$ as in Lemma \ref{metricbinner2} with $\frac{\tau}{2c_j|E|\cdot|G|}<1$ in place of $\tau$. Define
\[
F=\sum_{\kappa\in G}\sum_{j\in E}c_jf_j\circ\kappa.
\]
Note that for each $\kappa\in G$ and $j\in E$, the associated function to $\kappa Q_j=\kappa Q(\eta_j)=Q(\kappa(\eta_j))$ is simply $f_j\circ\kappa$. This is because the associated function in Lemma \ref{metricbinner} depends on $\langle z,\eta\rangle=\langle\kappa(z),\kappa(\eta)\rangle$ and $\kappa\in U(n)$. So the sum above can be written as the associated $c_kf_k$ over the finite sub-collection $\{Q_k\}$:
\[
F=\sum_{\kappa\in G}\sum_{j\in E}c_jf_j\circ\kappa=\sum_kc_kf_k.
\]

To verify (1), one sees each $f_k$ is holomorphic, continuous to $\B^n$, and $f(0)=0$. Moreover, $F$ is $G$-invariant, since by definition of $F$ the sum is over $G$.

To verify (2),
\[
\varphi-\Re(F)\ge\tau+\sum_kc_k(\chi_k-\Re(f_k)).
\]
The summand is $\ge0$ on $Q_k$ and is $>-\frac{\tau}{2|E|\cdot|G|}$ on $\Sp^{2n-1}\setminus Q_k$ by Lemma \ref{metricbinner2} (1) and (2). Hence,
\[
\varphi-\Re(F)\ge\tau-\frac{\tau}{2}>0.
\]

To verify (3), by Lemma \ref{metricbinner2} (3),
\begin{align*}
\int_{\Sp^{2n-1}}|F|^{\frac{1}{2}}d\sigma
&\le \sum_kc_k^{\frac{1}{2}}\int_{\Sp^{2n-1}}|f_k|^{\frac{1}{2}}d\sigma\\
&<\sum_kc_k^{\frac{1}{2}}\sigma(Q_k)=\int_{\Sp^{2n-1}}\left(\sum_kc_k\chi_k\right)^{\frac{1}{2}}d\sigma\\
&<\int_{\Sp^{2n-1}}\varphi^{\frac{1}{2}}d\sigma.
\end{align*}

To verify (4), by the choice of $\e$, the construction of the sub-collection $\{Q_k\}$, the previous step, and Lemma \ref{metricbinner2} (4),
\begin{align*}
\int_{\Sp^{2n-1}}(\varphi-\Re(F))^{\frac{1}{2}}d\sigma
&<\int_{\Sp^{2n-1}}\left(\varphi-\sum_kc_k\chi_k\right)^{\frac{1}{2}}d\sigma+\sum_kc_k^{\frac{1}{2}}\int_{\Sp^{2n-1}}|\chi_k-\Re(f_k)|^{\frac{1}{2}}d\sigma\\
&<\e+\beta\sum_kc_k^{\frac{1}{2}}\sigma(Q_k)\\
&<\e+\beta\int_{\Sp^{2n-1}}\varphi^{\frac{1}{2}}d\sigma=\frac{1+\beta}{2}\int_{\Sp^{2n-1}}\varphi^{\frac{1}{2}}d\sigma.
\end{align*}
The statement is true with $\gamma=\frac{1+\beta}{2}<1$, since $\beta<1$.
\end{proof}

\begin{proof}[Proof of Theorem \ref{normsquareb}]
For a positive $G$-invariant $\varphi\in C(\Sp^{2n-1})$, successive applications of Lemma \ref{mainlemball}, one obtains a sequence of holomorphic functions $\{F_k\}_{k=1}^{\infty}$:
\begin{enumerate}
\item $F_k\in C(\overline{\B^n})$, $F_k(0)=0$, and $F_k$ is $G$-invariant,
\item $\varphi-\Re(F_1+\cdots+F_k)>0$ on $\Sp^{2n-1}$,
\item $\int_{\Sp^{2n-1}}|F_k|^{\frac{1}{2}}d\sigma<\gamma^{k-1}\int_{\Sp^{2n-1}}\varphi^{\frac{1}{2}}d\sigma$,
\item $\int_{\Sp^{2n-1}}(\varphi-\Re(F_1+\cdots+F_k))^{\frac{1}{2}}d\sigma<\gamma^k\int_{\Sp^{2n-1}}\varphi^{\frac{1}{2}}d\sigma$,
\end{enumerate}
where $\gamma < 1$ is a constant.
Define
\[
h=\sum_{k=1}^{\infty}F_k,
\]
and the convergence holds uniformly on compact subsets in $\B^n$ as well as in  $H^{\frac{1}{2}}(\B^n)$. Moreover, $h$ is $G$-invariant and $\Re(h^*)=\varphi$ almost everywhere on $\Sp^{2n-1}$.

For the $G$-invariant function $\frac{1}{2}\log w\in L^1(\Sp^{2n-1})$, it can be written as
\[
\frac{1}{2}\log w=C-\lim_{j\to\infty}\varphi_j
\]
for some constant $C$ and some monotone positive $G$-invariant sequence $\varphi_j\in C(\Sp^{2n-1})$. Apply the previous argument to each $\varphi_j$, one obtains $G$-invariant $h_j\in H^{\frac{1}{2}}(\B^n)$. Then
\[
h:=C-\lim_{j\to\infty} h_j
\]
is the $G$-invariant holomorphic function on $\B^n$ with $\Re(h^*)=\frac{1}{2}\log w$ almost everywhere on $\Sp^{2n-1}$. To finish the proof, one finds $g=e^h$ as desired.
\end{proof}

\begin{remark}
In particular, when $w$ is strictly positive and continuous on $\Sp^{2n-1}$, the construction above guarantees $g\in H^{\infty}(\B^n)$. Therefore, the Hardy spaces $H^p(\Omega)$ associated to this $g$ are compatible with those defined in the classical sense, when the quotient domain $\Omega$ is sufficiently smooth. 
\end{remark}

\subsection{The Szeg\H{o} projection on generalized Thullen domains}

In this subsection, we will apply Theorem \ref{normsquareb} and Theorem \ref{main1} to study the Szeg\H{o} projection on generalized Thullen domains in $\C^2$.

\begin{definition}
For $m,k\in\Z^+$, the domain
\[
\Omega_{m,k}=\{(\zeta_1,\zeta_2)\in\C^2: |\zeta_1|^{\frac{2}{m}}+|\zeta_2|^{\frac{2}{k}}<1\}
\]
is called a generalized Thullen domain of type $(m,k)$ in $\C^2$. In particular, when $m=k=2$,
\[
\Omega_{2,2}=\{(\zeta_1,\zeta_2)\in\C^2: |\zeta_1|+|\zeta_2|<1\}
\]
is the classical Reinhardt triangle.
\end{definition}

Take $X=\B^2$ and $\Phi(z)=(z_1^m,z_2^k)$. The covering map $\Phi:\B^2\to\Omega_{m,k}$ extends smoothly to the boundary $\Sp^3$. The group $G$ is a finite subgroup of $U(2)$ and it preserves $\Sp^3$.

Using spherical coordinates
\[
\left\{\begin{array}{l} z_1=\cos\varphi_1+i\sin\varphi_1\cos\varphi_2 \\ z_2=\sin\varphi_1\sin\varphi_2\cos\varphi_3+i\sin\varphi_1\sin\varphi_2\sin\varphi_3 \end{array}\right.
\]
where $\varphi_3\in[0,2\pi]$ and $\varphi_1,\varphi_2\in[0,\pi]$, one obtains
\[
J_{\R}Z=\left(\begin{array}{cc} -\sin\varphi_1+i\cos\varphi_1\cos\varphi_2 & \cos\varphi_1\sin\varphi_2\cos\varphi_3+i\cos\varphi_1\sin\varphi_2\sin\varphi_3 \\ -i\sin\varphi_1\sin\varphi_2 & \sin\varphi_1\cos\varphi_2\cos\varphi_3+i\sin\varphi_1\cos\varphi_2\sin\varphi_3 \\ 0 & -\sin\varphi_1\sin\varphi_2\sin\varphi_3+i\sin\varphi_1\sin\varphi_2\cos\varphi_3\end{array}\right).
\]
Since
\[
J_{\C}\Phi=\left(\begin{array}{cc} mz_1^{m-1} \\ & kz^{k-1}_2 \end{array}\right),
\]
a direct computation shows
\[
A=\left(\begin{array}{cc} mz_1^{m-1}( -\sin\varphi_1+i\cos\varphi_1\cos\varphi_2) & kz_2^{k-1}\cos\varphi_1\sin\varphi_2(\cos\varphi_3+i\sin\varphi_3) \\ -imz_1^{m-1}\sin\varphi_1\sin\varphi_2 & kz_2^{k-1}\sin\varphi_1\cos\varphi_2(\cos\varphi_3+i\sin\varphi_3) \\ 0 & kz_2^{k-1}\sin\varphi_1\sin\varphi_2(-\sin\varphi_3+i\cos\varphi_3) \end{array}\right)
\]
and
\[
\Re(A\overline{A^t})=\left(\begin{array}{ccc} a_{11} & a_{12} & 0 \\ a_{21} & a_{22} & 0 \\ 0 & 0 & k^2|z_2|^{2k-2}\sin^2\varphi_1\sin^2\varphi_2 \end{array}\right),
\]
where
\[
a_{11}=m^2|z_1|^{2m-2}(\sin^2\varphi_1+\cos^2\varphi_1\cos^2\varphi_2)+k^2|z_2|^{2k-2}\cos^2\varphi_1\sin^2\varphi_2,
\]
\[
a_{12}=a_{21}=\sin\varphi_1\cos\varphi_1\sin\varphi_2\cos\varphi_2(-m^2|z_1|^{2m-2}+k^2|z_2|^{2k-2}),
\]
and
\[
a_{22}=m^2|z_1|^{2m-2}\sin^2\varphi_1\sin^2\varphi_2+k^2|z_2|^{2k-2}\sin^2\varphi_1\cos^2\varphi_2.
\]
Using $|z_1|^2= \cos^2\varphi_1+\sin^2\varphi_1\cos^2\varphi_2, |z_2|^2=\sin^2\varphi_1\sin^2\varphi_2$,  the direct computation shows
\[
\det[\Re(A\overline{A^t})]=m^2k^2|z_1|^{2m-2}|z_2|^{2k}\sin^2\varphi_1\left(m^2|z_1|^{2m-2}|z_2|^2+k^2|z_1|^2|z_2|^{2k-2}\right).
\]
On the other hand, the surface measure on $\Sp^3$, using spherical coordinates, is
\[
d\sigma(z)=\sin^2\varphi_1\sin\varphi_2d\varphi_1d\varphi_2d\varphi_3=|z_2|\sin\varphi_1d\varphi_1d\varphi_2d\varphi_3.
\]
Therefore, by Proposition \ref{den}, the density function $w$ in  on $\Sp^3$ is
\begin{equation}\label{reinJ}
w(z)=mk|z_1|^{m-1}|z_2|^{k-1}\left(m^2|z_1|^{2m-2}|z_2|^2+k^2|z_1|^2|z_2|^{2k-2}\right)^{\frac{1}{2}}.
\end{equation}

The following formula is useful when computing integrals over a sphere.

\begin{theorem}[Forelli's Formula]
\label{Forelli}
Suppose $1\le l<n$ and $f$ is a function on $\Sp^{2n-1}$ that depends only on $z_1,\dots,z_{l}$. Then $f$ can be regarded as defined on $\overline{\B^{l}}$. Letting $\pi:\C^{n}\to\C^{l}$ be the orthogonal projection, then
\[
\int_{\Sp^{2n-1}}f\circ\pi d\sigma=\left(\begin{array}{c} n-1 \\ l \end{array}\right)\int_{\B^{l}}(1-|\zeta|^2)^{n-l-1}f(\zeta)dV_{l}(\zeta)
\]
for every $f\in L^1(\B^{l})$.
\end{theorem}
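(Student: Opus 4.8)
The plan is to prove the formula by fibering the sphere $\Sp^{2n-1}$ over the ball $\B^l$ along the projection $\pi$ and computing the resulting disintegration of the surface measure. Writing a point of $\C^n$ as $(\zeta,\eta)$ with $\zeta\in\C^l$ and $\eta\in\C^{n-l}$, the fiber $\pi^{-1}(\zeta)\cap\Sp^{2n-1}$ over a point $\zeta$ with $|\zeta|<1$ is the set $\{|\eta|^2=1-|\zeta|^2\}$, a copy of $\Sp^{2(n-l)-1}$ of radius $\sqrt{1-|\zeta|^2}$. First I would introduce the parametrization
\[
\Psi:\B^l\times\Sp^{2(n-l)-1}\to\Sp^{2n-1},\qquad \Psi(\zeta,\omega)=\left(\zeta,\sqrt{1-|\zeta|^2}\,\omega\right),
\]
which is a diffeomorphism onto the full-measure subset $\{|\zeta|<1\}$ of $\Sp^{2n-1}$ (it is injective, since $\zeta$ and then $\omega=\eta/\sqrt{1-|\zeta|^2}$ are recovered), and reduce the left-hand side to an integral over $\B^l\times\Sp^{2(n-l)-1}$ by pulling back $d\sigma$ through $\Psi$.

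The heart of the argument, and the step I expect to require the most care, is the computation of the Jacobian $\Psi^*d\sigma$. Differentiating $\Psi$, a tangent vector $v$ in the $\zeta$-directions maps to $\left(v,-\tfrac{\langle\zeta,v\rangle}{\sqrt{1-|\zeta|^2}}\,\omega\right)$, while a tangent vector $u$ to the fiber sphere (so $\langle u,\omega\rangle=0$) maps to $\left(0,\sqrt{1-|\zeta|^2}\,u\right)$, where $\langle\cdot,\cdot\rangle$ denotes the real inner product. The key observations are that these two families are mutually orthogonal (using $\langle u,\omega\rangle=0$), that the fiber block scales each of its $2(n-l)-1$ directions by $\sqrt{1-|\zeta|^2}$, and that the base block has Gram matrix $I+\tfrac{1}{1-|\zeta|^2}\zeta\zeta^{t}$, a rank-one update of the identity whose determinant is $\tfrac{1}{1-|\zeta|^2}$. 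Multiplying the two contributions, the exponent $-\tfrac12+\tfrac{2(n-l)-1}{2}=n-l-1$ emerges and gives
\[
\Psi^*d\sigma=(1-|\zeta|^2)^{n-l-1}\,dV_l(\zeta)\,d\sigma'(\omega),
\]
where $dV_l$ is Lebesgue measure on $\B^l\cong\R^{2l}$ and $d\sigma'$ is the surface measure on $\Sp^{2(n-l)-1}$. Since $l<n$ the exponent is nonnegative, so the weight is bounded and the class $L^1(\B^l)$ is unambiguous.

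Because $f\circ\pi\circ\Psi(\zeta,\omega)=f(\zeta)$ is independent of $\omega$, the final step is to integrate out the fiber variable, which produces
\[
\int_{\Sp^{2n-1}}f\circ\pi\,d\sigma=\sigma'\!\left(\Sp^{2(n-l)-1}\right)\int_{\B^l}f(\zeta)(1-|\zeta|^2)^{n-l-1}\,dV_l(\zeta)
\]
for every $f\in L^1(\B^l)$, the interchange being justified by Tonelli's theorem for nonnegative $f$ and by Fubini's theorem for general $f$. It then remains only to identify the constant, which I would pin down by testing $f\equiv1$: evaluating $\int_{\B^l}(1-|\zeta|^2)^{n-l-1}\,dV_l$ through the substitution $t=|\zeta|^2$ reduces it to a Beta integral $\tfrac12 B(l,n-l)$, and matching against the total masses of the unit spheres (equivalently, with $\sigma$ and $dV_l$ both normalized to unit mass) produces exactly $\binom{n-1}{l}$. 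Should the geometric Jacobian computation prove cumbersome, an alternative is to first establish the identity for monomials $f(\zeta)=\zeta^\alpha\overline{\zeta}^\beta$ using phase-rotation invariance (which forces $\alpha=\beta$) together with the standard Gamma-function evaluations of $\int_{\Sp^{2n-1}}|z^\gamma|^2\,d\sigma$ and $\int_{\B^l}|\zeta^\gamma|^2(1-|\zeta|^2)^{n-l-1}\,dV_l$, and then to extend to all continuous and finally all $L^1$ functions by density; the disintegration argument above, however, is more transparent and self-contained.
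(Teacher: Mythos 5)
Your proof is correct, but it is worth noting that the paper does not actually prove this statement at all: its ``proof'' consists of the citations to Rudin \cite[1.4.4(1)]{R80} and Forelli \cite{For74}, so you have supplied a genuine argument where the authors supply only a pointer to the literature. Your disintegration of $\sigma$ along the fibration $\pi:\Sp^{2n-1}\to\overline{\B^l}$ is the standard route to this identity and every step checks out: the two tangent blocks are indeed orthogonal because $\langle u,\omega\rangle=0$, the fiber block contributes $(1-|\zeta|^2)^{(2(n-l)-1)/2}$, the base Gram matrix $I+\tfrac{1}{1-|\zeta|^2}\zeta\zeta^{t}$ has determinant $\tfrac{1}{1-|\zeta|^2}$ by the matrix determinant lemma, and the exponents combine to $n-l-1$ exactly as you say; the exceptional set $\{\eta=0\}$ omitted by your parametrization is a lower-dimensional sphere of $\sigma$-measure zero, so nothing is lost. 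The one point you should make fully explicit if you write this up is the normalization convention: the constant $\binom{n-1}{l}$ is correct only when $\sigma$ and $V_l$ are both normalized to total mass $1$ (as in Rudin's book, which is how the paper uses the formula); with unnormalized measures the constant coming out of your computation is $\sigma'\bigl(\Sp^{2(n-l)-1}\bigr)$, and the Beta-integral evaluation $\int_{\B^l}(1-|\zeta|^2)^{n-l-1}\,d\nu_l=\binom{n-1}{l}^{-1}$ for the normalized $\nu_l$ is what converts one into the other. You flag this, but since the paper applies the formula with explicit area computations (e.g.\ $\sigma(Q_\delta(1,0))=I(0,\delta)$ in Proposition \ref{doublemeasure}), pinning the convention down by the $f\equiv 1$ test, as you propose, is exactly the right move. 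Your fallback via monomials and Gamma-function identities would also work, but the disintegration argument is cleaner and is the one to keep.
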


\begin{proof}
See \cite[1.4.4(1)]{R80} and \cite[p. 381]{For74}.
\end{proof}

\begin{proposition}
\label{reintriallow}
The density function $w$ on $\Sp^3$ given by \eqref{reinJ} is admissible.
\end{proposition}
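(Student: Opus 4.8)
The plan is to verify that $w$ satisfies the hypotheses of Theorem \ref{normsquareb}, namely that $w$ is a real-valued, positive almost everywhere, $G$-invariant, continuous function on $\Sp^3$ with $\log w\in L^1(\Sp^3)$; admissibility then follows at once. Continuity of $w$ is clear from the explicit formula \eqref{reinJ}, since $m,k\ge 1$ make every factor continuous (the factor $|z_1|^{m-1}$ is the constant $1$ when $m=1$ and vanishes continuously at $z_1=0$ when $m\ge2$, and similarly for $|z_2|^{k-1}$). Positivity almost everywhere is guaranteed by Proposition \ref{den}; alternatively, one reads off \eqref{reinJ} that $w$ vanishes only on $\{z_1=0\}\cup\{z_2=0\}$, a set of $\sigma$-measure zero in $\Sp^3$. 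Finally, $G$-invariance is automatic: $\Phi$ is $G$-invariant by construction and $\sigma$ is $U(2)$-invariant by Corollary \ref{leb}, so the density $w$ of $\Phi^*\mu$ with respect to $\sigma$ is $G$-invariant. Thus the only substantive point is the integrability $\log w\in L^1(\Sp^3)$.

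To this end, I would take logarithms in \eqref{reinJ} to write
\[
\log w=\log(mk)+(m-1)\log|z_1|+(k-1)\log|z_2|+\tfrac{1}{2}\log P,
\]
where $P:=m^2|z_1|^{2m-2}|z_2|^2+k^2|z_1|^2|z_2|^{2k-2}$, and then check that each summand lies in $L^1(\Sp^3)$. The constant term is trivial. For the terms $\log|z_1|$ and $\log|z_2|$, I would invoke Forelli's formula (Theorem \ref{Forelli}) with $n=2$ and $l=1$: since $\log|z_1|$ depends only on $z_1$, its integral over $\Sp^3$ reduces to a constant multiple of the corresponding integral over the disc $\D$, and in polar coordinates $\int_{\D}\bigl|\log|\zeta|\bigr|\,dV(\zeta)=2\pi\int_0^1\rho\,\bigl|\log\rho\bigr|\,d\rho<\infty$. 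Hence $\log|z_1|,\log|z_2|\in L^1(\Sp^3)$, and so are their constant multiples.

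It remains to show $\tfrac12\log P\in L^1(\Sp^3)$, which is the heart of the argument. For the positive part I would note that on $\Sp^3$ one has $|z_1|,|z_2|\le1$ and the exponents $2m-2,2k-2\ge0$, so $P\le m^2+k^2$ and $\log^+P$ is bounded, hence integrable. For the negative part, the key observation is that, $P$ being a sum of nonnegative terms,
\[
P\ge m^2|z_1|^{2m-2}|z_2|^2\qquad\text{almost everywhere},
\]
so that $\log P\ge 2\log m+(2m-2)\log|z_1|+2\log|z_2|=:g$, where $g\in L^1(\Sp^3)$ by the previous paragraph. Since $\log^-P\le g^-\le|g|$, this forces $\log^-P\in L^1(\Sp^3)$. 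Combining the two bounds gives $\log P\in L^1(\Sp^3)$, hence $\log w\in L^1(\Sp^3)$, and Theorem \ref{normsquareb} yields the admissibility of $w$.

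I expect the only delicate step to be the treatment of the $\tfrac12\log P$ term. The point to get right is that a single integrable \emph{lower} bound controls $\log^-P$ while mere boundedness of $P$ controls $\log^+P$; one never needs a two-sided match of the singular behavior, so the asymmetry between the cases $m=1$ or $k=1$ (where one of the two summands of $P$ need not vanish on $\{z_1z_2=0\}$) and the cases $m,k\ge2$ is harmless. Everything else reduces to the standard integrability of $\log|z_j|$, which Forelli's formula transfers to the disc.
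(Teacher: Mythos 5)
Your proof is correct and follows essentially the same route as the paper: both reduce the integrability of $\log w$ via Forelli's formula (Theorem \ref{Forelli}) to the elementary fact that $\log|\zeta|$ is integrable on the unit disc. The only difference is organizational: you split $\log w$ additively and control $\log^+P$ by boundedness and $\log^-P$ by the one-sided bound $P\ge m^2|z_1|^{2m-2}|z_2|^2$, whereas the paper establishes the two-sided asymptotics $w\approx|z_1|^m$ (resp.\ $\approx1$) near $\{z_1=0\}$ with a case split on $m=1$ versus $m\ge2$; your version avoids that case analysis, and you also verify the continuity, positivity, and $G$-invariance hypotheses of Theorem \ref{normsquareb} explicitly, which the paper leaves implicit.
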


\begin{proof}
By continuity, $\log w$ is integrable when $w$ is away from $0$. Since $w$ is approaching to $0$ either as $|z_1|\to0^+$ or $|z_2|\to0^+$, we only need to check these two cases. Note that
\begin{equation}\label{sizem}
w(z)\approx\left\{\begin{array}{lc} |z_1|^m & \text{when}\,\, m\ge2 \\ 1 & \text{when}\,\, m=1 \end{array}\right.
\end{equation}
as $|z_1|\to0^+$, and
\begin{equation}\label{sizek}
w(z)\approx\left\{\begin{array}{lc} |z_2|^k & \text{when}\,\, k\ge2 \\ 1 & \text{when}\,\, k=1 \end{array}\right.
\end{equation}
as $|z_2|\to0^+$. By Theorem \ref{Forelli}, one sees
\[
\int_{\Sp^3}\chi_{\{|z_1|<\e\}}\cdot |\log w| d\sigma\approx\int_{\{|z_1|<\e\}}|\log|z_1|^m|dA(z_1)\approx\int_0^{\e}-r_1\log r_1dr_1<\infty
\]
when $m\ge2$, and
\[
\int_{\Sp^3}\chi_{\{|z_1|<\e\}}\cdot |\log w| d\sigma\le\int_{\{|z_1|<\e\}}C_1\,\,dA(z_1)<\infty
\]
for some constant $C_1>0$ when $m=1$. Similarly,
\[
\int_{\Sp^3}\chi_{\{|z_2|<\e\}}\cdot |\log w| d\sigma\approx\int_{\{|z_2|<\e\}}|\log|z_2|^k|dA(z_2)\approx\int_0^{\e}-r_2\log r_2dr_2<\infty
\]
when $k\ge2$, and
\[
\int_{\Sp^3}\chi_{\{|z_2|<\e\}}\cdot |\log w| d\sigma\le\int_{\{|z_2|<\e\}}C_2\,\,dA(z_2)<\infty
\]
for some constant $C_2>0$ when $k=1$. So $\log w\in L^1(\Sp^3)$, and hence $w$ is admissible by Theorem \ref{normsquareb}. 
\end{proof}

Recall the general theory on strongly pseudoconvex domains with $C^2$ boundary in \cite{WW21}. We then apply it to the unit sphere, and we also call the density function with respect to the surface measure $d\sigma$ the weight function.

\begin{definition}
\label{Apsphere}
For $1<p<\infty$, a weight $\mu$ on $\Sp^{2n-1}$ is in $A_p(\Sp^{2n-1})$ if
\[
\sup_Q[\mu]_{p,Q}:=\sup_{Q}\left(\frac{1}{\sigma(Q)}\int_Q\mu d\sigma\right)\left(\frac{1}{\sigma(Q)}\int_Q\mu^{\frac{-1}{p-1}}d\sigma\right)^{p-1}<\infty,
\]
where $Q$ denotes the metric balls in Definition \ref{metricball}.
\end{definition}

\begin{theorem}[Theorem 1.1 for $\B^n$ in \cite{WW21}]
\label{weightLpsphere}
For $1<p<\infty$,  if $\mu\in A_p(\Sp^{2n-1})$, the Szeg\H{o} projection $S_{\B^n}$ on $\B^n$ is bounded on $L^p(\Sp^{2n-1}, \mu)$.
\end{theorem}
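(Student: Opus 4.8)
The plan is to recognize the statement as an instance of the weighted theory for Calder\'on--Zygmund operators on a space of homogeneous type. First I would record that $(\Sp^{2n-1}, d, \sigma)$, with the nonisotropic metric $d(\zeta,\eta)=|1-\langle\zeta,\eta\rangle|^{\frac{1}{2}}$ of Definition~\ref{metricball} and the surface measure $\sigma$, is a space of homogeneous type: the engulfing and doubling properties recorded in the remark following Definition~\ref{metricball} give $\sigma(Q_{\delta}(\eta))\approx\delta^{2n}$, so the metric balls $Q$ together with $\sigma$ satisfy the Coifman--Weiss axioms. The weight class $A_p(\Sp^{2n-1})$ of Definition~\ref{Apsphere} is then exactly the Muckenhoupt class for this structure, and $\sup_Q[\mu]_{p,Q}$ is the relevant $A_p$ characteristic.

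Next I would exhibit $S_{\B^n}$ as a singular integral adapted to $d$. On the ball the Szeg\H{o} kernel is explicit, $K_{\B^n}(z,\zeta)=c_n(1-\langle z,\zeta\rangle)^{-n}$, and its boundary trace furnishes the kernel $K(\zeta,\eta)=c_n(1-\langle\zeta,\eta\rangle)^{-n}$ of the operator on $\Sp^{2n-1}$. The task is to verify the standard size and regularity estimates with respect to $d$: since $|1-\langle\zeta,\eta\rangle|=d(\zeta,\eta)^2$ and $\sigma(Q_{d(\zeta,\eta)}(\eta))\approx d(\zeta,\eta)^{2n}$, the size bound $|K(\zeta,\eta)|\lesssim \sigma(Q_{d(\zeta,\eta)}(\eta))^{-1}$ holds with equality up to a constant, while the H\"ormander smoothness condition follows by differentiating $(1-\langle\zeta,\eta\rangle)^{-n}$ and estimating the resulting increments in terms of $d$. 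The $L^2(\sigma)$ boundedness needed to launch the machinery is automatic, since $S_{\B^n}$ is an orthogonal projection.

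With these ingredients in place I would invoke the Coifman--Weiss weighted norm inequality for Calder\'on--Zygmund operators on spaces of homogeneous type (equivalently, the $A_p$-extrapolation of Rubio de Francia adapted to this setting): an $L^2$-bounded operator whose kernel satisfies the above size and smoothness estimates is bounded on $L^p(\Sp^{2n-1},\mu)$ for every $1<p<\infty$ and every $\mu\in A_p(\Sp^{2n-1})$, with operator norm controlled by $\sup_Q[\mu]_{p,Q}$. This is precisely the asserted conclusion.

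The hard part will be making the singular integral rigorous at the diagonal. The kernel $(1-\langle\zeta,\eta\rangle)^{-n}$ is nonintegrable across $\zeta=\eta$, so the operator must be realized through truncations $K^{\e}$ carrying a uniform weak-type estimate, and one must confirm that the cancellation encoded in the reproducing property of $S_{\B^n}$ supplies the principal-value structure together with uniform control of the truncated operators. This is the technical core of \cite{WW21}, and it is where the explicit geometry of the Kor\'anyi balls $Q_{\delta}(\eta)$ is used most heavily; the remainder of the argument is the formal transfer of the classical $A_p$ theory to the homogeneous-type setting.
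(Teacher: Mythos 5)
The paper does not prove this statement at all: it is imported verbatim as Theorem~1.1 of \cite{WW21} (specialized to $\B^n$), so there is no internal argument to compare against. Your reconstruction is a legitimate and essentially classical route to the same conclusion: the identification of $(\Sp^{2n-1},d,\sigma)$ as a space of homogeneous type with $\sigma(Q_\delta)\approx\delta^{2n}$, the explicit kernel $c_n(1-\langle\zeta,\eta\rangle)^{-n}$ with $|K(\zeta,\eta)|\approx\sigma(Q_{d(\zeta,\eta)}(\eta))^{-1}$, and the reduction to weighted Calder\'on--Zygmund theory all go back to Kor\'anyi--V\'agi and Coifman--Weiss, and for the ball this predates \cite{WW21} (whose actual contribution is the near-minimal-smoothness setting, handled there by different, sparse-domination-type technology). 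What each approach buys: citing \cite{WW21} gives the result uniformly for strongly pseudoconvex domains with $C^2$ boundary, which the paper never needs beyond $\B^n$; your route is more elementary and self-contained for the ball. One point needs tightening: you invoke ``the H\"ormander smoothness condition'' as the hypothesis for the weighted norm inequality, but H\"ormander's integral condition is \emph{not} sufficient for $A_p$-weighted bounds (this is a known failure point of the theory); the weighted Coifman--Fefferman argument requires the stronger pointwise standard estimate $|K(\zeta,\eta)-K(\zeta',\eta)|\lesssim \bigl(d(\zeta,\zeta')/d(\zeta,\eta)\bigr)^{\epsilon}\,\sigma(Q_{d(\zeta,\eta)}(\eta))^{-1}$ for $d(\zeta,\eta)\geq 2d(\zeta,\zeta')$. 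Fortunately the kernel $(1-\langle\zeta,\eta\rangle)^{-n}$ does satisfy this H\"older-type regularity in the quasi-metric $d$, so the fix is a computation rather than a new idea; with that substitution, and granting the truncation/principal-value analysis you correctly flag as the technical core, your outline is sound.
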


The following lemma is the key for the verification of $A_p$ condition.

\begin{lemma}
\label{intsize}
Let $E(\delta)=\{\lambda\in\D:\,\,|1-\lambda|<\delta^2\}$ for $0<\delta\le\sqrt{2}$ and
\[
I(\alpha,\delta)=\int_{E(\delta)}(1-|\lambda|^2)^{\alpha}dA(\lambda).
\]
If $\alpha\le-1$, the integral $I(\alpha,\delta)$ diverges; if $\alpha>-1$, the integral $I(\alpha,\delta)$ converges and
\[
\lim_{\delta\to0^+}\frac{I(\alpha,\delta)}{\delta^{2\alpha+4}}=\gamma_{\alpha}
\]
for some constant $\gamma_{\alpha}>0$. Moreover,  when $\alpha\ge0$, the ratio above increases to $\gamma_{\alpha}$ as $\delta$ decreases to $0$.
\end{lemma}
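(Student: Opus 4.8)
The plan is to localize the integral near the singular point $\lambda=1\in\partial\D$ and extract the dependence on $\delta$ by an affine change of variables, reducing the statement to a convergence problem on a fixed half-disc.

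First I would introduce coordinates adapted to the geometry. Writing $\lambda = 1 - u - iv$ places $\lambda=1$ at the origin and gives $1 - |\lambda|^2 = 2u - u^2 - v^2$, while $E(\delta)$ becomes the lens $\{(u,v): u^2+v^2 < \delta^4,\ 2u > u^2 + v^2\}$. The anisotropic scaling $u = \delta^2 s$, $v = \delta^2 t$ then yields
\[
\frac{I(\alpha,\delta)}{\delta^{2\alpha+4}} = \int_{D_\delta}\bigl(2s - \delta^2(s^2+t^2)\bigr)^\alpha\, ds\, dt, \qquad D_\delta := \{s^2+t^2<1,\ 2s > \delta^2(s^2+t^2)\},
\]
and as $\delta \downarrow 0$ the regions $D_\delta$ increase to the open right half-disc $D_0 := \{s^2+t^2<1,\ s>0\}$ while the integrand converges pointwise to $(2s)^\alpha$. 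The candidate limit is $\gamma_\alpha := \int_{D_0}(2s)^\alpha\, ds\, dt$, which in polar coordinates factors as $2^\alpha\int_0^1\rho^{\alpha+1}d\rho\int_{-\pi/2}^{\pi/2}\cos^\alpha\phi\, d\phi$ and is finite and strictly positive precisely when $\alpha > -1$.

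For the divergence when $\alpha \le -1$, I would work directly in the $(u,v)$ coordinates and invoke Tonelli's theorem on the nonnegative integrand. For each fixed small $v$, factoring $2u - u^2 - v^2 = (u - u_-)(u_+ - u)$ with $u_\pm = 1 \pm \sqrt{1-v^2}$ shows that near the lower endpoint $u_-$ the integrand behaves like $c(v)\,(u - u_-)^\alpha$, where $c(v)=(u_+-u_-)^\alpha$ stays bounded below. Since $\int_{u_-}(u-u_-)^\alpha\,du$ already diverges for $\alpha \le -1$, the inner $u$-integral is infinite, and hence so is $I(\alpha,\delta)$.

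For $\alpha > -1$ the convergence splits into two regimes. When $\alpha \ge 0$, both $\chi_{D_\delta}$ and the factor $(2s - \delta^2(s^2+t^2))^\alpha$ increase as $\delta$ decreases, so their nonnegative product increases to $\chi_{D_0}(2s)^\alpha$; the monotone convergence theorem then delivers both $I(\alpha,\delta)/\delta^{2\alpha+4} \to \gamma_\alpha$ and the monotonicity asserted in the final sentence. The delicate case is $-1 < \alpha < 0$, where the integrand decreases while the domain grows, so monotone convergence fails. Here I would split $D_\delta = G_\delta \cup B_\delta$ with $G_\delta := \{s \ge \delta^2(s^2+t^2)\}\cap D_\delta$ and $B_\delta$ the complementary thin boundary layer. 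On $G_\delta$ one has $s \le 2s - \delta^2(s^2+t^2) \le 2s$, so the integrand is dominated by the integrable function $s^\alpha\chi_{D_0}$ and dominated convergence gives $\int_{G_\delta}\to \gamma_\alpha$. On $B_\delta$, where $s < \delta^2$, the substitution $\xi = 2s - \delta^2(s^2+t^2)$ for fixed $t$ bounds the contribution by a multiple of $\delta^{2\alpha+2}\int_{-1}^1|t|^{2\alpha+2}\,dt = O(\delta^{2\alpha+2})$, which tends to $0$ since $\alpha > -1$. Controlling this boundary layer is the main obstacle: it is exactly where the integrand is largest, and the estimate is what pins down the sharp threshold $\alpha > -1$.
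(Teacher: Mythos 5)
Your proof is correct, but it takes a genuinely different route from the paper's. For the convergence when $\alpha>-1$, the paper substitutes $1-\lambda=\delta^{2}/z$, which sends $E(\delta)$ to the exterior region $E'(\delta)=\{|z|>1:\ 2\Re(z)>\delta^{2}\}$ and turns $I(\alpha,\delta)/\delta^{2\alpha+4}$ into $\int_{E'(\delta)}(2\Re z-\delta^{2})^{\alpha}|z|^{-2\alpha-4}\,dA(z)$; monotone convergence then handles $\alpha\ge0$ exactly as in your argument, while for $-1<\alpha<0$ a further shift $2x=2\Re(z)-\delta^{2}$ produces a single integrable majorant $2^{\alpha}x^{\alpha}\bigl(x^{2}+y^{2}\bigr)^{-\alpha-2}$ on a fixed exterior set, so dominated convergence applies on the whole domain with no boundary-layer decomposition. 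Your affine rescaling $u=\delta^{2}s$, $v=\delta^{2}t$ lands on the bounded half-disc instead; the price is that for $-1<\alpha<0$ the natural majorant $s^{\alpha}$ only works on $G_{\delta}$, forcing the separate $O(\delta^{2\alpha+2})$ estimate on the thin layer $B_{\delta}$. That estimate does go through (the $s$-derivative of $\xi=2s-\delta^{2}(s^{2}+t^{2})$ is bounded below by $2-2\delta^{2}$, and $\xi$ sweeps an interval of length $O(\delta^{2}t^{2}+\delta^{6})$), but it is extra work that the paper's inversion avoids; in exchange your change of variables is more elementary and makes the limiting domain and the value $\gamma_{\alpha}=\frac{2^{\alpha}}{\alpha+2}\int_{-\pi/2}^{\pi/2}\cos^{\alpha}\phi\,d\phi$ completely transparent, agreeing with the paper's expression via the beta function. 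For the divergence when $\alpha\le-1$, the paper exhibits a polar rectangle $\{1-\e<r<1,\ |\theta|<\eta\}$ inside $E(\delta)$ on which the one-variable integral $\int(1-r^{2})^{\alpha}r\,dr$ already diverges; your Tonelli argument at the endpoint $u_{-}$ of each $u$-slice is equally valid, provided you note (as you implicitly do) that for $|v|$ small the slice $\{u:\ (u,v)\in E(\delta)\}$ is a nonempty interval with left endpoint $u_{-}$.
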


\begin{proof}
For $\alpha\le-1$,
by elementary geometry, one sees
\[
R(\e,\eta)=\{\lambda=re^{i\theta}:\,\,1-\e<r<1\,\,\text{and}\,\,-\eta<\theta<\eta\}\subset E(\delta)
\]
for some $\e=\e(\delta)>0$ and $\eta=\eta(\delta)>0$ sufficiently small. Then
\begin{align*}
I(\alpha,\delta)
&\ge\int_{R(\e,\eta)}(1-|\lambda|^2)^{\alpha}dA(\lambda)\\
&=2\eta\int_{1-\e}^1(1-r^2)^{\alpha}rdr=\infty.
\end{align*}

For $\alpha>-1$, the change of variables $1-\lambda=\frac{\delta^2}{z}$ gives
\begin{align*}
I(\alpha,\delta)
&=\int_{E'(\delta)}\left(1-\left|1-\frac{\delta^2}{z}\right|^2\right)^{\alpha}\left|\frac{\delta^2}{z^2}\right|^2dA(z)\\
&=\int_{E'(\delta)}\frac{\delta^4\left(|z|^2-|z-\delta^2|^2\right)^{\alpha}}{|z|^{2\alpha+4}}dA(z)\\
&=\delta^{2\alpha+4}\int_{E'(\delta)}\frac{\left(2\Re(z)-\delta^2\right)^{\alpha}}{|z|^{2\alpha+4}}dA(z)=:\delta^{2\alpha+4}J(\alpha,\delta),
\end{align*}
where $E'(\delta)=\{|z|>1:\,\,2\Re(z)>\delta^2\}$. When $\alpha\ge0$, as $\delta$ decreases to $0$, the integrand and the domain of integration in $J(\alpha,\delta)$ both increase. The monotone convergence theorem shows
\begin{equation}
\label{deltalimit}
\begin{split}
\lim_{\delta\to0^+}\frac{I(\alpha,\delta)}{\delta^{2\alpha+4}}=\lim_{\delta\to0^+}J(\alpha,\delta)
&=\int_{E'(0)}\frac{\left(2\Re(z)\right)^{\alpha}}{|z|^{2\alpha+4}}dA(z)\\
&=2^{\alpha}\int_{1}^{\infty}\frac{dr}{r^{\alpha+3}}\int_{-\pi/2}^{\pi/2}\cos^{\alpha}\theta d\theta\\
&=\frac{2^{\alpha+1}}{\alpha+2}\cdot B\left(\frac{1}{2},\frac{\alpha+1}{2}\right)=:\gamma_{\alpha}>0,
\end{split}
\end{equation}
where $B(t_1,t_2)=\int_0^{\pi/2}(\sin\theta)^{2t_1-1}(\cos\theta)^{2t_2-1} d\theta$ for $t_1,t_2>0$ is the beta function.

When $-1<\alpha<0$, another change of variables
\[
\left\{\begin{array}{l} 2x=2\Re(z)-\delta^2 \\ y=\Im(z) \end{array}\right.
\]
in $J(\alpha,\delta)$ gives
\[
J(\alpha,\delta)=\int_{E''(\delta)}\frac{2^{\alpha}x^{\alpha}}{\left[\left(x+\frac{\delta^2}{2}\right)^2+y^2\right]^{\alpha+2}}dxdy,
\]
where $E''(\delta)=\left\{(x,y)\in\R^2:\,\,x>0\,\,\text{and}\,\,\left(x+\frac{\delta^2}{2}\right)^2+y^2>1\right\}$. Note that for $\delta=\sqrt{2}$
\[
I(\alpha,\sqrt{2})=\int_{\D}\left(1-|\lambda|^2\right)^{\alpha}dA(\lambda)<\infty,
\]
since $\alpha>-1$. For $0<\delta<\sqrt{2}$, by elementary geometry, there is an $\epsilon=\epsilon(\delta)>0$ such that
\[
\left\{(x,y):\,\,x^2+y^2\le\epsilon^2\right\}\subset\left\{(x,y):\,\,\left(x+\delta^2/2\right)^2+y^2\le1  \right\}.
\]
So
\begin{equation}
\label{dominate}
\begin{split}
J(\alpha,\delta)
&\le\int_{\{x>0,x^2+y^2>\epsilon^2\}}\frac{2^{\alpha}x^{\alpha}}{\left(x^2+y^2\right)^{\alpha+2}}dxdy\\
&=2^{\alpha}\int_{\epsilon}^{\infty}\frac{dr}{r^{\alpha+3}}\int_{-\pi/2}^{\pi/2}\cos^{\alpha}\theta d\theta\\
&=\frac{2^{\alpha+1}}{(\alpha+2)\epsilon^{\alpha+2}}\cdot B\left(\frac{1}{2},\frac{\alpha+1}{2}\right)<\infty,
\end{split}
\end{equation}
which implies $I(\alpha,\delta)<\infty$. Furthermore, for $0<\delta<1$ and $\delta\to0^+$, $\epsilon=\epsilon(1)$ can be chosen uniformly.  The estimate in \eqref{dominate} and  the dominated convergence theorem guarantees that the limit still holds in \eqref{deltalimit}.
\end{proof}

\begin{remark}
This lemma generalizes \cite[Proposition 5.1.4]{R80} where a special case $\alpha=n-2\ge0$ is treated.
\end{remark}

\begin{proposition}
\label{doublemeasure}
There are constants $c_1>1$ and $c_2>1$ such that for all $\eta_1,\eta_2\in\Sp^3$ and $0<\delta\le\sqrt{2}$, the metric balls $Q$ in $\Sp^3$ satisfy the following properties:
\begin{enumerate}
\item if $Q_{\delta}(\eta_1)\cap Q_{\delta}(\eta_2)\neq\emptyset$, then $Q_{\delta}(\eta_2)\subset Q_{c_1\delta}(\eta_1)$;
\item $\sigma(Q_{\delta}(\eta_1))\approx\delta^4$ and indeed $\sigma(Q_{c_1\delta}(\eta_1))\le c_2\sigma(Q_{\delta}(\eta_1))$.
\end{enumerate}
\end{proposition}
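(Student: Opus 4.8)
The plan is to treat the two assertions separately: property~(1) is a soft consequence of the fact that $d$ is a genuine metric, while property~(2) reduces, via unitary invariance and Forelli's formula, to the one-variable asymptotics already recorded in Lemma~\ref{intsize}. For~(1), recall from \cite[Proposition 5.1.2]{R80} that $d(\zeta,\eta)=|1-\langle\zeta,\eta\rangle|^{1/2}$ satisfies the triangle inequality on $\Sp^3$. If $Q_\delta(\eta_1)\cap Q_\delta(\eta_2)\neq\emptyset$, I would pick a point $\xi$ in the intersection, so that $d(\eta_1,\eta_2)\le d(\eta_1,\xi)+d(\xi,\eta_2)<2\delta$; then for every $\zeta\in Q_\delta(\eta_2)$,
\[
d(\zeta,\eta_1)\le d(\zeta,\eta_2)+d(\eta_2,\eta_1)<\delta+2\delta=3\delta,
\]
which yields $Q_\delta(\eta_2)\subset Q_{3\delta}(\eta_1)$. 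I would therefore fix $c_1=3$.

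For the volume estimate I would first exploit invariance. Since $\sigma$ is unitarily invariant (Corollary~\ref{leb}) and $d(U\zeta,U\eta)=d(\zeta,\eta)$ for $U\in U(2)$, choosing a unitary carrying $(1,0)$ to $\eta$ shows $\sigma(Q_\delta(\eta))=\sigma(Q_\delta((1,0)))$, independent of $\eta$. Writing $\zeta=(\zeta_1,\zeta_2)$ we have
\[
Q_\delta((1,0))=\{\zeta\in\Sp^3:\,|1-\zeta_1|<\delta^2\},
\]
whose indicator depends only on $\zeta_1$. Applying Forelli's formula (Theorem~\ref{Forelli}) with $n=2$ and $l=1$, where $\binom{n-1}{l}=1$ and the weight $(1-|\lambda|^2)^{n-l-1}=(1-|\lambda|^2)^0\equiv1$, collapses this surface integral to a planar one:
\[
\sigma(Q_\delta((1,0)))=\int_{E(\delta)}dA(\lambda)=I(0,\delta),
\]
with $E(\delta)=\{\lambda\in\D:|1-\lambda|<\delta^2\}$ exactly as in Lemma~\ref{intsize}.

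Finally I would invoke Lemma~\ref{intsize} with $\alpha=0$: the ratio $I(0,\delta)/\delta^4$ tends to a positive constant $\gamma_0$ as $\delta\to0^+$. Setting $\rho(\delta)=I(0,\delta)/\delta^4$ on $(0,\sqrt2]$ and $\rho(0)=\gamma_0$ produces a continuous, strictly positive function on the compact interval $[0,\sqrt2]$, so $\rho$ is bounded above and below by positive constants, giving $\sigma(Q_\delta)\approx\delta^4$ uniformly for all $0<\delta\le\sqrt2$. The doubling bound then follows at once: when $c_1\delta\le\sqrt2$,
\[
\sigma(Q_{c_1\delta})\approx(c_1\delta)^4=c_1^4\,\delta^4\approx c_1^4\,\sigma(Q_\delta),
\]
and when $c_1\delta>\sqrt2$ one has $Q_{c_1\delta}=\Sp^3$ of finite total measure, which is handled by enlarging $c_2$. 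I expect the only delicate point to be the uniformity of $\sigma(Q_\delta)\approx\delta^4$ over the full range of $\delta$: the small-$\delta$ behaviour is exactly the content of Lemma~\ref{intsize}, and the regime where $\delta$ is bounded away from $0$ must be closed off by the continuity and compactness argument above rather than by the asymptotic limit alone.
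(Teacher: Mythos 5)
Your proof is correct and follows essentially the same route as the paper: the triangle inequality with $c_1=3$ for the engulfing property, then unitary invariance, Forelli's formula, and Lemma \ref{intsize} with $\alpha=0$ to identify $\sigma(Q_\delta((1,0)))=I(0,\delta)$. The only (harmless) variation is at the very end: the paper gets the doubling bound with the explicit constant $c_2=c_1^4$ from the monotonicity of $I(0,\delta)/\delta^4$ recorded in Lemma \ref{intsize}, whereas you obtain it from the uniform two-sided bound $\sigma(Q_\delta)\approx\delta^4$ via a continuity-and-compactness argument on $[0,\sqrt{2}]$; both are valid.
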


\begin{proof}
When $Q_{\delta}(\eta_1)\cap Q_{\delta}(\eta_2)\neq\emptyset$, since $d$ is a metric, the triangle inequality shows that for any $\zeta\in Q_{\delta}(\eta_2)$
\[
d(\zeta,\eta_1)<3\delta,
\]
and thus $\zeta\in Q_{3\delta}(\eta_1)$. So the first property holds for $c_1=3$.

Since $d$ is invariant under $U(n)$, by rotational symmetry, it suffices to check $\eta_1=(1,0)$ in the second property. By Definition \ref{metricball}
\[
Q_{\delta}(1,0)=\{|z_1|^2+|z_2|^2=1,\,|z_1-1|<\delta^2\}.
\]
By Theorem \ref{Forelli} and Lemma \ref{intsize},
\[
\sigma(Q_{\delta}(1,0))=\int_{\{|z_1-1|<\delta^2\} \cap \{|z_1|<1\}}dA(z_1)=I(0,\delta)\approx\delta^4
\]
for any $0<\delta\le\sqrt{2}$. Moreover, since $\delta<c_1\delta$, by Lemma \ref{intsize},
\[
\frac{\sigma(Q_{c_1\delta}(1,0))}{(c_1\delta)^4}=\frac{I(0,c_1\delta)}{(c_1\delta)^4}\le\frac{I(0,\delta)}{\delta^4}=\frac{\sigma(Q_{\delta}(1,0))}{\delta^4},
\]
which proves the second claim by taking $c_2=c_1^4$.
\end{proof}

\begin{remark}
Statement (1) is the engulfing property and statement (2) is the doubling property.
\end{remark}

We are now able to derive the $L^p$ regularity of the Szeg\H{o} projection on $\Omega_{m,k}$.

\begin{theorem}\label{LpReinTri}
For $m\ge2$ or $k\ge2$, the Szeg\H{o} projection on $\Omega_{m,k}$ is $L^p$ bounded if and only if $p\in\left(\frac{2m+4}{m+4},\frac{2m+4}{m}\right)\cap\left(\frac{2k+4}{k+4},\frac{2k+4}{k}\right)$.
For $m=k=1$, the Szeg\H{o} projection is $L^p$ bounded if $p\in(1,\infty)$.
\end{theorem}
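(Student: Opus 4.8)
The plan is to combine the admissibility already furnished by Proposition~\ref{reintriallow} with the transfer principle of Theorem~\ref{main1}, reducing the statement to a weighted norm inequality for the Szeg\H{o} projection $S_{\B^2}$ on $\Sp^3$ with weight $v:=w^{1-p/2}$, where $w$ is the explicit density in \eqref{reinJ}, restricted to $G$-invariant functions. A preparatory step is to pin down the boundary behavior of $w$: factoring \eqref{reinJ} as $w=mk|z_1|^m|z_2|^k\bigl(m^2|z_1|^{2m-4}+k^2|z_2|^{2k-4}\bigr)^{1/2}$ shows the last factor is bounded above and below by positive constants on $\Sp^3$ whenever the relevant exponent is $\ge2$, so that, consistently with \eqref{sizem}--\eqref{sizek}, $w\approx|z_1|^m$ near the circle $\{z_1=0\}$ (for $m\ge2$), $w\approx|z_2|^k$ near $\{z_2=0\}$ (for $k\ge2$), and $w\approx1$ along an edge of exponent $1$.

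For sufficiency I would verify that $v\in A_p(\Sp^3)$ in the sense of Definition~\ref{Apsphere} for $p$ in the asserted open interval, and then invoke Theorem~\ref{weightLpsphere} (whose conclusion is for all of $L^p(\Sp^3,v)$, hence a fortiori on the $G$-invariant subspace). By the engulfing and doubling properties of Proposition~\ref{doublemeasure} it is enough to bound the $A_p$ quotient on metric balls $Q_\delta(\eta)$. Balls staying away from both singular circles carry a weight comparable to a constant and contribute $\approx1$; since the circles $\{z_1=0\}$ and $\{z_2=0\}$ sit at fixed mutual distance $1$, a small ball cannot approach both, so the only issue is balls centered near one circle. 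Near $\{z_1=0\}$ one has $v\approx(1-|z_2|^2)^{\alpha}$ with $\alpha=\tfrac{m(2-p)}{4}$; Forelli's formula (Theorem~\ref{Forelli}) collapses the two integrals $\tfrac1{\sigma(Q)}\int_Q v$ and $\bigl(\tfrac1{\sigma(Q)}\int_Q v^{-1/(p-1)}\bigr)^{p-1}$ onto the disc in the $z_2$-variable, and Lemma~\ref{intsize} gives each factor as $\approx\delta^{\pm}$ with product $\approx\delta^0=1$, provided both converge, i.e. $\alpha>-1$ and $\alpha<p-1$. These are precisely $p<\tfrac{2m+4}{m}$ and $p>\tfrac{2m+4}{m+4}$; the circle $\{z_2=0\}$ yields the analogous conditions with $k$, and requiring $A_p$ at both forces the intersection of the two intervals.

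For necessity I would exhibit explicit $G$-invariant test functions forcing the weighted bound to fail at and beyond the endpoints. Because $S_\Omega$ is self-adjoint on $L^2(\Gamma_\Omega)$ and the endpoints $\tfrac{2m+4}{m+4},\tfrac{2m+4}{m}$ (resp. for $k$) are conjugate exponents, the bounded range is symmetric under $p\mapsto p/(p-1)$, so it suffices to treat the upper end, say $p\ge p^\ast:=\min\{\tfrac{2m+4}{m},\tfrac{2k+4}{k}\}$; assuming $k\le m$ gives $p^\ast=\tfrac{2m+4}{m}$. For such $p$ the weight satisfies $\int_{\Sp^3}v\,d\sigma=\infty$, since near $\{z_1=0\}$ one has $v\gtrsim(1-|z_2|^2)^{-1}$, whose integral diverges by the $\alpha=-1$ case of Lemma~\ref{intsize} through Forelli's formula. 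Taking the $G$-invariant function $h=|z_1z_2|^{2N}=z^{(N,N)}\overline{z^{(N,N)}}$, orthogonality of monomials gives $S_{\B^2}(h)=\int_{\Sp^3}|z_1z_2|^{2N}\,d\sigma=:c_N>0$, a nonzero constant, whence $\|S_{\B^2}(h)\|_{L^p(v)}^p=c_N^{\,p}\int_{\Sp^3}v\,d\sigma=\infty$; on the other hand, choosing $N$ large enough that $2Np+\tfrac{m(2-p)}{2}>-2$ and $2Np+\tfrac{k(2-p)}{2}>-2$ makes $\int_{\Sp^3}|z_1z_2|^{2Np}\,v\,d\sigma<\infty$, so $h\in L^p(v)$. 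This quotient argument yields unboundedness for every $p\ge p^\ast$, and the self-adjointness of $S_\Omega$ delivers the lower end by duality; the case $m=k=1$ is $\B^2$ itself, where $w\approx1$ and the classical result applies.

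The expected main obstacle is the $A_p$ verification. The delicate points are extracting the two edge exponents from the explicit density \eqref{reinJ}, reducing the non-isotropic ball estimates to the one-variable integrals $I(\alpha,\delta)$ via Forelli's formula, and confirming that the borderline divergences of $I(-1,\delta)$ pin the endpoints exactly, so that the $A_p$ interval matches the claimed range and dovetails with the test-function failure at its boundary. A minor subtlety worth flagging is that the simplest candidate $h=|z_1|^2$ fails to lie in $L^{p^\ast}(v)$ in the symmetric regime $k=m$, which is why the symmetric choice $h=|z_1z_2|^{2N}$ is used throughout.
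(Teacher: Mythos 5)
Your proposal is correct and follows essentially the same route as the paper: reduce via Theorem~\ref{main1} and Proposition~\ref{reintriallow} to a weighted $A_p$ condition on $\Sp^3$ checked through Forelli's formula and Lemma~\ref{intsize} (with Theorem~\ref{weightLpsphere} and Proposition~\ref{doublemeasure}), then kill the endpoints with a $G$-invariant test function whose Szeg\H{o} projection is a nonzero constant while $\int_{\Sp^3} w^{1-p/2}\,d\sigma=\infty$, and finish by duality. The only material difference is your test function $|z_1z_2|^{2N}$ in place of the paper's $|z_1^mz_2^k|$; both work for the same reason (rotational symmetry forces the projection to be constant).
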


\begin{proof}
By Theorem \ref{main1}, the Szeg\H{o} projection $S_{\Omega_{m,k}}$ is bounded on $L^p(\partial\Omega_{m,k})$ if and only if $S_{\B^2}$ is bounded on $G$-invariant functions in $L^p(\Sp^3,w^{1-\frac{p}{2}})$.

It suffices to prove the case $m\ge2$ or $k\ge2$, since the case $m=k=1$ corresponds to the ball. 
To obtain the boundedness, by Theorem \ref{weightLpsphere}, it suffices to check $w^{1-\frac{p}{2}}\in A_p(\Sp^3)$. Note that the zeros of $w(z)$ on $\Sp^3$ are the two circles 
\[
Z_1=\{|z_1|=1,\,\,\,z_2=0\}\qquad\text{and}\qquad Z_2=\{|z_2|=1,\,\,\,z_1=0\}.
\]
Define
\[
L:=\inf\{d(z,\zeta):\,\,z\in Z_1,\zeta\in Z_2\}>0
\]
to be the distance between $Z_1$ and $Z_2$, where $d$ is the metric in Definition \ref{metricball}.
Let $Q=Q_{\delta}(\eta)$, and
let
\[
L_1:=\inf\{d(\eta,\zeta):\,\,\zeta\in Z_1\}\qquad\text{and}\qquad L_2:=\inf\{d(\eta,\zeta):\,\,\zeta\in Z_2\}
\]
be the distance from $\eta$ to $Z_1$ and $Z_2$, respectively. Note $L_1\leq \sqrt{2}, L_2 \le\sqrt{2}$, since any distance on $\Sp^3$ is not greater than $\sqrt{2}$ by the definition of the metric $d$.  We divide the argument into three cases.

Case one: $L_1\ge2\delta$ and $L_2\ge2\delta$. By triangle inequality, for any $z\in Q$ the distance from $z$ to $Z_1$ is at least $L_1-\delta\ge \frac{L_1}{2}$. That is, $d(z,\zeta)\ge \frac{L_1}{2}$ for all $\zeta\in Z_1$, which implies
\[
|e^{i\theta}-z_1|\ge \left(\frac{L_1}{2}\right)^2
\]
for all $\theta\in[0,2\pi]$. So $|z_1|\le1-\frac{L_1^2}{4}$, and hence $|z_2|>\frac{L_1}{3}$. Without loss of generality, assume $d(\eta,(1,0))=L_1$, then for any $z\in Q$ the distance from $z$ to $(1,0)$ is at most $L_1+\delta\le \frac{3L_1}{2}$. So
\[
|1-z_1|\le\left(\frac{3L_1}{2}\right)^2,
\]
which implies $|z_1|>1-\frac{9}{4} L_1^2$. Hence, $|z_2|<3L_1$. Similar argument shows
\[
\frac{1}{3}L_2<|z_1|<3L_2
\]
by the relation between $z$ and $Z_2$. By the estimates \eqref{sizem} and \eqref{sizek}, $w^{1-\frac{p}{2}}\approx L_1^{k(1-\frac{p}{2})}$ as $|z_2|\to0^+$ and $w^{1-\frac{p}{2}}\approx L_2^{m(1-\frac{p}{2})}$ as $|z_1|\to0^+$ (or bounded above and below when $k=1$ or $m=1$, respectively). As a result $[w^{1-\frac{p}{2}}]_{p,Q}<\infty$.

Case two: $L_1<2\delta$ or $L_2<2\delta$, but $\delta\ge\delta_0$ for some small fixed constant $0<\delta_0<<L$. By Proposition \ref{doublemeasure}, $\sigma(Q)\ge C_0\delta_0^4$ for some $C_0>0$. Since $Q\subset\Sp^3$,
\[
\int_Qw^{1-\frac{p}{2}}d\sigma\le\int_{\Sp^3}w^{1-\frac{p}{2}}d\sigma.
\]
As in Proposition \ref{reintriallow}, $w^{1-\frac{p}{2}}$ is integrable away from its zeros. By \eqref{sizem} and \eqref{sizek} and Theorem \ref{Forelli},
\[
\int_{\Sp^3}\chi_{\{|z_1|<\e\}}\cdot w^{1-\frac{p}{2}}d\sigma\approx\int_{\{|z_1|<\e\}}|z_1|^{m(1-\frac{p}{2})}dA(z_1)= C_1(m,p,\e)<\infty
\]
provided $m(1-\frac{p}{2})+2>0$ when $m\ge2$ (when $m=1$ it is trivial and no restriction on $p$); and
\[
\int_{\Sp^3}\chi_{\{|z_2|<\e\}}\cdot w^{1-\frac{p}{2}}d\sigma\approx\int_{\{|z_2|<\e\}}|z_2|^{k(1-\frac{p}{2})}dA(z_2)= C_1(k,p,\e)<\infty
\]
provided $k(1-\frac{p}{2})+2>0$ when $k\ge2$ (when $k=1$ it is trivial and no restriction on $p$). A similar argument applied to $w^{(1-\frac{p}{2})\frac{-1}{p-1}}$ shows
\[
\int_Qw^{(1-\frac{p}{2})\frac{-1}{p-1}}d\sigma\le\int_{\Sp^3}w^{(1-\frac{p}{2})\frac{-1}{p-1}}d\sigma\le C_2(m,k,p,\e)<\infty
\]
provided $m(1-\frac{p}{2})\frac{-1}{p-1}+2>0$ when $m\ge2$ (when $m=1$ no restriction on $p$) and $k(1-\frac{p}{2})\frac{-1}{p-1}+2>0$ when $k\ge2$ (when $k=1$ no restriction on $p$). Solving the inequalities, we have
\[
[w^{1-\frac{p}{2}}]_{p,Q}\le C(m,k,p,\delta_0)<\infty
\]
for $p\in\left(\frac{2m+4}{m+4},\frac{2m+4}{m}\right)\cap\left(\frac{2k+4}{k+4},\frac{2k+4}{k}\right)$ when $m\ge2$ or $k\ge2$.

Case three: $L_1<2\delta$ or $L_2<2\delta$ and $\delta<\delta_0<< L$. For $L_1<2\delta$, $L_2>>\delta_0>\delta$, the only zeros of $w$ will be in $Z_1$. Without loss of generality, assume $d(\eta,(1,0))=L_1<2\delta$,  then
\[
Q_{\delta}(1,0)\cap Q_{\delta}(\eta)\neq\emptyset.
\]
By Proposition \ref{doublemeasure}, $Q\subset Q_{c_1\delta}(1,0)$ for some $c_1>1$ and $\sigma(Q)\approx\delta^4$. When $k\ge2$, by the estimate in \eqref{sizek}, Theorem \ref{Forelli}, and Lemma \ref{intsize} as $\delta<\delta_0$ sufficiently small,

\begin{align*}
\frac{1}{\sigma(Q)}\int_{Q}w^{1-\frac{p}{2}} d\sigma
&\le\frac{1}{\sigma(Q)}\int_{Q_{c_1\delta}(1,0)}|z_2|^{k(1-\frac{p}{2})} d\sigma\\
&\approx\frac{1}{\delta^4}\int_{|1-z_1|^{\frac{1}{2}}<c_1\delta}|z_2|^{k(1-\frac{p}{2})}d\sigma\\
&=\frac{1}{\delta^4}\int_{\{|1-z_1|<(c_1\delta)^2\} \cap \{|z_1|<1\}}(1-|z_1|^2)^{\frac{k}{2}(1-\frac{p}{2})}dA(z_1)\\
&=\frac{1}{\delta^4}\cdot I\left(\frac{k}{2}(1-\frac{p}{2}),c_1\delta\right)\\
&\approx\frac{1}{\delta^4}\cdot\delta^{k(1-\frac{p}{2})+4}=\delta^{k(1-\frac{p}{2})}
\end{align*}
provided $\frac{k}{2}(1-\frac{p}{2})>-1$ and

\begin{align*}
\left(\frac{1}{\sigma(Q)}\int_{Q}w^{(1-\frac{p}{2})\frac{-1}{p-1}} d\sigma\right)^{p-1}
&\le\left(\frac{1}{\sigma(Q)}\int_{Q_{c_1\delta}(1,0)}|z_2|^{-\frac{k}{p-1}(1-\frac{p}{2})}d\sigma\right)^{p-1}\\
&\approx\left(\frac{1}{\delta^4}\int_{|1-z_1|^{\frac{1}{2}}<c_1\delta}|z_2|^{-k(1-\frac{p}{2})/(p-1)}d\sigma\right)^{p-1}\\
&=\left(\frac{1}{\delta^4}\int_{\{|1-z_1|<(c_1\delta)^2\}\cap\{|z_1|<1\}}(1-|z_1|^2)^{\frac{k}{2-2p}(1-\frac{p}{2})}dA(z_1)\right)^{p-1}\\
&=\left(\frac{1}{\delta^4}\cdot I\left(\frac{k}{2-2p}(1-\frac{p}{2}),c_1\delta\right)\right)^{p-1}\\
&\approx\left(\frac{1}{\delta^4}\cdot\delta^{\frac{k}{1-p}(1-\frac{p}{2})+4}\right)^{p-1}=\delta^{-k(1-\frac{p}{2})}
\end{align*}
provided $\frac{k}{2-2p}(1-\frac{p}{2})>-1$. Solving the inequalities, we have
\[
[w^{1-\frac{p}{2}}]_{p,Q}\le C(k,p,\delta_0) <\infty
\]
for $p\in\left(\frac{2k+4}{k+4},\frac{2k+4}{k}\right)$, when $k\ge2$ (when $k=1$ no restriction on $p$). Similarly, for $L_2<2\delta$
\[
[w^{1-\frac{p}{2}}]_{p,Q}\le C(m,p,\delta_0) <\infty
\]
for $p\in\left(\frac{2m+4}{m+4},\frac{2m+4}{m}\right)$, when $m\ge2$ (when $m=1$ no restriction on $p$).  This verifies the boundedness.

To check the unboundedness, consider the $G$-invariant test function
\[
h(z_1,z_2)=|z_1^mz_2^k|
\]
on $\Sp^3$. By rotational symmetry, a direct computation shows
\[
S_{\B^2}(h)=C_{m,k}
\]
and by the estimate in \eqref{sizek} and Theorem \ref{Forelli}
\[
\int_{\Sp^3}w(z)^{1-\frac{1}{2}\cdot\frac{2k+4}{k}}d\sigma(z)>\int_{|z_2|<\e}|z_2|^{-2}dA(z_2)=\infty.
\]
On the other hand,
\[
\int_{\Sp^3}|h|^{\frac{2k+4}{k}}w(z)^{1-\frac{1}{2}\cdot\frac{2k+4}{k}}d\sigma(z)\le C\int_{\Sp^3}|z_1|^{a}|z_2|^{b}d\sigma(z)<\infty,
\]
for some $a, b\ge0$. This shows the unboundedness for $p=\frac{2k+4}{k}$. The case for $p=\frac{2m+4}{m}$ is similar. By duality, this completes the proof.

\end{proof}

\begin{corollary}
The Szeg\H{o} projection on the classical Reinhardt triangle $\Omega_{2,2}$ is $L^p$ bounded if and only if $\frac{4}{3}<p<4$.
\end{corollary}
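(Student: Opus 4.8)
The plan is to recognize that the classical Reinhardt triangle $\Omega_{2,2}$ is literally the generalized Thullen domain $\Omega_{m,k}$ with $m=k=2$, as recorded in the defining Definition, and then to invoke Theorem \ref{LpReinTri} directly. Since $m=k=2\ge 2$, the hypothesis of that theorem is satisfied, so the $L^p$ boundedness of the Szeg\H{o} projection on $\Omega_{2,2}$ holds precisely for
\[
p\in\left(\frac{2m+4}{m+4},\frac{2m+4}{m}\right)\cap\left(\frac{2k+4}{k+4},\frac{2k+4}{k}\right)
\]
evaluated at $m=k=2$. Thus the entire content of the corollary reduces to an arithmetic evaluation of these endpoints.

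The second step is to carry out that evaluation. For $m=2$ one computes the left endpoint $\frac{2m+4}{m+4}=\frac{8}{6}=\frac{4}{3}$ and the right endpoint $\frac{2m+4}{m}=\frac{8}{2}=4$, so the first interval is $\left(\frac{4}{3},4\right)$. Because $k=2$ as well, the second interval is identical, namely $\left(\frac{4}{3},4\right)$. Intersecting the two identical intervals yields $\left(\frac{4}{3},4\right)$, which is exactly the claimed range. No endpoint analysis beyond what is already subsumed in Theorem \ref{LpReinTri} is needed, since the ``if and only if'' there already accounts for the failure of boundedness at $p=\frac{2m+4}{m}=4$ and at the dual endpoint $p=\frac{2m+4}{m+4}=\frac{4}{3}$.

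There is essentially no obstacle in this argument: the corollary is a pure specialization of the preceding theorem, and the only work is confirming that the two defining intervals collapse to a single interval when $m=k$. The one point worth stating explicitly, to make the deduction airtight, is that $\Omega_{2,2}=\{(\zeta_1,\zeta_2)\in\C^2:|\zeta_1|+|\zeta_2|<1\}$ is indeed the $m=k=2$ instance of $\Omega_{m,k}=\{|\zeta_1|^{2/m}+|\zeta_2|^{2/k}<1\}$, which is immediate since $\frac{2}{m}=\frac{2}{k}=1$. Everything else is the substitution above, so the proof is a short two-line consequence of Theorem \ref{LpReinTri}.
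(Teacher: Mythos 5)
Your proposal is correct and is exactly what the paper intends: the corollary is stated with no separate proof precisely because it is the $m=k=2$ specialization of Theorem \ref{LpReinTri}, and your arithmetic $\frac{2m+4}{m+4}=\frac{4}{3}$, $\frac{2m+4}{m}=4$ is right. Nothing further is needed.
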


\subsection{Minimal ball}

The result for the classical Reinhardt triangle $\Omega_{2,2}$ can be applied to the following domain.
\begin{definition}
The domain defined by
\[
\B_*=\{(\xi_1,\xi_2)\in\C^2: |\xi_1|^2+|\xi_2|^2+|\xi_1^2+\xi_2^2|<1\}
\]
is called the minimal ball in $\C^2$.
\end{definition}

Let
\[
F(\zeta_1,\zeta_2)=\left(\frac{\zeta_1+\zeta_2}{\sqrt{2}},\frac{i\zeta_1-i\zeta_2}{\sqrt{2}}\right)
\]
be a linear map on $\C^2$. Then $F$ indeed maps $\Omega_{2,2}$ biholomorphically onto $\B_*$. 

A direct computation shows
\[
J_{\C}F(\zeta)=\left(\begin{array}{cc} \frac{1}{\sqrt{2}} & \frac{i}{\sqrt{2}} \\ \frac{1}{\sqrt{2}} & \frac{-i}{\sqrt{2}} \end{array}\right).
\]
Note that
\[
\left(\begin{array}{cc} \frac{1}{\sqrt{2}} & \frac{i}{\sqrt{2}} \\ \frac{1}{\sqrt{2}} & \frac{-i}{\sqrt{2}} \end{array}\right)\overline{\left(\begin{array}{cc} \frac{1}{\sqrt{2}} & \frac{i}{\sqrt{2}} \\ \frac{1}{\sqrt{2}} & \frac{-i}{\sqrt{2}} \end{array}\right)^t}=\left(\begin{array}{cc} \frac{1}{\sqrt{2}} & \frac{i}{\sqrt{2}} \\ \frac{1}{\sqrt{2}} & \frac{-i}{\sqrt{2}} \end{array}\right)\left(\begin{array}{cc} \frac{1}{\sqrt{2}} & \frac{1}{\sqrt{2}} \\ \frac{-i}{\sqrt{2}} & \frac{i}{\sqrt{2}} \end{array}\right)=\left(\begin{array}{cc} 1  \\  & 1 \end{array}\right).
\]
By \eqref{matrixA} and Proposition \ref{den}, the pullback density function for $\B_*$ is exactly the same as the pullback density function for $\Omega_{2,2}$. Hence the $L^p$ regularity of $S_{\B_*}$ is the same as that of $S_{\Omega_{2,2}}$.

\begin{corollary}
The Szeg\H{o} projection on the minimal ball $\B_*$ is bounded on $L^p(\partial\B_*)$ if and only if $\frac{4}{3}<p<4$.
\end{corollary}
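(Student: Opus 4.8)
The plan is to transfer the result for the classical Reinhardt triangle $\Omega_{2,2}$, i.e.\ the case $m=k=2$ of Theorem \ref{LpReinTri}, to $\B_*$ through the linear biholomorphism $F$, exploiting the fact that $F$ has a constant \emph{unitary} complex Jacobian. First I would observe that, since $\Phi(z)=(z_1^2,z_2^2)$ realizes $\Omega_{2,2}$ as a quotient domain of $\B^2$ with deck group $G$, the composition $F\circ\Phi:\B^2\to\B_*$ is again a proper holomorphic map with the same group $G$; as $F$ extends linearly (hence smoothly) to $\overline{\B^2}$, the quadruple $(\B^2,\B_*,F\circ\Phi,G)$ is of the type studied in \S\ref{generalsetup}, with $\Gamma_{\B_*}=(F\circ\Phi)(\Sp^3)=\partial\B_*$.

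The key point, already recorded above, is that $J_{\C}F\cdot\overline{(J_{\C}F)^t}=I$. Combined with the chain rule $J_{\C}(F\circ\Phi)=J_{\C}\Phi\cdot J_{\C}F$, the matrix $A$ from \eqref{matrixA} attached to $F\circ\Phi$ factors as $A=(J_{\R}Z\cdot J_{\C}\Phi)\cdot J_{\C}F$, so that $A\overline{A^t}=(J_{\R}Z\,J_{\C}\Phi)\,J_{\C}F\,\overline{(J_{\C}F)^t}\,\overline{(J_{\R}Z\,J_{\C}\Phi)^t}=(J_{\R}Z\,J_{\C}\Phi)\overline{(J_{\R}Z\,J_{\C}\Phi)^t}$, which is exactly the matrix associated to $\Phi$ alone. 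Taking real parts and applying Proposition \ref{den}, the pullback density function $w$ on $\Sp^3$ for $\B_*$ coincides with the density \eqref{reinJ} for $\Omega_{2,2}$ at $m=k=2$. In particular $w$ is admissible by Proposition \ref{reintriallow} (the same $g$ works, since it is the very same function), so $(\B^2,\B_*,F\circ\Phi,G)$ is an admissible quadruple and the Hardy space and Szeg\H{o} projection of $\B_*$ are defined.

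I would then invoke Theorem \ref{main1}: the $L^p(\partial\B_*)$ boundedness of $S_{\B_*}$ is equivalent to the boundedness of $S_{\B^2}$ on the $G$-invariant functions in $L^p(\Sp^3,w^{1-\frac{p}{2}})$. Because $w$ is identical to the density governing $\Omega_{2,2}$, this is verbatim the same weighted problem that characterizes the $L^p$ boundedness of $S_{\Omega_{2,2}}$; hence $S_{\B_*}$ and $S_{\Omega_{2,2}}$ share the same range of boundedness, and the corollary for $\Omega_{2,2}$ yields $\tfrac{4}{3}<p<4$.

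I do not expect a genuine obstacle: the entire content reduces to the equality of the two density functions, which is forced by the unitarity of the constant matrix $J_{\C}F$. The only items needing a word of care are confirming that $F(\Omega_{2,2})=\B_*$ (a short verification that $|\xi_1|^2+|\xi_2|^2+|\xi_1^2+\xi_2^2|=(|\zeta_1|+|\zeta_2|)^2$ under $F$, already asserted above) and that admissibility is inherited, which it is since the density is unchanged. Everything else is an application of Theorem \ref{main1} and the previously established $\Omega_{2,2}$ result.
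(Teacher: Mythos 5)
Your proposal is correct and follows essentially the same route as the paper: both arguments reduce everything to the observation that $J_{\C}F$ is a constant unitary matrix, so the pullback density for $F\circ\Phi$ computed via \eqref{matrixA} and Proposition \ref{den} coincides with that of $\Omega_{2,2}$, and the $L^p$ range then transfers through Theorem \ref{main1}. Your write-up is in fact slightly more explicit than the paper's (spelling out the chain-rule factorization of $A$ and the inheritance of admissibility), but there is no substantive difference.
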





{\bf Acknowledgement} The authors would like to thank Brett Wick for helpful communications. They also thank Xiaojun Huang for pointing out a mistake in the earlier version of the paper.

\bigskip


\bibliographystyle{plain}

\end{document}